\documentclass[a4paper,10pt]{amsart}
\usepackage[utf8]{inputenc}
\usepackage{amsthm}
\usepackage{comment}
\usepackage{amsmath}
\usepackage{bm}
\usepackage{amsfonts}
\usepackage{amssymb}
\usepackage{mathtools}
\mathtoolsset{showonlyrefs}
\usepackage{appendix}
\usepackage{mathrsfs}
\usepackage{setspace}
\usepackage{thmtools} 
\usepackage[foot]{amsaddr}
\usepackage[obeyFinal,textwidth=2.7cm]{todonotes}
\usepackage[pdfdisplaydoctitle,colorlinks,breaklinks,urlcolor=blue,linkcolor=blue,citecolor=blue]{hyperref} 

\newcommand{\A}{\mathbb{A}}

\newcommand{\EE}{\mathbb{E}}

\renewcommand{\H}{\mathcal{H}}

\newcommand{\I}{\mathcal{I}}

\newcommand{\N}{\mathbb{N}}

\newcommand{\R}{\mathbb{R}}

\newcommand{\T}{\mathbb{T}}

\newcommand{\W}{\mathbf{W}}

\newcommand{\sx}{\left}
\newcommand{\dx}{\right}

\newcommand{\var}{\textup{-var}}
\newcommand{\uno}{\mathbf{1}}

\DeclareMathOperator{\dvg}{\text{div}}

\newcommand{\st}{_{s,t}}
\newcommand{\Y}{\mathbf{Y}}

\newcommand{\one}{\bm{1}}

\newcommand{\norm}[1]{\left\|#1\right\|}

\newcommand{\expt}[2][]{\mathbb{E}_{#1}\left[#2\right]}

\newtheorem{theorem}{Theorem}[section]
\newtheorem{definition}[theorem]{Definition}

\newtheorem{corollary}[theorem]{Corollary}
\newtheorem{lemma}[theorem]{Lemma}
\newtheorem{proposition}[theorem]{Proposition}

\theoremstyle{remark}
\newtheorem{remark}[theorem]{Remark}

\numberwithin{equation}{section}
\allowdisplaybreaks

\newenvironment{acknowledgements}{%
  \begin{abstract}
}{%
  \end{abstract}
}

\title[Fast-Slow Navier-Stokes System Driven by fBM]{Averaging Dynamics and Wong-Zakai approximations for a Fast-Slow Navier-Stokes System Driven by fractional Brownian Motion}
\author[E. Luongo]{Eliseo Luongo}
\address{Fakultät für Mathematik, Universität Bielefeld, 33501 Bielefeld, Germany}
\email{\href{mailto:eluongo at math.uni-bielefeld.de}{eluongo at math.uni-bielefeld.de}}
\author[F. Triggiano]{Francesco Triggiano}
\address{Scuola Normale Superiore, Piazza dei Cavalieri, 7, 56126 Pisa, Italia}
\email{\href{mailto:francesco.triggiano at sns.it}{francesco.triggiano at sns.it}}
\keywords{Navier-Stokes Equations, Transport Noise, It\^o–Stokes Drift, Gaussian Rough Paths, Fractional Brownian Motion}
\subjclass{35Q30, 60L50 60F99 (60L90) }
\date\today

\begin{document}

\begin{abstract}  
We study a slow-fast system of coupled two- and three-dimensional Navier-Stokes equations in which the fast component is perturbed by an additive fractional Brownian noise with Hurst parameter $\H>\frac{1}{3}$. The system is analyzed using rough path theory, and the limiting behaviour strongly depends on the value of $\H$. We prove convergence in law of the slow component to a Navier–Stokes system with an additional It\^o-Stokes drift when $\H<\frac{1}{2}$. In contrast, for $\H\in (\frac{1}{2},1)$, the limit equation features only a transport noise driven by a rough path.
\end{abstract}

\maketitle

\section{Introduction}\label{sec:introduction}
In recent years, slow-fast systems associated with fluid equations have attracted considerable attention; see, for instance, \cite{flandoli20212d, flandoli2022additive, debussche2024second,debussche2023rough}. The paradigmatic example to have in mind is provided by the stochastic Navier–Stokes equations on the three-dimensional torus, given by
\begin{align}\label{intro_syst_h12}
    \begin{cases}
    du^{\epsilon}=\nu \Delta u^{\epsilon}dt-(u^{\epsilon}+v^{\epsilon})\cdot\nabla u^{\epsilon}dt+\nabla p^{\epsilon}\,dt,\\
    dv^{\epsilon}=\left(\nu\Delta v^{\epsilon}+\epsilon^{-1}Cv^{\epsilon}-(u^{\epsilon}+v^{\epsilon})\cdot\nabla v^{\epsilon}+\nabla q^{\epsilon}\right)\,dt+\epsilon^{-1}dW_t,\\
    \nabla\cdot u^{\epsilon}=0,\quad \nabla\cdot v^{\epsilon}=0,
\end{cases}  
\end{align}
where $W$ is an infinite-dimensional Brownian motion satisfying suitable spatial regularity assumptions, $\epsilon\ll 1$ is a small parameter governing the separation of scales between the \emph{large} scales $u^{\epsilon}$ and the \emph{small} scales $v^{\epsilon}$, and $C$ is a suitable damping operator. For simplicity, we restrict this heuristic discussion to the case $C=-I$. In the aforementioned works it was shown that, as $\epsilon\to 0$, the large-scale component $u^{\epsilon}$ converges in distribution to a process $u$, which is a weak solution to the following stochastic Navier–Stokes equations:
\begin{align}\label{intro_transport_h12}
    \begin{cases}
    du=\left(\nu \Delta u-u\cdot\nabla u-\overline{r}\cdot\nabla u\right) dt+d\nabla p-\circ dW\cdot\nabla u,\\
    \nabla\cdot u=0,
\end{cases}  
\end{align}
where $\overline{r}$ is a deterministic vector field, referred to in the literature as the It\^o-Stokes drift (cf. \cite{memin2014fluid}, \cite{debussche2023consistent}), and is related to the spatial covariance structure of the noise $W$. Moreover, $p$ is a semimartingale, often called the turbulent pressure, and the stochastic integral is understood in the Stratonovich sense. 

The interest in results of this type is twofold. On the one hand, they are relevant for stochastic model reduction techniques in climate dynamics; see, for example, \cite{majda2001mathematical, franzke2005low, franzke2006low}. On the other hand, they provide a rigorous justification for the emergence of transport-type noise in fluid equations such as \eqref{intro_transport_h12}. Alternative approaches to those developed in \cite{debussche2024second} can be found in \cite{holm2015variational, memin2014fluid}. Over the last decade, starting from \cite{flandoli2010well}, models like \eqref{intro_transport_h12} have received increasing attention due to their regularizing effects (cf. \cite{maurelli_et_al, flandoli2021high, coghi2023existence} and the references therein), as well as their ability to capture key features of turbulent flows, including enhanced and anomalous dissipation phenomena (see for istance \cite{MR1905858, flandoli2024quantitative, luo2024elementary, rowan2024anomalous, drivas2025anomalous}) and dynamo effects (cf. \cite{butori2024mean, butori2024background}).

From a modeling perspective, however, it is important to account for non-Markovian noise with memory effects when describing turbulent fluids; see, for instance, \cite{apolinario2022dynamical, apolinario2023linear, franzke2015stochastic, faranda2014modelling, lilly2017fractional}. This has led to a growing interest in the analysis of equations of the form \eqref{intro_transport_h12}, where the Brownian motion $W$ is replaced by a fractional Brownian motion, as in \cite{hofmanova2019navier, hofmanova2021rough, crisan2022solution, roveri2024well, galeati2026well, flandoli2023reduced, cifani2025diffusion}. Nevertheless, as pointed out in \cite{flandoli20212d, flandoli2022additive, debussche2024second, debussche2023rough}, such limit equations should be regarded as idealized models arising from the slow-fast system \eqref{intro_syst_h12}. A rigorous analysis of the original system when the driving noise $W$ is replaced by a fractional Brownian motion $W^{\H}$ with Hurst parameter $\H$ is still missing. Indeed, all the aforementioned works crucially rely on the Markovian nature of $W$ and on the availability of It\^o-type formulas for the system \eqref{intro_syst_h12}.

Furthermore, although a substantial body of literature on slow–fast systems driven by fractional noise has recently emerged (see, for instance, \cite{li2022slow, gehringer2022functional, pei2024almost, pei2023almost, li2026navierstokesfractionaltransportnoise} and the survey \cite{gehringer2019rough}), to the best of our knowledge none of these works addresses the specific issues discussed above. The present article initiates the study of this gap, see \autoref{main_thm} below for details.

Before presenting our main contributions, we briefly discuss the origin of the terms appearing in \eqref{intro_syst_h12}, as their justification plays a central role in our framework. We start from the incompressible Navier–Stokes equations
\begin{align*}
    \begin{cases}
        \partial_t \overline{u}&=\nu\Delta \overline{u}-\overline{u}\cdot\nabla \overline{u}+\nabla \overline{p},\\
        \nabla\cdot \overline{u}&=0.
    \end{cases}
\end{align*}
We decompose the initial condition into large and small scales according to a suitable scale-separation rule,
\begin{align*}
    \overline{u}(0)=u^{\epsilon}(0)+v^{\epsilon}(0),
\end{align*}
where $\epsilon>0$ is a parameter characterizing the separation of scales. Assuming that this separation persists for positive times, we introduce two velocity fields $u^{\epsilon}(t)$ and $v^{\epsilon}(t)$, describing the evolution of the large- and small-scale components, respectively. Their dynamics are then governed by
\begin{align}\label{system_intermediate_small}
    \begin{cases}
        \partial_t u^{\epsilon}+(u^{\epsilon}+v^{\epsilon})\cdot\nabla u^{\epsilon}+\nabla p^{\epsilon}&=0,\\
        \partial_t v^{\epsilon}+(u^{\epsilon}+v^{\epsilon})\cdot\nabla v^{\epsilon}+\nabla q^{\epsilon}&=0,\\
        \nabla\cdot u=\nabla\cdot v=0.
    \end{cases}
\end{align}
At this stage, system \eqref{system_intermediate_small} does not include either noise or damping at the level of the small scales. The introduction of an additive noise term in the small-scale equation is motivated by instabilities induced by boundary irregularities or internal obstacles, which are typically not explicitly modeled in mathematical formulations based on toroidal geometries or smooth bounded domains. We refer to \cite[Chapter~5]{flandoli2023stochastic} for a detailed discussion. However, the presence of additive noise alone leads, via It\^o’s formula, to an \emph{infinite} mean energy injection into the system as $\epsilon\to 0$. To counterbalance this effect, an additional damping operator must be introduced in the small-scale equation. This modification yields precisely the structure of \eqref{intro_syst_h12}, leaving only the scaling of the noise and damping terms to be justified.

The fact that the noise and damping operators scale in the same way can be motivated in at least two different manners in the literature for systems driven by standard Brownian motion. The first argument, inspired by Wong-Zakai-type approximations, is that this scaling leads the small-scale dynamics to approximate white noise. More precisely, consider the process
\begin{equation*}
dw^{\epsilon}_t = -\epsilon^{-1} w^{\epsilon}_t\,dt + dW_t.    
\end{equation*}
It, formally, satisfies
\begin{align}\label{heuristic_convergence_noise}
    w^{\epsilon}_t = \epsilon^{-1} \int_0^t e^{-\epsilon^{-1}(t-s)}\, dW_s \rightarrow dW_t
\end{align}
because, upon integrating the equation for $w^{\epsilon}$ in time, we obtain
\begin{align*}
    W_t
    = \epsilon\, w^{\epsilon}_t + \int_0^t w^{\epsilon}_s\, ds
    \approx \int_0^t w^{\epsilon}_s\, ds.
\end{align*}
This heuristic can be made rigorous using, for instance, rough path techniques; see \cite{friz2014convergence}. Moreover, this argument is robust with respect to the choice of the Hurst parameter and therefore applies equally when $W$ is replaced by a fractional Brownian motion with Hurst parameter $\H$.

A second motivation is provided by multiscale analysis; see \cite[Section~2]{flandoli20212d} for the case $\H=\tfrac12$, and \autoref{sec:appendix_multiscale} for its extension to the general case considered here. In contrast to the Wong–Zakai argument, this approach crucially depends on the self-similarity properties of fractional Brownian motion, which vary with the Hurst parameter. As a consequence, from this perspective the scaling $\epsilon^{-1}$ in \eqref{intro_syst_h12} is not universal. For general values of $\H$, the multiscale analysis instead suggests the scaling $\epsilon^{-\frac12-\H}$. In particular, the Wong–Zakai and multiscale scalings coincide only in the classical Brownian case $\H=\tfrac12$.

In view of the discussion above, our primary objective is to rigorously analyze the asymptotic behavior, as $\epsilon \to 0$, of the system
\begin{align}\label{e:equation1}
\begin{cases}
     du^{\epsilon}=\nu \Delta u^{\epsilon}dt-(u^{\epsilon}+v^{\epsilon})\cdot\nabla u^{\epsilon}dt+\nabla p^{\epsilon}\,dt,\\
    dv^{\epsilon}=\left(\nu\Delta v^{\epsilon}+\epsilon^{-1}Cv^{\epsilon}-(u^{\epsilon}+v^{\epsilon})\cdot\nabla v^{\epsilon}+\nabla q^{\epsilon}\right)\,dt+\epsilon^{-\alpha}dW^{\H}_t,\\
    \nabla\cdot u^{\epsilon}=0,\quad \nabla\cdot v^{\epsilon}=0,
\end{cases}
\end{align}
focusing on the interaction between the \emph{large}- and \emph{small}-scale components under stochastic fractional perturbations. Here $\nu>0$, $p^{\epsilon}$ and $q^{\epsilon}$ are pressure fields, $u^{\epsilon},v^{\epsilon}\colon \T^d\times[0,T]\to\R^d$ denote the large- and small-scale velocity fields, respectively, and $W^{\H}$ is a divergence-free, $\H$-fractional Wiener process on $[L^2(\T^d)]^d$. The noise is spatially colored, with covariance operator $Q$, and has Hurst parameter $\H\in\left(\tfrac13,1\right)$. The operator $C$ is linear. Precise assumptions on $C$ and $Q$ are given in \autoref{sec_hp_noise}; here we only mention that $C$ and $Q$ are required to commute. Solutions to \eqref{e:equation1} are interpreted in the sense of unbounded rough drivers; see \cite{bailleul2017unbounded} and the recent review \cite{hocquet2025unbounded}. This notion is recalled in \autoref{sec_rough_formulation}.

As discussed above, there are in principle two relevant choices for the parameter $\alpha\in\{1,\tfrac12+\H\}$, corresponding to different noise intensities. For technical reasons (see the discussion following \autoref{main_thm}), for each fixed value of $\H$ we are only able to treat one of these two regimes. Specifically, we work with the choice $\alpha=1\wedge(\tfrac12+\H)$, which yields a Wong–Zakai regime when $\H>\tfrac12$ and a multiscale regime when $\H<\tfrac12$. As already noted, the two regimes coincide in the critical case $\H=\tfrac12$, and this fact has important consequences for the structure of the limiting equation. 

When $\H>\tfrac12$, we prove that $u^{\epsilon}$ converges in law to a weak solution of the Young-type PDE with transport noise
\begin{align}\label{eq_Hgreat12}
    \begin{cases}
    du=\left(\nu \Delta u-u\cdot\nabla u\right) dt+d\nabla p- d(-C)^{-1}W^{\H}\cdot\nabla u,\\
    \nabla\cdot u=0.
    \end{cases}
\end{align}
In this Wong–Zakai regime, $v^{\epsilon}$ converges, as expected, to a fractional white noise. However, in contrast to \cite{debussche2024second, debussche2023rough}, no It\^o–Stokes drift appears in the limit. This absence is not due to isotropy assumptions on the covariance $Q$, as in \cite{flandoli20212d, flandoli2022additive}, but rather to the specific scaling of the noise.

On the other hand, when $\H<\tfrac12$, the noise intensity is too weak to yield convergence to fractional white noise. Nevertheless, the quadratic nonlinearity $v^{\epsilon}\cdot\nabla v^{\epsilon}$ still produces a macroscopic effect in the limit dynamics. In this case, we show that $u^{\epsilon}$ converges in law to a weak solution of the deterministic PDE with an additional transport term,
\begin{align}\label{eq_Hsmall12}
    \begin{cases}
    \partial_t u=\nu \Delta u-\overline{r}\cdot\nabla u-u\cdot\nabla u+\nabla p,\\
    \nabla\cdot u=0,
    \end{cases}
\end{align}
where $\overline{r}$ denotes the It\^o–Stokes drift and depends explicitly on $C$, $Q$, and $\H$; see \autoref{lem_ito_stokes} below.

It appears that only in the case $\H=\tfrac12$, where the Wong–Zakai and multiscale regimes coincide, do both effects, transport noise and It\^o–Stokes drift, simultaneously emerge in the limiting dynamics, as observed in \cite{debussche2024second, debussche2023rough}.

Finally, solutions to \eqref{eq_Hgreat12} (respectively, \eqref{eq_Hsmall12}) are understood in the sense of \autoref{def_rough_lim_sol_Hgreat12} (respectively, \autoref{def_rough_lim_sol_Hsmall12}).
\subsection{Methodologies and Main Results}\label{subsec_results}
Similarly to \cite{debussche2023rough}, we introduce the decomposition
\begin{align*}
    v^{\epsilon}=\epsilon^{-\alpha+\H}w^{\epsilon}+r^{\epsilon}.
\end{align*}
After projecting onto divergence-free vector fields via the Leray projector $\Pi$, denoting by $A$ the Stokes operator $A:=\Pi\Delta$, and by $b(\cdot,\cdot)$ the Navier–Stokes nonlinearity as rigorously defined in \autoref{sec_notation}, the components evolve according to the following system:
\begin{align}\label{system_prelimit}
    \begin{cases}
        &\partial_t u^{\epsilon}=\nu A u^{\epsilon}-b(u^{\epsilon}+\epsilon^{-\alpha+\H}w^{\epsilon}+r^{\epsilon},u^{\epsilon}),\\
        &dw^{\epsilon}=\epsilon^{-1}Cw^{\epsilon} dt+ \epsilon^{-\H}dW^\H_t, \\
        &\begin{aligned}\partial_t r^{\epsilon}&=\epsilon^{-1}Cr^{\epsilon}+\nu A(\epsilon^{-\alpha+\H}w^{\epsilon}+r^{\epsilon})\\&-b(u^{\epsilon}+\epsilon^{-\alpha+\H}w^{\epsilon}+r^{\epsilon},\epsilon^{-\alpha+\H}w^{\epsilon}+r^{\epsilon}),
        \end{aligned}\\
        &w^{\epsilon}(0) =0,\quad r^{\epsilon}(0)=v_0^{\epsilon}.
    \end{cases}
\end{align}
Solutions to this system are understood in the sense of \autoref{def_rough_sol}. We do not provide the details of the construction of solutions for fixed $\epsilon\in(0,1)$, as these follow from classical arguments. In particular, the rescaled Ornstein–Uhlenbeck process $w^{\epsilon}$ exists on any probability space supporting the $Q$-fractional Wiener process $W^{\H}$, is uniquely determined, and is measurable with respect to $W^{\H}$. Moreover, for each fixed $\epsilon\in(0,1)$ there exists a probabilistically and analytically weak solution to \eqref{system_prelimit}. That is, for every $\epsilon\in(0,1)$ there exist a probability space $(\Omega_\epsilon,\mathcal F_\epsilon,\mathbb P_\epsilon)$, a $Q$-fractional Wiener process $W^{\epsilon,\H}$, and processes $(u^{\epsilon},r^{\epsilon},w^{\epsilon})$ solving \eqref{system_prelimit} in the analytically weak sense. The proof relies on a Galerkin approximation combined with stochastic compactness arguments. In dimension $d=2$, where uniqueness holds as in the deterministic case, probabilistically weak existence can in fact be strengthened to probabilistically strong existence: as for the Ornstein–Uhlenbeck process alone, the triple $(u^{\epsilon},w^{\epsilon},r^{\epsilon})$ exists on any probability space supporting the $Q$-fractional Wiener process $W^{\H}$.

As already mentioned, it is necessary to reformulate the system within the framework of rough path theory. This is carried out in \autoref{sec_rough_formulation} and \autoref{sec_unbounder_rough}, and allows us to control the singular term $b(\epsilon^{-\alpha+\H}w^{\epsilon},u^{\epsilon})$ by introducing the process
\begin{align*}
    y^{\epsilon}_t=\epsilon^{-\alpha+\H}\int_0^t w^{\epsilon}_s ds 
\end{align*}
together with its canonical lifts. Here, in contrast to \cite{debussche2023rough}, we cannot rely on It\^o-type formulas to characterize the lifts, and instead must appeal to the theory of Gaussian rough paths (see \cite{friz2010multidimensional, friz2014course}), especially in the case $\H<\tfrac12$. As already discussed, when $\H>\tfrac12$ no It\^o–Stokes drift appears in the limit, whereas for $\H<\tfrac12$ this drift is the only surviving contribution. This dichotomy stems from the convergence properties of $r^{\epsilon}$, which ultimately rely on an ergodic-type result for $\int_0^T b(w^{\epsilon}_s,w^{\epsilon}_s),ds$. While the Markovian structure makes this argument relatively standard when $\H=\tfrac12$ (cf. \cite[Section~5]{debussche2023rough}), in the fractional case we must instead rely on ad hoc computations exploiting the explicit structure of the Navier–Stokes nonlinearity; see \autoref{sec:stoc_conv}. With these preliminaries in place, we can now state our main result.
\begin{theorem}\label{main_thm}
Let $\H\in (\frac{1}{3},1)$, and let the initial data $u_0^{\epsilon},v_0^\epsilon $ be divergence-free, of zero mean, and such that
\begin{align*}
   \sup_{\epsilon\in (0,1)} \|u_0^{\epsilon}\|_{L^2_x}+\|\sqrt{\epsilon}v_0^{\epsilon}\|_{L^2_x}<+\infty.
\end{align*}
Assume that the operators $Q$ and $C$ satisfy the conditions stated in \autoref{sec_hp_noise}. Then there exist probabilistically weak rough path solutions $(u^{\epsilon},(-C)^{-1}dW^{\H,\epsilon})$ to \eqref{system_prelimit} that converge in law to a probabilistically weak rough path solution of \eqref{eq_Hgreat12} in the case $\alpha=1$ and $\H>\tfrac{1}{2}$, and to a probabilistically weak rough path solution of \eqref{eq_Hsmall12} in the case $\H<\tfrac{1}{2}$ and $\alpha=\tfrac{1}{2}+\H$. 
\end{theorem}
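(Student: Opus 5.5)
The strategy is a compactness–identification scheme carried out in the framework of unbounded rough drivers. The first step is to write the $u^\epsilon$ equation as a rough equation driven by the unbounded rough driver built from $y^\epsilon_t=\epsilon^{-\alpha+\H}\int_0^t w^\epsilon_s\,ds$ and its canonical (Gaussian) lift $\mathbf{Y}^\epsilon=(y^\epsilon,\mathbb{Y}^\epsilon)$. The first-order part is $A^{1,\epsilon}_{s,t}\phi=-y^\epsilon_{s,t}\cdot\nabla\phi$, and the second-order part is $A^{2,\epsilon}_{s,t}\phi=\mathbb{Y}^\epsilon_{s,t}:\nabla\nabla\phi$, projected via $\Pi$. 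With this driver, the singular term $b(\epsilon^{-\alpha+\H}w^\epsilon,u^\epsilon)$ is absorbed into the rough increments and never has to be controlled pointwise in time.

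\textbf{Uniform bounds.}
\begin{enumerate}
\item[(a)] \emph{Noise process.} Using the explicit Gaussian structure of $w^\epsilon$ as a rescaled fractional Ornstein–Uhlenbeck process, I bound the covariance of $y^\epsilon$ in two-dimensional $\rho$-variation uniformly in $\epsilon$, for some $\rho<\frac{1}{2\H}\vee 1$. Gaussian rough path theory (Friz–Victoir) then gives uniform moments for the $p$-variation rough path norm of $\mathbf{Y}^\epsilon$ with some $p\in(2,3)$, which is where $\H>\frac13$ enters. For $\H>\frac12$ with $\alpha=1$, I use $y^\epsilon_t=(-C)^{-1}(W^\H_t-\epsilon^{1-\H}w^\epsilon_t)$, so $y^\epsilon\to(-C)^{-1}W^\H$ in rough path metric; in this Young regime the limit lift is the Young one. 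For $\H<\frac12$ with $\alpha=\frac12+\H$, the prefactor $\epsilon^{-1/2}$ forces $y^\epsilon\to0$ in $p$-variation. The symmetric part of $\mathbb{Y}^\epsilon$ also vanishes, while the antisymmetric part combined with $\nabla\cdot$ produces, after averaging, a deterministic contribution. I expect to identify this contribution with $\overline r$ of \autoref{lem_ito_stokes} via the computations of \autoref{sec:stoc_conv}.
\item[(b)] \emph{The pair $(u^\epsilon,r^\epsilon)$.} Since the transport terms are skew, energy estimates give $\sup_t\|u^\epsilon\|_{L^2}^2+\nu\int\|\nabla u^\epsilon\|^2$ and an analogous weighted bound for $\sqrt\epsilon\, r^\epsilon$, uniformly in $\epsilon$. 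The damping $\epsilon^{-1}C$ controls the cross terms, and the forcing $b(\epsilon^{-\alpha+\H}w^\epsilon,\epsilon^{-\alpha+\H}w^\epsilon)$ is handled through its small-scale average. The key consequence is $r^\epsilon\to0$ in $L^2_tH^{-s}$ in probability. For $\H<\frac12$, however, I need the finer statement that $b(u^\epsilon,r^\epsilon)$ and $b(r^\epsilon,u^\epsilon)$ average to the It\^o–Stokes term. To get this I write $r^\epsilon\approx \epsilon(-C)^{-1}b(w\text{-part},w\text{-part})$ and use the ergodic-type lemma for $\int b(w^\epsilon,w^\epsilon)\,ds$ from \autoref{sec:stoc_conv}.
\item[(c)] \emph{Remainder and time regularity.} The a priori estimates for unbounded rough drivers (Bailleul–Gubinelli / Hocquet's remainder estimate in the sense of \autoref{sec_unbounder_rough}) control the remainder $u^\epsilon_{s,t}-A^{1,\epsilon}_{s,t}u^\epsilon_s-A^{2,\epsilon}_{s,t}u^\epsilon_s$ in $p/3$-variation in $H^{-3}$. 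This yields uniform $p$-variation bounds on $u^\epsilon$ in a negative Sobolev space.
\end{enumerate}

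\textbf{Compactness and identification.} Aubin–Lions–Simon type compactness gives tightness of $(u^\epsilon,\mathbf{Y}^\epsilon,r^\epsilon)$ in $L^2_tL^2_x\cap C_tH^{-s}_w$ times rough path space. Skorokhod–Jakubowski then produces a.s. convergent copies on a new probability space, and I pass to the limit in the rough weak formulation for each test function. The nonlinear terms converge by strong $L^2$ convergence. The rough terms converge by continuity of the rough integral with respect to the driver, together with stability of the remainder estimate. The limit is the equation of \autoref{def_rough_lim_sol_Hgreat12} or of \autoref{def_rough_lim_sol_Hsmall12}, according to the regime.

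\textbf{Main obstacle.} The hard step is the $\H<\frac12$ identification, where the It\^o–Stokes drift appears. In this regime the L\'evy-area part of $\mathbb{Y}^\epsilon$ and the $b(r^\epsilon,u^\epsilon)$ contribution must together converge to the deterministic $\overline r\cdot\nabla u$, with no It\^o formula available because the noise is non-Markovian. My first attempt is to compute second moments explicitly via the covariance of fractional OU in Fourier modes, using that $C$ and $Q$ commute and are diagonal. This should show variance decay of order $\epsilon^{\gamma}$ with $\gamma>0$, and hence convergence in $L^2(\Omega)$ of the averaged quadratic functionals to their means, which define $\overline r$.
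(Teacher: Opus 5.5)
Your architecture is the paper's: the splitting $v^\epsilon=\epsilon^{-\alpha+\H}w^\epsilon+r^\epsilon$, the URD built from the canonical lift of $y^\epsilon$, Gaussian rough path convergence, energy and remainder estimates, Jakubowski--Skorokhod, and an ergodic lemma for $b(w,w)$. However, the step you call the main obstacle, the regime $\H<\frac12$, $\alpha=\frac12+\H$, rests on a wrong mechanism.

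\textbf{The L\'evy area contributes nothing.} Since $y^\epsilon=\epsilon^{\frac12-\H}\overline y^\epsilon$ with $\overline y^\epsilon$ the $\alpha=1$ process, the canonical lift is a dilation: $\mathbb{Y}^{\epsilon,1}=\epsilon^{\frac12-\H}\overline{\mathbb{Y}}^{\epsilon,1}$ and $\mathbb{Y}^{\epsilon,2}=\epsilon^{1-2\H}\overline{\mathbb{Y}}^{\epsilon,2}$. The uniform bounds of \autoref{prop_rough_pathless12} then force both levels, antisymmetric part included, to vanish (\autoref{cor:beta12}). Hence $\mathbb{A}^{1,\epsilon},\mathbb{A}^{2,\epsilon}\to0$, and the limit equation has no rough term at all.

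\textbf{$r^\epsilon$ does not tend to zero here.} Your ``key consequence'' $r^\epsilon\to0$ is false in this regime. The damping $\epsilon^{-1}C$ gives a log-moment bound on $\int_0^T\|r^\epsilon\|^2_{H^\gamma}dt$, and that unweighted bound is the one you need, not the weighted bound on $\sqrt\epsilon\, r^\epsilon$. It yields only weak convergence in $L^2((0,T)\times\T^3)$, and the limit is the deterministic, generally nonzero $\overline r$. The whole It\^o--Stokes drift is $b(\overline r,u)$, the weak--strong limit of $b(r^\epsilon,u^\epsilon)$; note that $b(u^\epsilon,r^\epsilon)$ does not enter the $u^\epsilon$-equation. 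As written, your plan asks a vanishing area term and an $r^\epsilon$-term you claim vanishes to combine into $\overline r\cdot\nabla u$, which cannot succeed.

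\textbf{The correct route.} It is the one you only sketch in (b), and it is exactly \autoref{identification_lemma}. Test the $r^\epsilon$-equation against $\psi\in C^1_c(0,T)$ and apply $\epsilon(-C)^{-1}$. Then $\int_0^T\psi_t r^\epsilon_t\,dt$ equals $\int_0^T\psi_t(-C)^{-1}b(w^\epsilon_t,w^\epsilon_t)\,dt$ up to terms with prefactors $\epsilon^{1/2}$ or $\epsilon$. These vanish in probability by the energy bound on $u^\epsilon$, \autoref{hp_stoch_conv}, and the log-moment bound on $r^\epsilon$ from \autoref{lemma_a_priori}. Next write $w^\epsilon_t=\tilde w^\epsilon_{t/\epsilon}$, where $\tilde w^\epsilon$ is a fractional Ornstein--Uhlenbeck process at unit time scale whose law does not depend on $\epsilon$, and integrate by parts in $t$. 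The quadratic term becomes $-\int_0^T\partial_t\psi_t\,\epsilon\int_0^{t/\epsilon}(-C)^{-1}b(\tilde w^\epsilon_s,\tilde w^\epsilon_s)\,ds\,dt$. Now \autoref{lem_ito_stokes}, which is where your explicit fOU second-moment computation belongs, together with dominated convergence gives $\int_0^T\psi_t\,\overline r\,dt$. Only for $\H>\frac12$, $\alpha=1$, is $r^\epsilon\to0$ correct, because the forcing there is $\epsilon^{2\H-1}b(w^\epsilon,w^\epsilon)$.

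\textbf{Minor slips.}
\begin{itemize}
\item For $\alpha=1$ one has $y^\epsilon=(-C)^{-1}(W^\H-\epsilon^{\H}w^\epsilon)$, not $\epsilon^{1-\H}$.
\item The fBm covariance has finite $\rho$-variation only for $\rho\ge\frac1{2\H}\vee1$. So a bound uniform in $\epsilon$ must use $\rho=\frac1{2\H}\vee1<\frac32$, not $\rho<\frac1{2\H}\vee1$.
\item $\mathbb{A}^{2,\epsilon}=b^{(2)}(\mathbb{Y}^{\epsilon,2},\cdot)$ also has a first-order part and an inner Leray projection, so it is not just $\mathbb{Y}^{\epsilon,2}:\nabla\nabla$. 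This is immaterial once $\mathbb{Y}^{\epsilon,2}\to0$.
\end{itemize}
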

\begin{remark}\label{remark_d2}
    In dimension $d=2$, thanks to the uniqueness of both the prelimit and the limit equations, the above statement can be strengthened. Under the same assumptions as in \autoref{main_thm}, there exist probabilistically strong rough path solutions $(u^{\epsilon},(-C)^{-1}dW^{\H})$ to \eqref{system_prelimit} that converge in probability to the unique weak rough path solution of \eqref{eq_Hgreat12} in the case $\alpha=1$ and $\H>\tfrac{1}{2}$, and of \eqref{eq_Hsmall12} in the case $\H<\tfrac{1}{2}$ and $\alpha=\tfrac{1}{2}+\H$.
\end{remark}
\begin{remark}\label{remark_uniqueness}
    Note that even in the case $\H<\tfrac{1}{2}$ and $\alpha=\tfrac{1}{2}+\H$, the limit object is, in principle, genuinely stochastic and supported on (possibly non-unique) weak solutions of \eqref{eq_Hsmall12}.
\end{remark}
We now discuss why the complementary regimes appear to be out of reach. In the case $\H>\tfrac{1}{2}$ and $\alpha=\tfrac{1}{2}+\H>1$, neglecting the additional Navier–Stokes terms yields that the small scales satisfy
\begin{align*}
    v^{\epsilon}_t=\epsilon^{-(\H-1/2)}\epsilon^{-1}\int_0^t e^{-\epsilon^{-1}C(t-s)} dW^{\H}_s.
\end{align*}
In view of \eqref{heuristic_convergence_noise}, this leads to
\begin{align*}
    v^{\epsilon}_t\approx \epsilon^{-(\H-1/2)}(-C)^{-1}dW^{\H}_t\rightarrow +\infty,
\end{align*}
showing that the small-scale component diverges.
The case $\H<\tfrac{1}{2}$ and $\alpha=1$ is more delicate. Here, the main difficulty is the lack of control of the quadratic nonlinearity in the equation for the small scales $v^{\epsilon}$. Nevertheless, for particularly simple choices of the noise, namely 
\begin{align*}
    W^{\H}_t(x)=f(x)W^{\H,1}_t,
\end{align*}
where $f$ is divergence-free, of zero mean, and satisfies $f\cdot\nabla f=0$\footnote{For instance, this holds when $f(x)=a_l \sin (l\cdot x),\ l,\ a_l\in \R^3,\ a_l\perp l$. } and $W^{\H,1}$ is a real-valued fractional Brownian motion, we can still establish the following result.
\begin{proposition}\label{trivial_noise}
Let $\alpha=1$, $\H\in\left(\tfrac{1}{3},\tfrac{1}{2}\right)$, and let the initial data $u_0^{\epsilon}$ and $v_0^{\epsilon}$ be divergence-free, of zero mean, and satisfy
\begin{align*}
   \sup_{\epsilon\in (0,1)} \|u_0^{\epsilon}\|_{L^2_x}+\|\sqrt{\epsilon}v_0^{\epsilon}\|_{L^2_x}<+\infty.
\end{align*} Assume that $W^{\H}_t(x)=f(x)W^{\H,1}_t$, with $f$ as above, and that $Q$ and $C$ satisfy the conditions stated in \autoref{sec_hp_noise}. Then there exist probabilistically weak rough path solutions $(u^{\epsilon},(-C)^{-1}dW^{\H,\epsilon})$ to \eqref{system_prelimit} that converge in law to a probabilistically weak rough path solution of \eqref{eq_Hgreat12}. An analogue of \autoref{remark_d2} holds in dimension $d=2$.
\end{proposition}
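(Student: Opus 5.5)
The plan is to follow the same scheme used for \autoref{main_thm} in the Wong--Zakai regime $\alpha=1$. The only point where $\H<\tfrac12$ made the general case unreachable is the control of the quadratic term $b(\epsilon^{-1+\H}w^\epsilon,\epsilon^{-1+\H}w^\epsilon)$ in the equation for $r^\epsilon$, and the structural assumption on $f$ removes it.

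First I will write the noise as $W^\H=fW^{\H,1}$. Since $C$ and $Q$ commute and $Q$ is concentrated on $\mathrm{span}\{f\}$, $f$ is an eigenvector of $C$, say $Cf=-\lambda f$ with $\lambda>0$ (if $Q$ is only invariant on a finite-dimensional eigenspace, the same argument runs componentwise). Then $w^\epsilon_t=f\,\eta^\epsilon_t$, where $\eta^\epsilon$ is the scalar fractional Ornstein--Uhlenbeck process
\begin{align*}
d\eta^\epsilon=-\epsilon^{-1}\lambda\eta^\epsilon dt+\epsilon^{-\H}dW^{\H,1}.
\end{align*}
It follows that
\begin{align*}
b(w^\epsilon,w^\epsilon)=(\eta^\epsilon)^2\,\Pi(f\cdot\nabla f)=0.
\end{align*}
So the singular self-interaction in the $r^\epsilon$ equation vanishes identically, without any ergodic argument. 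The remaining singular terms are
\begin{itemize}
\item $\epsilon^{-1+\H}\nu Aw^\epsilon=\epsilon^{-1+\H}\eta^\epsilon\,\nu Af$, which is linear in $w^\epsilon$ and regular in space, and
\item $\epsilon^{-1+\H}b(u^\epsilon+r^\epsilon,w^\epsilon)$ and $\epsilon^{-1+\H}b(w^\epsilon,u^\epsilon+r^\epsilon)$, which are linear in $w^\epsilon$ and act on $(u^\epsilon,r^\epsilon)$ through transport by, or of, the smooth field $f$.
\end{itemize}

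Second, I will lift $y^\epsilon_t=\epsilon^{-1+\H}\int_0^t w^\epsilon_s ds=f\,\epsilon^{-1+\H}\int_0^t\eta^\epsilon_s ds$ to a rough path. Because the driver is one-dimensional, the canonical lift is geometric and its second level is simply $\tfrac12(\delta y^\epsilon)^{\otimes 2}$. The Gaussian rough path results used earlier in the paper, or a direct Wong--Zakai argument as in \cite{friz2014convergence}, then give convergence in $p$-variation rough path topology, $p<1/\H<3$, to the lift of $\lambda^{-1}fW^{\H,1}=(-C)^{-1}W^\H$.

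Next I need uniform energy estimates for $(u^\epsilon,\sqrt\epsilon\, r^\epsilon)$. I will obtain them by treating
\begin{itemize}
\item the transport terms $b(\cdot,u^\epsilon)$ driven by $y^\epsilon$ through the unbounded rough driver estimates of \autoref{sec_unbounder_rough}, and
\item the $r^\epsilon$ equation by the energy identity, where $\langle b(\cdot,r^\epsilon),r^\epsilon\rangle=0$ and the linear-in-$w^\epsilon$ forcing terms are paired against $r^\epsilon$.
\end{itemize}

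The main obstacle is that for $\alpha=1$ and $\H<\tfrac12$ the prefactor is $\epsilon^{-1+\H}$, so $\epsilon^{-1+\H}\eta^\epsilon$ is not bounded pointwise in time. I will therefore handle the forcing $\epsilon^{-1+\H}\eta^\epsilon_t\,G(u^\epsilon,r^\epsilon)$ by
\begin{itemize}
\item writing $\epsilon^{-1+\H}\eta^\epsilon\,dt=dy^\epsilon$ and integrating as a rough integral with controlled integrand, or alternatively
\item a further splitting $r^\epsilon=\tilde r^\epsilon+\epsilon^{-1+\H}\eta^\epsilon z$, with $z$ solving a stationary corrector equation, so as to absorb the term with the damping $\epsilon^{-1}C$.
\end{itemize}
Estimating the resulting remainders uniformly in $\epsilon$, using that $\epsilon\,\mathbb E\sup_t|\epsilon^{-1+\H}\eta^\epsilon_t|^2\lesssim\epsilon^{2\H-1}\log\epsilon^{-1}$ and the damping provided by $C$, is the delicate step. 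With the bounds in hand, I will show that the It\^o--Stokes contribution vanishes: the term $b(r^\epsilon,u^\epsilon)$ tends to zero weakly, because $\sqrt\epsilon\, r^\epsilon$ is bounded and the singular forcing has zero mean.

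Finally, tightness in $L^2_tH^{-s}\cap C_tH^{-s'}$, joint with the rough path lifts, follows from Aubin--Lions together with the rough remainder estimates. A Skorokhod representation then lets me pass to the limit in the rough formulation of \autoref{def_rough_lim_sol_Hgreat12}, exactly as in the proof of \autoref{main_thm} for $\H>\tfrac12$. In $d=2$, pathwise uniqueness of both equations and the Gy\"ongy--Krylov lemma upgrade convergence in law to convergence in probability, as in \autoref{remark_d2}.
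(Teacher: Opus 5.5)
Your overall architecture (lift of $y^\epsilon$, uniform bounds, tightness, Skorokhod, passage to the limit) matches the paper's, and the rough-path part is fine. For a one-dimensional driver, your remark that the second level is $\tfrac12(\delta y^\epsilon)^{\otimes 2}$ even lets you bypass \autoref{prop_rough_pathless12}. Indeed $y^\epsilon=(-C)^{-1}(W^{\H}-\epsilon^{\H}w^\epsilon)$ for $\alpha=1$ and any $\H$, so the interpolation argument of \autoref{proproughHgreat12} gives convergence in $p$-variation for $p\in(\frac1\H,3)$; note $p>\frac1\H$, not $p<\frac1\H$.

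The genuine gap is the a priori estimate on $r^\epsilon$. This is the only place the hypothesis on $f$ enters, and you leave it open as ``the delicate step''. Worse, the only quantitative input you cite, $\epsilon^{2\H-1}\log\epsilon^{-1}$, diverges for $\H<\frac12$, so an estimate built on it cannot be uniform in $\epsilon$.

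The obstacle you identify is not real. Test the $r^\epsilon$-equation with $r^\epsilon$ and multiply by $\epsilon$. The damping then gives an order-one coercive term, the terms $\langle b(\cdot,r^\epsilon),r^\epsilon\rangle$ vanish, and every term linear in $w^\epsilon$ carries the prefactor $\epsilon\cdot\epsilon^{-1+\H}=\epsilon^{\H}$:
\begin{align*}
\epsilon\frac{d}{dt}\|r^\epsilon\|^2+2\epsilon\|\nabla r^\epsilon\|^2+2\langle -Cr^\epsilon,r^\epsilon\rangle
&=2\epsilon^{\H}\langle Aw^\epsilon,r^\epsilon\rangle-2\epsilon^{\H}\langle b(u^\epsilon+r^\epsilon,w^\epsilon),r^\epsilon\rangle\\
&\quad-2\epsilon^{2\H-1}\langle b(w^\epsilon,w^\epsilon),r^\epsilon\rangle .
\end{align*}
Only the last term has a divergent prefactor, and it is identically zero since $b(w^\epsilon,w^\epsilon)=(\eta^\epsilon)^2\Pi(f\cdot\nabla f)=0$.

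The $\epsilon^{\H}$-terms must be absorbed into $\langle -Cr^\epsilon,r^\epsilon\rangle\gtrsim\|r^\epsilon\|_{H^\gamma}^2$, not into $\epsilon\|r^\epsilon\|^2$; absorbing into the latter is what produces your $\epsilon^{2\H-1}$. For instance,
$\epsilon^{\H}\|\nabla w^\epsilon\|_{L^\infty}\|r^\epsilon\|^2\le\delta\|r^\epsilon\|^2+C_\delta\,\epsilon\|r^\epsilon\|^2\|\nabla w^\epsilon\|_{L^\infty}^{1/\H}$.
Gr\"onwall together with \autoref{hp_stoch_conv} then yields $\sup_{\epsilon}\mathbb{E}\log\bigl(1+\int_0^T\|r^\epsilon_t\|_{H^\gamma}^2\,dt\bigr)<\infty$. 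No rough integration or corrector in the fast equation is needed. Likewise, URD estimates are not needed for the energy bound on $u^\epsilon$, which is an exact pathwise identity; they enter only in the $p$-variation bound in $H^{-\xi+1}$.

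This $L^2_tH^\gamma$ bound, not boundedness of $\sqrt\epsilon\, r^\epsilon$, is what the rest of the proof needs, and your final step fails as stated. The term $b(r^\epsilon,u^\epsilon)$ enters the slow equation with coefficient one. It appears in the drift control $\omega_{\mu^\epsilon}$ behind the $p$-variation estimate of $u^\epsilon$, and it must be passed to the limit. Boundedness of $\sqrt\epsilon\,r^\epsilon$ allows $r^\epsilon$ of size $\epsilon^{-1/2}$ and gives neither.

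Nor is the vanishing of the limit a ``zero-mean'' effect. As in \autoref{identification_lemma}, apply $\epsilon(-C)^{-1}$ to the $r^\epsilon$-equation tested against $\psi\in C^1_c(0,T)$:
\begin{align*}
\int_0^T\psi_t\, r^\epsilon_t\,dt=\epsilon^{2\H-1}\int_0^T\psi_t(-C)^{-1}b(w^\epsilon_t,w^\epsilon_t)\,dt+\bigl(\text{terms with prefactor }\epsilon^{\H}\text{ or }\epsilon\bigr).
\end{align*}
The remaining terms tend to zero in probability in $H^{-\xi}$, by the log-moment bound and \autoref{hp_stoch_conv}. The first term vanishes identically under the assumption on $f$. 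For general $Q$ it behaves like $\epsilon^{2\H-1}$ times the ergodic mean of \autoref{lem_ito_stokes}, which is exactly why the general case $\alpha=1$, $\H<\frac12$ is out of reach. Hence, on the Skorokhod space, $r^\epsilon\rightharpoonup 0$ in $L^2_{t,x}$, and $b(r^\epsilon,u^\epsilon)\to0$ by weak--strong convergence, using $u^\epsilon\to u$ strongly in $L^2_{t,x}$.
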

We conclude this subsection by mentioning the recent preprint \cite{li2026navierstokesfractionaltransportnoise}, which appeared during the final stages of preparation of this work. There, the authors study a fast–slow Navier–Stokes system similar to \eqref{e:equation1}, with $\H\in\left(\tfrac{1}{3},\tfrac{1}{2}\right)$ and $\alpha=1$, focusing on the Wong–Zakai regime. While the slow dynamics coincide with ours, the fast dynamics differ substantially, being governed in \cite{li2026navierstokesfractionaltransportnoise} by the linear equation
\begin{align}\label{eq_fast_1}
dv^{\epsilon}=\epsilon^{-1}Cv^{\epsilon}dt+\epsilon^{-1}dW^{\H}_t.
\end{align}
As already noted after \autoref{main_thm}, the impossibility to control the quadratic term in the fast dynamics prevents us from obtaining a Wong–Zakai convergence result analogous to \cite[Theorem~B]{li2026navierstokesfractionaltransportnoise} within our framework. If this difficulty is removed from the outset, either by adopting the simplified fast dynamics \eqref{eq_fast_1}, as in \cite{li2026navierstokesfractionaltransportnoise}, or by considering instead
\begin{align}\label{eq_fast_2}
dv^{\epsilon}=\nu Av^{\epsilon}dt+\epsilon^{-1}Cv^{\epsilon}dt-b(u^{\epsilon},v^{\epsilon})dt+\epsilon^{-1}dW^{\H}_t,
\end{align}
then our arguments readily extend to the regime $\H\in\left(\tfrac{1}{3},\tfrac{1}{2}\right)$ with $\alpha=1$, thereby recovering the results of \cite{li2026navierstokesfractionaltransportnoise}. We have chosen not to pursue this direction further; instead, we address the difficulty caused by the uncontrolled quadratic term in the fast dynamics by imposing a strong structural assumption on the covariance of the noise $W^{\H}$, as discussed in \autoref{trivial_noise}.
\subsubsection*{Plan of the paper}In \autoref{sec_prel} we rigorously introduce our assumptions on $Q$ and $C$, and we recall some rough path notation including the notion of solutions for \eqref{eq_Hgreat12}, \eqref{eq_Hsmall12}, \eqref{system_prelimit}. The analysis of the long-time behaviour of $w^{\epsilon}$ and of some of its nonlinear functionals is addressed in \autoref{sec:stoc_conv}. The core of our rough path analysis is \autoref{sec_unbounder_rough}, where we study the convergence properties of the canonical lift of $\int_0^{\cdot}e^{-\alpha+\H}w^{\epsilon}_s ds$. The last ingredients for the proof of \autoref{main_thm} are some a priori estimates obtained in \autoref{sec_apriori} exploiting the rough path formulation already mentioned. Combining these elements, the proofs of both \autoref{main_thm} and \autoref{trivial_noise} follow by standard arguments as discussed in \autoref{sec:end_proof}.
\section{Preliminaries}\label{sec_prel}
\subsection{Notation}\label{sec_notation}
 Let $H$ be the subspace of $L^2(\T^d,\R^d)$ composed by zero mean, divergence-free vector fields, where $\T^d$ denotes the d-dimensional torus and $d\in\{2,3\}$.\\ We denote by $\Pi$ the Leray-Hopf projector of $L^2(\T^d,\R^d)$ in $H$, i.e. $\Pi=\I-\nabla(-\Delta)^{-1}\dvg$ and by $A=\Pi\Delta$ the Stokes operator on $H$. It is a closed, unbounded, linear operator $A:\mathrm{Dom}(A)=H^2\cap H\subseteq H\rightarrow H$ and is well-known (see for example \cite{temam2024navier}) that $A$ is self-adjoint, generates an analytic semigroup of negative type on $H$. \\
 We further denote by $H^n=\mathrm{Dom}((-A)^{n/2})$ equipped with the graph norm
\begin{align*}
     \|v\|_{H^n}:=\langle(-A)^{n/2}v,(-A)^{n/2}v\rangle,
\end{align*}
 where $\langle\cdot,\cdot\rangle$ denotes the standard scalar product in $L^2$.\\
In the following, we will denote by $b$ the Navier-Stokes non-linearity, namely
\begin{equation*}
    b(u,v)=\Pi(u\cdot\nabla)v.
\end{equation*}
We conclude this subsection by specifying the formalism of the rough path theory. \\
Any continuous function $\omega:\Delta_T:=\{(s,t):0\le s\le t\le T\}\to \R$ s.t. $\omega(t,t)=0$ and super-additive, i.e $\omega(s,u)+\omega(u,t)\le \omega(s,t)$, is called a control.\\
For $p>0$ and a Banach space $V$, a continuous $1$-index map $f:[0,T]\to V$ has finite $p$-variation if 
\begin{equation}\label{eq:pVar}
    \|f\|_{p\var,[0,T]}^p:=\sup_{\mathbf{P}\in \Pi}\sum_{t_i\in \mathbf{P}}\|f(t_i,t_{i+1})\|_V^p<\infty,
\end{equation}
where $\Pi$ denotes the partitions $\mathbf{P}:=\{t_0:=0,\dots,t_N:=T\}$ of $[0,T]$, while $f_{t_i,t_{i+1}}:=f(t_{i+1})-f(t_i)$. Analogously, a $2$-index function $f:\Delta_T\to V$ has finite $p$-variation if \eqref{eq:pVar} holds with $f_{t_i,t_{i+1}}:=f(t_i,t_{i+1})$.
Let us recall that 
$\omega_f(s,t):=\|f\|_{p\var,[s,t]}^p$  is a control. Whenever we want to highlight the Banach space under consideration, we will denote the $p$-variation seminorm as $\|f\|_{p\var,[0,T],V}.$\\
We consider the Polish space $C^{p\var}(V)$ (resp. $C^{p\var}_2(V)$) given by the closure of smooth 1-index functions (resp. 2-index functions) with respect to the seminorm $\|\cdot\|_{p\var}$. 
Moreover, we denote by $C^{p\var}_{2,loc}(V)$ the space of $2$-index map $f$ such that there exists a finite covering $\{I_k\}_{k}$ of $[0,T]$ where $f$ has finite $p$-variation on any $I_k$.

For any Hilbert space $H$, let $H^{\otimes k}$ be equipped with the standard Hilbert space structure.
\begin{definition} Let $p\in [N,N+1)$ for $N\in\{1,2\}$,
     a $p$-rough path is a function $\Y=(\mathbb{Y}^1,\dots,\mathbb{Y}^N): \Delta_T\to H\oplus \dots \oplus H^{\otimes N}$ such that
     \begin{enumerate}
         \item for $k\le N$, $\mathbb{Y}^k\in C^{\frac{p}{k}\var}_{2}(H^{\otimes k});$
        \item Chen's relation holds, i.e. for $s\le u\le t$
        \begin{equation*}
            \delta\mathbb{Y}^1_{s,u,t}=0,\quad \delta\mathbb{Y}^2_{s,u,t}=\mathbb{Y}^1_{s,u}\otimes\mathbb{Y}^1_{u,t}
        \end{equation*}
     \end{enumerate}
     where $\delta v_{s,u,t}=v\st-v_{s,u}-v_{u,t}$.
\end{definition}
To properly define URDs, we introduce the concept of scale:
a family of Banach spaces $E=(E_n,\|\cdot\|_n)_{n\in\N}$ will be called a scale if $E_n$ is continuously embedded in $E_m$ for $n>m$. We denote $E_n^{*}$ by $E^{-n}$.

\begin{definition}\label{def:URD}
    Let $p\in [N,N+1)$ for $N\in\{1,2\},$ and $E$ a scale. Then, an $N$-tuple $A=(\mathbb{A}^1,\dots,\mathbb{A}^N)$ of two index maps is a $p$-unbounded rough driver w.r.t. the scale $E$ if the following conditions hold:
    \begin{enumerate}
        \item there exists a control $\omega$ s.t. 
        \begin{equation*}
            \|\mathbb{A}^k_{s,t}\|_{\mathcal{L}(E^{-m},E^{-m-k})}^{p/k}\lesssim \omega(s,t)\quad \text{for }m\in [0,N+1-k];
        \end{equation*}
        \item a functional version of Chen's relation is satisfied, namely
        \begin{equation}
            \begin{aligned}
                \delta\mathbb{A}^1_{s,u,t}=0,\quad \delta \mathbb{A}^2_{s,u,t}=\mathbb{A}^1_{u,t}\mathbb{A}^1_{s,u}.
            \end{aligned}
        \end{equation}
    \end{enumerate}
\end{definition}
At last, since we will analyze $H$-valued Gaussian rough paths $X$, we denote its covariance by $R_X$, namely
\begin{equation}\label{eq:CovGauss}
    R_X(s,t):=\EE[X_s\otimes X_t].
\end{equation}
\subsection{Small Scales Assumptions}\label{sec_hp_noise}
Regarding the noise structure, let $\xi>\frac{d}{2}+2$ and $\{e_i\}_{i\in \mathbb{N}}$ be an orthonormal system in $H$ made by orthogonal functions in $H^{\xi}$ and $\sigma_i\geq 0$ such that
\begin{align}\label{HP_noise_coeff}
    \sum_{i\geq 1}\sigma_i^2 \norm{e_i}_{H^{\xi}}^2<+\infty.
\end{align}
The former implies in particular that $\sup_{i}\sigma^2<+\infty$ and the operator 
 \begin{align*}
    Q:=\sum_{i\geq 1}\sigma_i^2 e_i\otimes e_i,
\end{align*} is trace classe on $H^{\xi}.$\\
For a family of i.i.d. 1-dimensional fractional Brownian motions $\{W^{\mathcal{H},i}\}_{i\in\N}$ with Hurst exponent $\H$, we consider the following fractional noise
 \begin{equation*}
     W^\H_t=\sum_{i\in\N}\sigma_ie_iW^{\H,i}_t.
 \end{equation*}
 Moreover, for $1/3<\H <1/2$ we assume that there exists $p\in (\frac{1}{\H},3)$ s.t.
 \begin{equation*}
     \sum_k|\sigma_k\|e_k\|_{H^{\xi}}|^{p/2}<\infty.
 \end{equation*}
Lastly we introduce the friction, $C:D(C)\subseteq L^2\rightarrow L^2$, as a linear densely defined closed operator commuting with $Q$ such that $0\in \rho(C)$. Therefore there exists $(\lambda_i)_{i\in \N}$ such that 
\begin{align*}
    Ce_k=\lambda_ke_k.
\end{align*}
We assume furthermore that there exists $\gamma\geq 0$ s.t.  
\begin{align}\label{Ass1onC}
&\sup_k\lambda_k=\lambda_1<0,\\
&\label{Ass2onC} -\langle w,Cw\rangle  \gtrsim \|w\|^2_{H^\gamma}.
\end{align}

\subsection{Rough path formulation}\label{sec_rough_formulation} 
In order to simplify the notation, now on we set $\nu=1.$ As already mentioned in \autoref{subsec_results}, following \cite{debussche2023rough,debussche2024second}, we decompose the small scale velocity as 
\begin{equation*}
    v^{\epsilon}=\epsilon^{-\alpha+\H}w^{\epsilon}+r^{\epsilon},
\end{equation*}
where $r^{\epsilon}$ is a remainder term and $w^{\epsilon}$ is a fractional Ornstein-Uhlenbeck (fOU). This decomposition will allow us to study the effect of the small scales $v^{\epsilon}$ on the large ones $u^{\epsilon}$ by investigating convergence due to the fOU and to the non-linear term on $v^{\epsilon}$ separately. In particular, we treat the system
\begin{equation*}
    \begin{cases}
        du^{\epsilon}= A u^{\epsilon}-b(\epsilon^{-\alpha+\H}w^{\epsilon},u^{\epsilon})+b(u^{\epsilon}+r^{\epsilon},u^{\epsilon})\,dt,\\
        dw^{\epsilon}=\epsilon^{-1}Cw^{\epsilon}\,dt+\epsilon^{- \mathcal{H}}dW^\mathcal{H}_t,\\
        \begin{aligned}dr^{\epsilon}&=\epsilon^{-1}Cr^{\epsilon}+ A(\epsilon^{-\alpha+\H}w^{\epsilon}+r^{\epsilon})+b(u^{\epsilon}+\epsilon^{-\alpha+\H}w^{\epsilon}+r^{\epsilon},\epsilon^{-\alpha+\H}w^{\epsilon}+r^{\epsilon})\,dt.
        \end{aligned}
    \end{cases}
\end{equation*}
Consider $y_t^{\epsilon}=\int_0^t\epsilon^{-\alpha+\H}w^{\epsilon}_r\,dr$. Then, 
\begin{equation*}
    \int_s^tb(\epsilon^{-\alpha+\H}w^{\epsilon}_r,u^{\epsilon}_r)\,dr=\int_s^tb(dy^{\epsilon}_r,u^{\epsilon}_r).
\end{equation*}

Intending to provide a description that can unify different levels of irregularity of FBMs and that can be easily generalized to $H>\frac{1}{4}$, while also accounting for the fact that $b$ may be unbounded, we interpret the integral in the sense of Davie’s formulation \cite{davie2008differential}. Indeed for $\frac1\H\in[N,N+1)\cap(1,3)$ and $p\in (\frac{1}{\H},N+1)$, 
consider the canonical  $p$-rough path $\Y^\epsilon$ associated with $y^\epsilon$ and let us define
\begin{equation*}
\int_s^tb(dy^\epsilon_r,u^\epsilon_r)=\sum_{k=1}^N\mathbb{A}^{k,\epsilon}\st u_s^\epsilon+u^{\natural,\epsilon}\st,
\end{equation*}
where $u^{\natural}\in C^{p/(N+1)-var}_{2,loc}(H^{-3}),$ while 
\begin{align}
    &\mathbb{A}^{1,\epsilon}\st\phi=b(\mathbb{Y}^{1,\epsilon}\st,\phi),\\
    &\mathbb{A}^{2,\epsilon}\st\phi=\int_s^tb(dy^\epsilon_r,b(y^\epsilon_{s,r},\phi))=b^{(2)}\left(\mathbb{Y}^{2,\epsilon}\st,\phi\right)
\end{align}
where $b^{(k)}$ is a bilinear operator on $(H^\xi)^{\otimes k}\times C^{\infty}(\T^d)$ uniquely defined by
\begin{equation*}
    b^{(k)}(f_1\otimes\dots\otimes,f_k,\phi)=b(f_k,b(f_{k-1},\dots,b(f_1,\phi))).
\end{equation*}
Notice that $(\mathbb{A}^{1,\epsilon},\dots,\mathbb{A}^{N,\epsilon})$ is a $p$ unbounded rough drivers, in the sense of \autoref{def:URD}, with respect to the scale $E_n=H^n$ (see \cite{debussche2023rough}).

We can finally define a solution to equation \eqref{system_prelimit}.
\begin{definition}\label{def_rough_sol}
    Let $\epsilon\in (0,1)$, $\frac1\H\in[N,N+1)\cap(1,3)$, $p\in (\frac{1}{\H},N+1)$ and assume $C,Q$ satisfy the condition of \autoref{sec_hp_noise}.\\
     A tuple $((\Omega^\epsilon, \mathcal{F}^\epsilon, (\mathcal{F}_t^\epsilon)_t,\mathbb{P}^{\epsilon}),W^{\epsilon}_t,u^\epsilon_t,r^\epsilon_t)$ is a probabilistically weak solution to equation \eqref{system_prelimit} if 
    \begin{enumerate}
        \item $(\Omega^\epsilon, \mathcal{F}^\epsilon, (\mathcal{F}_t^\epsilon)_t,\mathbb{P}^{\epsilon})$ is a stochastic basis with complete and right-continuous filtration,
        \item $W^{\epsilon,\H}_t$ is a fractional Wiener process of Hurst parameter $\H$ and space covariance $Q$ and it is adapted to $(\Omega^\epsilon, \mathcal{F}^\epsilon, \mathcal{F}_t^\epsilon,\mathbb{P}^{\epsilon})$,
        \item $r^\epsilon\in L^2([0,T],H^{\gamma})$ $\mathbb{P}^{\epsilon}$-a.s., it is progressively measurable and satisfies $\mathbb{P}^{\epsilon}$-a.s. 
       \begin{equation*}
       \begin{aligned}
    \partial_t r^{\epsilon}&=\epsilon^{-1}Cr^{\epsilon}+\nu \Delta(\epsilon^{-\beta}w^{\epsilon}+r^{\epsilon})\\&-b(u^{\epsilon}+\epsilon^{-\alpha+\H}w^{\epsilon}+r^{\epsilon},\epsilon^{-\alpha+\H}w^{\epsilon}+r^{\epsilon})
    \end{aligned}
       \end{equation*}
        in an analytically weak sense,
        \item $u^{\epsilon}\in L^{\infty}([0,T],L^2)\cap L^2([0,T],H^1)$ $\mathbb{P}^{\epsilon}$-a.s., it is progressively measurable and $\mathbb{P}^{\epsilon}$-a.s it holds, in a weak sense, 
        \begin{equation*}
            u^\epsilon\st=\int_s^t \Delta u^\epsilon_\theta-b(u_\theta^\epsilon+r^\epsilon_\theta,u^\epsilon_\theta)\,d\theta+\sum_{k=1}^N \mathbb{A}^{k,\epsilon}\st u_s^\epsilon+u^{\natural,\epsilon}\st,
        \end{equation*}
        where $u^{\natural}\st\in C^{p/(N+1)\var}_{2,loc}(H^{-3})$ $\mathbb{P}^{\epsilon}$-a.s.
    \end{enumerate}
    \end{definition}
    At last, we report the definition of the limiting equations, \eqref{eq_Hgreat12} and \eqref{eq_Hsmall12}.
    \begin{definition}\label{def_rough_lim_sol_Hgreat12}
        Let $\frac1\H\in[N,N+1)\cap(1,3)$, $p\in (\frac{1}{\H},N+1)$ and assume $C,Q$ satisfy the condition of \autoref{sec_hp_noise}.\\
     A tuple $((\Omega, \mathcal{F}, (\mathcal{F}_t)_t,\mathbb{P}),\W^{\H}_t,u_t)$ is a probabilistically weak solution to equation \eqref{eq_Hgreat12} if 
    \begin{enumerate}
        \item $(\Omega, \mathcal{F}, (\mathcal{F}_t)_t,\mathbb{P})$ is a stochastic basis with complete and right-continuous filtration,
        \item $\W^{\H}_t$ is a $p$-rough path on $H^{\xi}$ $\mathbb{P}$-a.s.,
        \item $\mathbb{W}^{\H,1}_t$ is a fractional Wiener process of Hurst parameter $\H$ and space covariance $(-C)^{-1}Q(-C)^{-1}$ and it is adapted to $(\mathcal{F}_t)_t$,
        \item $u\in C_w([0,T],L^2)\cap L^2([0,T],H^1)$ $\mathbb{P}$-a.s., it is progressively measurable and $\mathbb{P}$-a.s it holds, in a weak sense, 
        \begin{equation*}
            u\st=\int_s^t \Delta u_r-b(u_r,u_r)\,dr+\sum_{k=1}^N \mathbb{A}^{k}\st u_s+u^{\natural}\st,
        \end{equation*}
        where $u^{\natural}\st\in C^{p/(N+1)\var}_{2,loc}(H^{-3})$ $\mathbb{P}$-a.s., and $(\A^1,\A^2)$ is defined by 
        \begin{equation}\label{def_limitURD}
            \A^{1}\st\phi=b(W^\H\st,\phi),\quad \A^2\st\phi=b^{(2)}(\mathbb{W}^{\H,2}\st,\phi).
        \end{equation}
    \end{enumerate}
    \end{definition}
    \begin{definition}\label{def_rough_lim_sol_Hsmall12}
    A tuple $((\Omega, \mathcal{F}, (\mathcal{F}_t)_t,\mathbb{P}),u_t)$ is a weak solution to equation \eqref{eq_Hsmall12} if 
    \begin{enumerate}
        \item $(\Omega, \mathcal{F}, (\mathcal{F}_t)_t,\mathbb{P})$ is a stochastic basis with complete and right-continuous filtration,
        
        \item $u\in C_w([0,T],L^2)\cap L^2([0,T],H^1)$ $\mathbb{P}$-a.s.,  it is progressively measurable and $\mathbb{P}$-a.s it holds, in a weak sense, 
        \begin{equation*}
            u\st=\int_s^t \Delta u^\epsilon_\theta-b(u_\theta+\overline{r},u_\theta)\,d\theta,
        \end{equation*}
        where $\overline{r}\in H$.
    \end{enumerate}
    \end{definition}

\begin{remark}
    In \cite{li2026navierstokesfractionaltransportnoise}, it is shown that, for $\H<1/2$, the rough integral can be properly built once the equation is tested against sufficiently regular test functions. This suggests a slightly different notion of solution, where the rough integral is not interpreted in the sense of Davie and, in particular, through the URD formalism. Nevertheless, the authors show that the two notions of solution are equivalent. We have chosen to stick with Davie's formulation to treat simultaneously $\H<1/2$ and $\H>1/2$, without the need to specify whether we rely on Young or rough integration.
\end{remark}
\section{Long-Time Behavior of Quadratic Functionals of Fractional Stochastic Convolutions}\label{sec:stoc_conv}
In this section, we present some convergence results for quadratic functionals of the fractional Ornstein--Uhlenbeck process. We start in \autoref{rough_convergence_lemma} with the one-dimensional case. Later, in \autoref{lem_ito_stokes}, we extend \autoref{rough_convergence_lemma} to the convergence of \( (-C)^{-1} b(w, w) \), where \( b \) denotes the Navier--Stokes nonlinearity, \( w \) is an infinite-dimensional fractional Ornstein--Uhlenbeck process with infinitesimal generator \( C \), and driving noise with spatial covariance \( Q \), commuting with \( C \). As will be shown below (see the proof of \autoref{lem_ito_stokes}), this case can be easily reduced to the one-dimensional setting. For this reason, and in order to simplify the exposition of a proof that is already rather involved, we begin by treating the simplest case and discuss the necessary adaptations afterward.
\subsection{One-dimensional case}
Let $B^\H_t$ be a real fractional Brownian motion of Hurst parameter $\H\in (\frac{1}{4},1)$ and $X_t$ the unique solution of the Young differential equation
\begin{align}
    \begin{cases}
    d X_t&=-\lambda X_t dt+\sigma dB^\H_t\\
    X_0&=x_0
    \end{cases}
\end{align}
given by $X_t=e^{-\lambda t}x_0+\sigma\int_0^t e^{-\lambda(t-s)}dB^\H_s$ and $\overline{\sigma}$ the quantity \begin{align}\label{limit_variance}
\overline{\sigma}^2=\frac{\sigma^2\lambda}{2}\int_0^{+\infty} e^{-\lambda s}s^{2\H}ds=\frac{\sigma^2}{2\lambda^{2\H}}\int_0^{+\infty} e^{- s}s^{2\H}ds
\end{align}
for $\sigma,\lambda>0.$ The following holds

\begin{lemma}\label{rough_convergence_lemma}
   Let $\H\in (\frac{1}{4},1)$ and $\overline{\sigma}^2$ as defined in \eqref{limit_variance}, then
\begin{align}\label{eq:secondMomentOU}
   \expt{X_t^2}&=e^{-2\lambda t}|x_0|^2\notag\\
    &+\sigma^2\H \left(\int_0^t  e^{-\lambda z}\| z\|^{2\H-1} dz + e^{-\lambda t}\int_0^t e^{-\lambda (t-z)} \| z\|^{2\H-1} dz\right),
\end{align}
\begin{align}\label{rough_exponential_convergence_mean}
    \| \expt{X_t^2}-\overline{\sigma}^2\| \lesssim e^{-2\lambda t}\| x_0\|^2+ \sigma^2 \frac{1}{\lambda^{2\H}} e^{-\frac{\lambda}{2}t},
   \end{align}
   \begin{align}\label{MomentOntheSup}
       \EE\sx[\sup_{t\in [0,T]}X_t^2\dx]&\lesssim x_0^2+\frac{\sigma^2}{\lambda^{2\H}}\log\sx(e+T\sigma^{\frac{1}{\H}}(\lambda^{\frac{1-\H}{\H}}\vee1)\dx)\notag\\
       &+\sx(\int_0^ {\frac{\sigma}{\lambda^\H}}\log^{\frac12}(1+\epsilon^{-\frac{1}{\H}})d\epsilon\dx)^2\notag\\
       &+\frac{\sigma}{\lambda^\H}\sx(\int_0^ {\frac{\sigma}{\lambda^\H}}\log^{\frac12}(1+\epsilon^{-\frac{1}{\H}})d\epsilon+1\dx).
   \end{align}
   Moreover if $\H\in (\frac{1}{4},\frac{1}{2}]$ it holds
\begin{align}\label{rough_power_convergence}
    \expt{\|\frac{1}{t}\int_0^t X_s^2 ds-\overline{\sigma}^2\|^2}&\lesssim \frac{\overline{\sigma}^2}{\lambda t}\left(\frac{\sigma^2}{\lambda^{2\H}}+\| x_0\|^2\right) +\frac{1}{\lambda^2 t^2}\| x_0\|^4\notag\\ &+\frac{1}{\lambda t^2}\left(\overline{\sigma}^2 t+\sigma^2+\frac{\sigma^2}{\lambda^{2\H+1}}\right)\\ & +\frac{1}{\lambda^2 t^2}\left(\sigma^2+\frac{\sigma^2}{\lambda^{2\H}}\right)^2+\frac{\overline{\sigma}^2}{\lambda t }\left(\sigma^2+\frac{\sigma^2}{\lambda^{2\H}}\right)+ \frac{\sigma^4}{\lambda^{4\H+1}  t}\notag \\ & +\mathcal{R}_{\H}(t)\notag,
   \end{align}
   where 
   \begin{align*}
       \frac{\mathcal{R}_{\H}(t)}{\sigma^4}& \lesssim \frac{1}{\lambda^{4\H+2}t^2}+\frac{ 1}{\lambda^3t^{3-4\H}}+\frac{1}{\lambda^{4\H+1}t}+\frac{1}{\lambda^{4\H}t}\\ &+\frac{1}{\lambda^4t^{1-\theta}}+\frac{1}{\lambda^{1+4\H-\theta}t^{1-\theta}}+\frac{1}{\lambda^4t^{4-4\H}}+\frac{1}{\lambda^{4\H-\theta}t^{2-\theta}}\quad &\text{if } \H\in \left(\frac{1}{4},\frac{1}{2}\right)
   \end{align*}
   for each $\theta\in (0,1),$ while
   \begin{align*}
       \mathcal{R}_{1/2}(t)&\equiv 0.
   \end{align*}
\end{lemma}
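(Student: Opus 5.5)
Since $X$ is Gaussian, everything reduces to explicit computations with its covariance. Write $X_t=e^{-\lambda t}x_0+Z_t$ with $Z_t=\sigma\int_0^te^{-\lambda(t-s)}dB^\H_s$ centered Gaussian. The cross term vanishes in mean, so $\expt{X_t^2}=e^{-2\lambda t}x_0^2+\expt{Z_t^2}$. For $\expt{Z_t^2}$ we integrate by parts in the Young integral, $Z_t=\sigma B^\H_t-\sigma\lambda\int_0^te^{-\lambda(t-s)}B^\H_s\,ds$, which is valid for every $\H\in(\frac14,1)$ since $B^\H$ is Hölder and the kernel is smooth. Using $\expt{B_sB_r}=\frac12(s^{2\H}+r^{2\H}-|s-r|^{2\H})$ and integrating by parts once more in the variables $s,r$, the double integral collapses to the one-dimensional integrals in \eqref{eq:secondMomentOU}. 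An alternative is the formula $\expt{(\int f\,dB^\H)^2}=-\frac12\iint f'\!f'\ldots$; the cleanest route is to differentiate $t\mapsto\expt{Z_t^2}$, giving $\frac{d}{dt}\expt{Z_t^2}=-2\lambda\expt{Z_t^2}+2\sigma^2\H\,\partial$-terms, and to solve this ODE. Letting $t\to\infty$ in \eqref{eq:secondMomentOU} gives the limit $\sigma^2\H\int_0^\infty e^{-\lambda z}z^{2\H-1}dz$, which equals $\overline\sigma^2$ after one integration by parts. The error then consists of the tail $\sigma^2\H\int_t^\infty e^{-\lambda z}z^{2\H-1}dz$ and the term $e^{-\lambda t}\int_0^te^{-\lambda(t-z)}z^{2\H-1}dz$. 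Both are bounded by $\sigma^2\lambda^{-2\H}e^{-\lambda t/2}$ after the rescaling $z\mapsto z/\lambda$, splitting $e^{-\lambda z}\le e^{-\lambda t/2}e^{-\lambda z/2}$ on $z\ge t$. This proves \eqref{rough_exponential_convergence_mean}.

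For \eqref{MomentOntheSup} we bound $\sup X_t^2\le 2x_0^2+2\sup Z_t^2$ and apply Dudley's entropy bound to $Z$. The canonical metric satisfies $d(s,t)=\expt{|Z_t-Z_s|^2}^{1/2}\lesssim \min\bigl(\sigma|t-s|^\H,\ \sigma\lambda^{-\H}\bigr)$. The first bound holds by the stationary-increment-like structure of the Young integral; the second follows from the variance bound just proved. The diameter is $D\simeq\sigma\lambda^{-\H}$, and the covering numbers of $[0,T]$ at scale $\epsilon$ are $N(\epsilon)\lesssim 1+T(\sigma/\epsilon)^{1/\H}$. Dudley gives $\expt{\sup Z}\lesssim\int_0^D\log^{1/2}N(\epsilon)\,d\epsilon$. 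Splitting $\log(1+T\sigma^{1/\H}\epsilon^{-1/\H})\le\log(e+T\sigma^{1/\H}\ldots)+\log(1+\epsilon^{-1/\H})$ produces the logarithmic term times $D$ and the entropy integral. To pass to the second moment we use Borell–TIS, or equivalently $\expt{\sup Z^2}\lesssim(\expt{\sup|Z|})^2+D^2+D\,\expt{\sup|Z|}$. This yields exactly the squared-integral and $\frac{\sigma}{\lambda^\H}(\cdots+1)$ terms; the factor $\lambda^{(1-\H)/\H}\vee1$ comes from tracking the cutoff at which the two metric bounds cross.

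For \eqref{rough_power_convergence} we expand
\[
\expt{\Bigl(\tfrac1t\!\int_0^tX_s^2ds-\overline\sigma^2\Bigr)^2}=\tfrac1{t^2}\!\iint\bigl(\expt{X_s^2X_r^2}-\overline\sigma^2\expt{X_s^2}-\overline\sigma^2\expt{X_r^2}+\overline\sigma^4\bigr),
\]
and use Wick's formula: $\expt{Z_s^2Z_r^2}=\expt{Z_s^2}\expt{Z_r^2}+2R_Z(s,r)^2$. The terms involving $x_0$ are handled by Cauchy–Schwarz and produce the $\|x_0\|^2,\|x_0\|^4$ contributions. The products of variances minus $\overline\sigma^2$ are controlled by \eqref{rough_exponential_convergence_mean}, integrated in time, which gives the $\frac1{\lambda t}$ and $\frac1{\lambda^2t^2}$ terms. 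The main obstacle is the genuinely non-Markovian term $\frac2{t^2}\iint R_Z(s,r)^2\,ds\,dr$. For $\H=\frac12$ we have $R_Z(s,r)\le\frac{\sigma^2}{2\lambda}e^{-\lambda|s-r|}$, which gives $O(\sigma^4/(\lambda^{3}t))$ and no remainder. For $\H<\frac12$ we decompose $R_Z(s,r)$ into an exponentially decaying part and a polynomially decaying part coming from the long-memory tail of fractional Gaussian noise. The latter is estimated by $\sigma^2\lambda^{-2}|s-r|^{2\H-2}$ for $|s-r|\gtrsim\lambda^{-1}$, which follows from two integrations by parts in the covariance of $dB^\H$. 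Squaring and integrating over $[0,t]^2$ then yields the terms $t^{4\H-3}$ and $t^{4\H-4}$. The interpolation parameter $\theta$ enters because we bound the mixed products via $\min(a,b)\le a^\theta b^{1-\theta}$, avoiding a logarithmic divergence at $\H=\frac14$-type borderline exponents. The assumption $\H\le\frac12$ is what makes $|s-r|^{4\H-4}$ integrable at infinity, giving the $1/t$ decay; for $\H>\frac34$ it would fail.
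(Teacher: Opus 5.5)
Your treatment of \eqref{eq:secondMomentOU}, \eqref{rough_exponential_convergence_mean} and \eqref{MomentOntheSup} is essentially the paper's: integration by parts in the Young integral, explicit covariance computation, then Dudley plus Borell--TIS.

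There is one small difference there. Your increment bound $\mathbb{E}|Z_t-Z_s|^2\lesssim\sigma^2|t-s|^{2\H}$ is uniform in $\lambda$; it follows from $Z_t-Z_s=(e^{-\lambda(t-s)}-1)Z_s+\sigma\int_s^t e^{-\lambda(t-u)}dB^\H_u$. So the factor $\lambda^{\frac{1-\H}{\H}}\vee 1$ does not come from a crossing of two metric bounds, as you say. It is simply superfluous: the paper gets it from a cruder Jensen estimate, and it is harmless by monotonicity of the logarithm.

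For \eqref{rough_power_convergence} you take a genuinely different route. The paper starts from $R_Z/\sigma^2=A_1+A_2+A_3$ and rewrites $A_3=H_1+H_2$. It then estimates $\iint A_3^2$ over the regions $K_{i,j}$, with further case splits. The parameter $\theta$ and the extra, slower terms in $\mathcal{R}_\H$ (for instance $\lambda^{-4}t^{\theta-1}$) are artifacts of those case-by-case estimates.

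You instead use a pointwise bound $|R_Z(s,r)|\lesssim\sigma^2\min(\lambda^{-2\H},\lambda^{-2}|s-r|^{2\H-2})$. It immediately gives $\frac{1}{t^2}\iint R_Z^2\lesssim\frac{\sigma^4}{t}\int_0^\infty\min(\lambda^{-4\H},\lambda^{-4}h^{4\H-4})\,dh\lesssim\frac{\sigma^4}{\lambda^{4\H+1}t}$. This is sharper than the stated bound and implies it. It needs no $\theta$, and it only uses $\H<\frac34$, not $\H\le\frac12$ or $\H>\frac14$. So your remarks that $\theta$ avoids a logarithmic divergence and that $\H\le\frac12$ is what makes $|s-r|^{4\H-4}$ integrable at infinity are both off; the $t^{4\H-3}$ and $t^{4\H-4}$ contributions are just lower order.

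The price is that you only assert the tail bound, and "two integrations by parts in the covariance of $dB^\H$" is delicate for $\H<\frac12$, since $|u-v|^{2\H-2}$ is not locally integrable on the diagonal. The clean derivation, for $r<s$, starts from $Z_s=e^{-\lambda(s-r)}Z_r+\sigma\int_r^s e^{-\lambda(s-u)}dB^\H_u$. This gives $R_Z(s,r)=e^{-\lambda(s-r)}\mathbb{E}[Z_r^2]+\sigma^2\H(2\H-1)\int_r^s\int_0^r e^{-\lambda(s-u)}e^{-\lambda(r-v)}(u-v)^{2\H-2}\,dv\,du$. The kernel is integrable here because the two intervals are disjoint. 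The first term is the paper's $A_1+A_2$ and the second is exactly $A_3$.

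To finish, extend the $v$-integral to $(-\infty,r]$, which only enlarges it since the integrand has fixed sign. Use $\int_0^\infty e^{-\lambda a}(a+b)^{2\H-2}\,da\le\min\bigl(b^{2\H-2}/\lambda,\;b^{2\H-1}/(1-2\H)\bigr)$ and split $[r,s]$ at its midpoint. This yields $|A_3|\lesssim\lambda^{-2\H}e^{-\lambda(s-r)/2}+\lambda^{-2}(s-r)^{2\H-2}$, and $|A_3|\lesssim\lambda^{-2\H}$ always.

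What each approach buys: the paper never needs a pointwise decay estimate and stays inside the explicit formula \eqref{convergence_quadratic_step_4}, at the cost of length and suboptimal rates. Yours, once the tail bound is written out as above, is short, gives the optimal $1/t$ rate, and covers a wider range of $\H$.
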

\begin{proof}
Let
\begin{align*}
    R(s,t)=\expt{B^\H_s B^\H_t}=\frac12\left(t^{2\H}+s^{2\H}-|t-s|^{2\H}\right)
\end{align*}
and $R(t)=R(t,t)$.
Since $X_t=e^{-\lambda t}x_0+\sigma B^\H_t-\sigma \lambda \int_0^t e^{-\lambda(t-s) }B^\H_s ds$, then we have
\begin{align*}
    \expt{X_t^2}&=e^{-2\lambda t}\| x_0\|^2+\sigma^2 R(t)-2\sigma^2\lambda\int_0^t  e^{-\lambda(t-s)}R(s,t) ds\\ & +2\sigma^2\lambda^2\int_0^t\int_0^s e^{-\lambda(t-s)} e^{-\lambda(t-r)}R(s,r) dr ds .
\end{align*}
Integrating by parts in $s$ we have
\begin{align*}
   \lambda\int_0^t\int_0^s e^{-\lambda(t-s)} e^{-\lambda(t-r)}R(s,r) dr ds&=\int_0^t e^{-\lambda(t-r)} R(t,r)dr\\ &- \int_0^t e^{-2\lambda(t-s)} R(s) ds \\ & -\int_0^t \int_0^s e^{-\lambda(t-s)}e^{-\lambda(t-r)}\partial_1 R(s,r) dr ds.
\end{align*}
Inserting this expression in the formula above
\begin{align*}
 \expt{X_t^2}&= e^{-2\lambda t}\| x_0\|^2+\sigma^2 R(t)-2\sigma^2\lambda \int_0^t e^{-2\lambda(t-s)} R(s) ds\\ & -2\sigma^2 \lambda \int_0^t \int_0^s e^{-\lambda(t-s)}e^{-\lambda(t-r)}\partial_1 R(s,r) dr ds.  
\end{align*}
Moreover, integrating by parts in $s$
\begin{align*}
\sigma^2\left(R(t)-2\lambda \int_0^t e^{-2\lambda(t-s)} R(s) ds \right)&=\sigma^2\int_0^t e^{-2\lambda(t-s)}\partial_1 R(s)ds.   
\end{align*}
In order to prove \eqref{eq:secondMomentOU} and \eqref{rough_exponential_convergence_mean} we are left to study
\begin{align*}
 &\int_0^t e^{-2\lambda(t-s)}\partial_1 R(s)ds-   2 \lambda \int_0^t \int_0^s e^{-\lambda(t-s)}e^{-\lambda(t-r)}\partial_1 R(s,r) dr ds\\ &= 2\H\int_0^t e^{-2\lambda(t-s)}\| s\|^{2\H-1} ds\\
 &-2\H\lambda  \int_0^t \int_0^s e^{-\lambda(t-s)}e^{-\lambda(t-r)}\left(\| s\|^{2\H-1}-\| s-r\|^{2\H-1}\right) dr ds\\ & =2\H \left(e^{-\lambda t}\int_0^t  e^{-\lambda(t-s)}\| s\|^{2\H-1} ds +\lambda \int_0^t \int_0^s e^{-\lambda(t-s)}e^{-\lambda(t-r)}\| s-r\|^{2\H-1} dr ds\right)\\ & =2\H \left(e^{-\lambda t}\int_0^t  e^{-\lambda(t-s)}\| s\|^{2\H-1} ds +\lambda \int_0^t \int_z^t e^{-2\lambda(t-s)}e^{-\lambda z}\| z\|^{2\H-1} ds dz\right)\\ & =\H \left(\int_0^t  e^{-\lambda z}\| z\|^{2\H-1} dz + e^{-\lambda t}\int_0^t e^{-\lambda (t-z)} \| z\|^{2\H-1} dz\right).
\end{align*}
In conclusion we have shown \eqref{eq:secondMomentOU}. Moreover, 
\begin{align*}
    \| \expt{X_t^2}-\overline{\sigma}^2\| & \lesssim e^{-2\lambda t}\| x_0\|^2+\sigma^2 e^{-\lambda t}\| t\|^{2\H}+\sigma^2 \lambda \int_t^{+\infty}  e^{-\lambda z}\| z\|^{2\H} dz\\ & \lesssim e^{-2\lambda t}\| x_0\|^2+  \frac{\sigma^2}{\lambda^{2\H}} e^{-\frac{\lambda}{2}t}.
\end{align*}
which is relation \eqref{rough_exponential_convergence_mean}.

Regarding \eqref{MomentOntheSup}, let $M=\sup_{t\in[0,T]}X_t$ and assume, without loss of generality, that $x_0=0$. Then,  \eqref{rough_exponential_convergence_mean}
imply that $\sup_{t\in [0,T]}\EE[X_t^2]\lesssim\frac{\sigma^2}{\lambda^{2\H}}$ and, thanks to Jensen's inequality, that 
\begin{equation*}
    \EE\sx[(X\st)^2\dx]\lesssim \lambda^2(t-s)\int_s^t\EE[X_u^2]du+\sigma^2(t-s)^{2\H}\lesssim
\sigma^2(\lambda^{2-2\H}\vee1)(t-s)^{2\H}
\end{equation*}
Therefore, Dudley's theorem \cite[Theorem 10.1] {lifshits2012lectures} implies that
\begin{align*}
 \EE\sx[M\dx]&\lesssim \int_0^{\frac{\sigma}{\lambda^\H}}\log^{\frac12}\sx(1+\frac{T\sigma^{\frac{1}{\H}}(\lambda^{\frac{1-\H}{\H}}\vee1)}{\epsilon^{\frac1\H}}\dx)d\epsilon\\
&\lesssim\frac{\sigma}{\lambda^\H}\log^{\frac12}\sx(1+T\sigma^{\frac{1}{\H}}(\lambda^{\frac{1-\H}{\H}}\vee1)\dx)+\int_0^ {\frac{\sigma}{\lambda^\H}}\log^{\frac12}(1+\epsilon^{-\frac{1}{\H}})d\epsilon.   
\end{align*}
Let $m:=\EE\sx[M\dx]$. By Borell's inequality \cite[Theorem 2.1]{adler1990introduction}, we have
\begin{align*}
        \EE\sx[M^2\dx]&\lesssim m^2+\int_0^\infty (u+m)\mathbb{P}\sx(M>u+m\dx)du\\
        &\lesssim m^2+\int_0^{\infty}(u+m)\exp\sx(-\frac{u^2\lambda^{2\H}}{2\sigma^2}\dx)du\\
        &\lesssim m^2+\frac{\sigma}{\lambda^H}(m+1).
\end{align*}
Concerning \eqref{rough_power_convergence}, just expanding the square and rearranging we have thanks to \eqref{rough_exponential_convergence_mean}
    \begin{multline}\label{convergence_quadratic_step_1}
    \expt{\|\frac{1}{t}\int_0^t X_s^2 ds-\overline{\sigma}^2\|^2}=\frac{1}{t^2}\int_0^t \int_0^t (\expt{X_s^2 X_r^2}-\overline{\sigma}^4) ds dr \\ -\frac{2\overline{\sigma}^2}{t}\int_0^t (\expt{X_s^2}-\overline{\sigma}^2)ds  \\  \lesssim \frac{1}{t^2}\int_0^t \int_0^t (\expt{X_s^2 X_r^2}-\overline{\sigma}^4) ds dr+\frac{\overline{\sigma}^2}{\lambda t}\left(\frac{\sigma^2}{\lambda^{2\H}}+\| x_0\|^2\right).
    \end{multline}
    Therefore the claim reduces to study the long time behaviour of the first term in the formula above. Let $B^{\sigma,\lambda}_t:=\sigma\int_0^t e^{-\lambda(t-s)}dB^\H_s$, by Isserlis's theorem \cite{isserlis1918formula} we have
    \begin{align}\label{convergence_quadratic_step_1dot5}
    \expt{X_s^2 X_r^2}&=\expt{\| B^{\sigma,\lambda}_s\|^2} \expt{\| B^{\sigma,\lambda}_r\|^2}+2\expt{B^{\sigma,\lambda}_s B^{\sigma,\lambda}_r}^2 +\| x_0\|^4 e^{-2\lambda(s+r)}\notag\\ &+\| x_0\|^2\left(e^{-2\lambda s}\expt{\| B^{\sigma,\lambda}_r\|^2}+e^{-2\lambda r}\expt{\| B^{\sigma,\lambda}_s\|^2}+4  e^{-\lambda (s+r)} \expt{B^{\sigma,\lambda}_sB^{\sigma,\lambda}_r}\right)\notag\\ & =J_1(s,r)+2J_2(s,r)^2+J_3(s,r)+J_4(s,r).  
    \end{align}
    Therefore, by Young's inequality and \eqref{rough_exponential_convergence_mean}
    \begin{align}\label{convergence_quadratic_step_2}
        \frac{1}{t^2}\int_0^t \int_0^t (\expt{X_s^2 X_r^2}-\overline{\sigma}^4) ds dr&\lesssim \frac{1}{t^2} \int_0^t \int_0^t  (J_1(s,r)-\overline{\sigma}^4) ds dr\notag\\ & +\frac{1}{t^2} \int_0^t \int_0^s  J_2(s,r)^2 ds dr\notag\\ & +\frac{1}{\lambda^2 t^2}\| x_0\|^4+\frac{1}{\lambda t^2}\left(\overline{\sigma}^2 t+\sigma^2+\frac{\sigma^2}{\lambda^{2\H+1}}\right).
    \end{align}
    We are left to treat the first two lines of the equation above. Concerning the first one, as a consequence of \eqref{rough_exponential_convergence_mean}
    \begin{align*}
        \| J_1(s,r)-\overline{\sigma}^4\| & \lesssim e^{-\frac{\lambda}{2}(s+r)}\left(\sigma^2+\frac{\sigma^2}{\lambda^{2\H}}\right)^2+\overline{\sigma}^2\left(\sigma^2+\frac{\sigma^2}{\lambda^{2\H}}\right)(e^{-\frac{\lambda}{2}s}+e^{-\frac{\lambda}{2}r}).
    \end{align*}
    Therefore
    \begin{align}\label{convergence_quadratic_step_3}
        \frac{1}{t^2} \int_0^t \int_0^t  (J_1(s,r)-\overline{\sigma}^4) ds dr& \lesssim \frac{1}{\lambda^2 t^2}\left(\sigma^2+\frac{\sigma^2}{\lambda^{2\H}}\right)^2+\frac{\overline{\sigma}^2}{\lambda t }\left(\sigma^2+\frac{\sigma^2}{\lambda^{2\H}}\right).
    \end{align}
    The analysis of $\frac{1}{t^2}\int_0^t\int_0^s (J_2(s,r))^2 ds dr$ is the trickiest one. Let us write $\expt{B_s^{\sigma,\lambda}B^{\sigma,\lambda}_r}$ better. Note that by symmetry, we restricted to the case $s\geq r.$ Then
    \begin{align*}
    \frac{\expt{B_s^{\sigma,\lambda}B^{\sigma,\lambda}_r}}{\sigma^2}&=-\lambda \int_0^r e^{-\lambda(r-r')}R(r',s) dr' -\lambda \int_0^s e^{-\lambda(s-s')}R(r,s') ds'   \\ &+R(r,s) +\lambda^2 \int_0^s \int_0^r e^{-\lambda(s-s')}e^{-\lambda(r-r')}R(r',s')dr' ds'. 
    \end{align*}
    Integrating the last term by parts in $r'$ we get
    \begin{align*}
        \lambda^2 \int_0^s \int_0^r &e^{-\lambda(s-s')}e^{-\lambda(r-r')}R(r',s')dr' ds'=\\ &\lambda \int_0^s  e^{-\lambda(s-s')}R(r,s') ds' -\lambda \int_0^s \int_0^r e^{-\lambda(s-s')}e^{-\lambda(r-r')}\partial_1 R(r',s')dr' ds'.
    \end{align*}
Similarly we also have 
\begin{align*}
  R(r,s)-\lambda \int_0^r e^{-\lambda(r-r')}R(r',s) dr'&=\int_0^r e^{-\lambda(r-r')}\partial_1 R(r',s) dr'.  
\end{align*}
    
    Consequently, by symmetry of the domain of integration,
    \begin{align}\label{convergence_quadratic_step_4}
    \frac{\expt{B_s^{\sigma,\lambda}B^{\sigma,\lambda}_r}}{\sigma^2}&=\H\left(\int_0^r e^{-\lambda(r-r')} \left(\| r'\|^{2\H-1}+\| s-r'\|^{2\H-1}\right)dr'\right.\notag\\
    & \left.-\lambda \int_0^s \int_0^r e^{-\lambda(s-s')}e^{-\lambda(r-r')}\left(\| r'\|^{2\H-1}-\| s'-r'\|^{2\H-2}(r'-s')\right)dr' ds'\right)\notag\\ &=\H\left(\int_0^r \left(e^{-\lambda(r-r')} \| s-r'\|^{2\H-1}+e^{-\lambda s}e^{-\lambda (r-r')}\| r'\|^{2\H-1}\right) dr'\right.\notag\\ & \left.-\lambda \int_r^s\int_0^r e^{-\lambda(s-s')}e^{-\lambda(r-r')}\| s'-r'\|^{2\H-1} dr' ds' \right) \notag \\ & =\H e^{-\lambda s}\int_0^r e^{-\lambda (r-r')}\| r'\|^{2\H-1} dr'+\H \int_0^r e^{-\lambda (s-r')}\| r-r'\|^{2\H-1}dr'\notag\\ & +\H (2\H-1)\int_0^r \int_r^s e^{-\lambda(s-s')}e^{-\lambda(r-r')}\| s'-r'\|^{2\H-2} ds' dr'\notag\\ &=A_1(r,s)+A_2(r,s)+A_3(r,s).
    \end{align}
    
    Note that the $A_1, A_2>0$ and the last one, $A_3, $ is identically $0$ if and only if $\H=\frac{1}{2}$.
    Moreover, by Young's inequality,
\begin{align}\label{convergence_quadratic_step_5}
    \frac{1}{t^2} \int_0^t \int_0^s  J_2(s,r)^2 dr ds & \lesssim \frac{\sigma^4}{t^2} \sum_{i=1}^3 \int_0^t \int_0^s  A_i(r,s)^2 dr ds.
\end{align}
    Since $\H\in (1/4,1/2]$, then
    \begin{align*}
        A^2_1(r,s)+A^2_2(r,s)& \lesssim  e^{-2\lambda s}\| r\|^{4\H}+e^{-2\lambda(s-r)}\lambda^{-4\H}.
    \end{align*}
    Therefore,
    \begin{align}\label{convergence_quadratic_step_6}
        \frac{\sigma^4}{t^2} \sum_{i=1}^2 \int_0^t \int_0^s  A_i(r,s)^2 dr ds& \lesssim 
            \frac{\sigma^4}{\lambda^{4\H+1}  t}.
    \end{align}
Combining \eqref{convergence_quadratic_step_1}, \eqref{convergence_quadratic_step_1dot5}, \eqref{convergence_quadratic_step_2}, \eqref{convergence_quadratic_step_3}, \eqref{convergence_quadratic_step_4}, \eqref{convergence_quadratic_step_5}, \eqref{convergence_quadratic_step_6}, so far we showed 
\begin{align}\label{eq:intermediate_inequality_final}
    \expt{\|\frac{1}{t}\int_0^t X_s^2 ds-\overline{\sigma}^2\|^2} &\lesssim \frac{\overline{\sigma}^2}{\lambda t}\left(\frac{\sigma^2}{\lambda^{2\H}}+\| x_0\|^2\right) +\frac{1}{\lambda^2 t^2}\| x_0\|^4\\ &+\frac{1}{\lambda t^2}\left(\overline{\sigma}^2 t+\sigma^2+\frac{\sigma^2}{\lambda^{2\H+1}}\right)\notag\\ & +\frac{1}{\lambda^2 t^2}\left(\sigma^2+\frac{\sigma^2}{\lambda^{2\H}}\right)^2+\frac{\overline{\sigma}^2}{\lambda t }\left(\sigma^2+\frac{\sigma^2}{\lambda^{2\H}}\right)\notag\\&+ \frac{\sigma^4}{\lambda^{4\H+1}  t}  +\frac{\sigma^4}{t^2} \int_0^t \int_0^s  A_3(r,s)^2 dr ds\notag.
\end{align}
We are left to analyze the term related to $A_3$, recalling that it is identically $0$ in case of $\H=\frac{1}{2}.$ Fubini's theorem and the change of variables $z=s'-r'$, allow us to rewrite $A_3(r,s)$ as \begin{align*}
   \frac{1}{\H(2\H-1)} A_3(r,s)&=\int_0^r\int_{r-z}^{r\wedge(s-z)}e^{-\lambda(r-2r')}e^{-\lambda(s-z)}\| z\|^{2\H-2}dr' dz\\ &+\int_r^s\int_{0}^{r\wedge(s-z)} e^{-\lambda(r-2r')}e^{-\lambda(s-z)}\| z\|^{2\H-2} dr' dz\\ & =\frac{1}{2\lambda}\int_0^r e^{-\lambda (s-z)}\| z\|^{2\H-2}\left(e^{-\lambda(r-2(r\wedge(s-z)))}-e^{-\lambda(2z-r)}\right) dz\\ & +\frac{1}{2\lambda}\int_r^s e^{-\lambda (s-z)}\| z\|^{2\H-2}\left(e^{-\lambda(r-2(r\wedge(s-z)))}-e^{-\lambda r}\right) dz\\
   &=H_1(r,s)+H_2(r,s).
\end{align*}
Therefore by Fubini's theorem, splitting the domain properly and Young's inequality again, it is enough to estimate
\begin{align*}
    &\int_0^{t/2}\int_r^{2r}H_1^2(r,s)+H_2^2(r,s) ds dr+\int_0^{t/2}\int_{2r}^{t}H_1^2(r,s)+H_2^2(r,s) ds dr\\ &+\int_{t/2}^{t}\int_r^{t}H_1^2(r,s)+H_2^2(r,s) ds dr=\sum_{i=1}^3\sum_{j=1}^2 K_{i,j}(t).
\end{align*}
By this splitting of the domain, we immediately observe that \begin{itemize}
    \item $s-z\leq r$ on the integration domain of $K_{1,2}$.
    \item $r\leq s-z$ on the integration domain of $K_{2,1}$.
    \item $s-z\leq r$ on the integration domain of $K_{3,2}$.
\end{itemize}
Therefore, these terms can be analyzed more easily.
An integration by parts and standard estimate yield
\begin{align*}
   \frac{e^{-\lambda r}}{2\lambda}\int_r^s \| z\|^{2\H-2}\left(e^{-\lambda (z-s)}-e^{-\lambda (s-z)}\right) dz \lesssim \frac{\| r\|^{2\H-1}}{\lambda}e^{-\lambda r}(1+e^{-\lambda (r-s)}).
\end{align*}
The estimate above implies
\begin{align}\label{final_K_12}
    K_{1,2}(t)& \lesssim \frac{1}{\lambda^{4\H+2}}+\frac{t^{4\H-1}}{\lambda^3}.
\end{align}
Similar argument applies also to $K_{3,2}$ giving analogous estimates, we omit the easy details.
Regarding $K_{2,1}$, we have that for $r\le 1/\lambda$
\begin{equation*}
        H_1(r,s)
        \lesssim e^{\lambda(r-s)}\int_0^rz^{2\H-1}dz\lesssim \lambda^{-2\H} e^{\lambda(r-s)}.
\end{equation*}
Instead for $r>1/\lambda$, it holds
\begin{align*}
    H_1(r,s)&\lesssim\lambda^{-2\H}e^{\lambda(r-s)}+\frac{e^{\lambda(r-s)}}{2\lambda}\int_{1/\lambda}^re^{\lambda z}z^{2\H-2}dz\\
    &\lesssim\lambda^{-2\H}e^{\lambda(r-s)}+\lambda^{-2\H}e^{\lambda(2r-s)}.
\end{align*}
Therefore, 
\begin{equation}\label{final_K_21}
    K_{2,1}(t)\lesssim\lambda^{-4\H}\int_0^{t/2}dr\int_{2r}^te^{2\lambda(2r-s)}ds\lesssim\frac{t}{\lambda^{4\H+1}}.
\end{equation}
We are left to treat $K_{1,1},\ K_{2,2},\ K_{3,1}$. $K_{1,1}$ splits again by Young's inequality as 
\begin{align*}
    K_{1,1}(t)&\lesssim \frac{1}{\lambda^2}\int_0^{t/2}\int_{r}^{2r}\left(\int_0^{s-r}\| z\|^{2\H-2}e^{-\lambda(s-r)}\left( e^{\lambda z}-e^{-\lambda z}\right)dz\right)^2 ds dr\\ &+ \frac{1}{\lambda^2}\int_0^{t/2}\int_{r}^{2r}\left(\int_{s-r}^{r}\| z\|^{2\H-2}e^{-\lambda z}\left( e^{\lambda(s-r)}-e^{-\lambda(s-r)}\right)dz\right)^2 ds dr\\
    &=K_{1,1,1}(t)+K_{1,1,2}(t).
\end{align*}
Let us start with 
\begin{align*}
    K_{1,1,1}(t)&=\frac{1}{\lambda^2}\int_0^{t/2}\int_{r}^{2r}\left(\int_0^{s-r}\| z\|^{2\H-2}e^{-\lambda(s-r)}\left( e^{\lambda z}-e^{-\lambda z}\right)dz\right)^2 \mathbf{1}_{s-r \le 1/\lambda}ds dr\\ &+\frac{1}{\lambda^2}\int_0^{t/2}\int_{r}^{2r}\left(\int_0^{s-r}\| z\|^{2\H-2}e^{-\lambda(s-r)}\left( e^{\lambda z}-e^{-\lambda z}\right)dz\right)^2 \mathbf{1}_{1/\lambda< s-r \le 1}ds dr\\ & +\frac{1}{\lambda^2}\int_0^{t/2}\int_{r}^{2r}\left(\int_0^{s-r}\| z\|^{2\H-2}e^{-\lambda(s-r)}\left( e^{\lambda z}-e^{-\lambda z}\right)dz\right)^2 \mathbf{1}_{ s-r > 1}ds dr\\
    &=K_{1,1,1,1}(t)+K_{1,1,1,2}(t)+K_{1,1,1,3}(t).
\end{align*}
For the first one we have
\begin{align}\label{K1111}
K_{1,1,1,1}(t)&\lesssim \int_{0}^{t/2}\int_r^{2r}e^{-2\lambda (s-r)}\left(\int_0^{s-r} \| z\|^{2\H-1} dz\right)^2 \one_{\{s-r\leq \frac{1}{\lambda}\}} ds dr\notag\\ & \lesssim \frac{1}{\lambda^{4\H}}\int_{0}^{t/2}\int_r^{2r}e^{-2\lambda (s-r)} \one_{\{s-r\leq \frac{1}{\lambda}\}} ds dr\lesssim \frac{t}{\lambda^{4\H+1}}.
\end{align}
For the second one we have
\begin{align}\label{K1112}
K_{1,1,1,2}(t)&\lesssim  \int_{0}^{t/2}\int_r^{2r}e^{-2\lambda (s-r)}\left(\int_0^{\frac{1}{\lambda}} \| z\|^{2\H-1} dz\right)^2 \one_{\{\frac{1}{\lambda}<s-r\leq 1\} } ds dr \notag\\
&+\frac{1}{\lambda^{4\H-2}}\int_{0}^{t/2}\int_r^{2r}e^{-2\lambda (s-r)}\left(\int_{\frac{1}{\lambda}}^{s-r} e^{\lambda z} dz\right)^2 \one_{\{\frac{1}{\lambda}<s-r\leq 1 \}} ds dr \notag\\ & \lesssim \frac{t}{\lambda^{4\H+1}}+\frac{t}{\lambda^{4\H}}.
\end{align}
For the last one, choosing $\gamma=\frac{1-\theta}{4(1-\H)}\in (0,1)$ with $\theta\in (0,1)$, we have
\begin{align}\label{K1113}
    K_{1,1,1,3}(t)& \lesssim\frac{1}{\lambda^2}\int_{0}^{t/2}\int_r^{2r}e^{-2\lambda (s-r)}\left(\int_0^{1/\lambda} \| z\|^{2\H-2}\left(e^{\lambda z}-e^{-\lambda z}\right) dz\right)^2 \one_{\{s-r> 1\} } ds dr \notag
    \\ & +\frac{1}{\lambda^2}\int_{0}^{t/2}\int_r^{2r}e^{-2\lambda (s-r)}\left(\int_{1/\lambda}^{(s-r)^\gamma} \| z\|^{2\H-2}\left(e^{\lambda z}-e^{-\lambda z}\right) dz\right)^2 \one_{\{s-r> 1\} } ds dr\notag\\ & + \frac{1}{\lambda^2}\int_{0}^{t/2}\int_r^{2r}e^{-2\lambda (s-r)}\left(\int_{(s-r)^{\gamma}}^{s-r} \| z\|^{2\H-2}\left(e^{\lambda z}-e^{-\lambda z}\right) dz\right)^2 \one_{\{s-r> 1\} } ds dr\notag\\ & \lesssim \int_{0}^{t/2}\int_r^{2r}e^{-2\lambda (s-r)}\left(\int_0^{1/\lambda} \| z\|^{2\H-1} dz\right)^2 \one_{\{s-r> 1\} } ds dr \notag\\ & +\frac{1}{\lambda^2}\int_{0}^{t/2}\int_r^{2r}e^{-2\lambda (s-r)}e^{2\lambda (s-r)^{\gamma}}\left(\int_{1/\lambda}^{(s-r)^\gamma} \| z\|^{2\H-2}\right)^2 \one_{\{s-r> 1\} } ds dr\notag\\ & +\frac{1}{\lambda^2} \int_{0}^{t/2} \int_r^{2r} e^{-2\lambda(s-r)}|s-r|^{4\gamma(\H-1)}\left(\int_{(s-r)^{\gamma}}^{s-r} e^{\lambda z} dz\right)^2\one_{\{s-r> 1\} } ds dr\notag \\ & \lesssim \frac{t}{\lambda^{4\H+1}}+\frac{1}{\lambda^{4\H}}\int_{0}^{t/2}\int_{(r+1)\wedge 2r}^{2r} e^{-2\lambda(1-\gamma)(s-r-1)} ds dr\notag \\ &+\frac{1}{\lambda^4} \int_0^{t/2} \int_r^{2r} |s-r|^{4\gamma(\H-1)}\one_{\{s-r> 1\} } ds dr \lesssim \frac{t}{\lambda^{4\H+1}}+\frac{t^{1+\theta}}{\lambda^4}.
\end{align}
Combining \eqref{K1111}, \eqref{K1112} and \eqref{K1113} we obtain
\begin{align}\label{final_K111}
    K_{1,1,1}(t)&\lesssim \frac{t}{\lambda^{4\H}}+\frac{t^{1+\theta}}{\lambda^4}.
\end{align}
Concerning $K_{1,1,2}(t)$, since $2\H-2<0$, we can argue in this way
\begin{align}\label{preliminaryK112}
    K_{1,1,2}(t)&\lesssim\frac{1}{\lambda^2}\int_0^{t/2}\int_{r}^{2r}\left(\int_{s-r}^{r}\| s-r\|^{2\H-2}e^{-\lambda z}\left( e^{\lambda(s-r)}-e^{-\lambda(s-r)}\right)dz\right)^2 ds dr\notag\\ & \lesssim 
    \frac{1}{\lambda^4}\int_0^{t/2}\int_{0}^{r}\| s\|^{4\H-4}\left( 1-e^{-2\lambda s}\right)^2 ds dr.
\end{align}
For $|s|\leq \frac{1}{2\lambda}$ we have
\begin{align*}
    \| s\|^{4\H-4}\left( 1-e^{-2\lambda s}\right)^2\lesssim \lambda^2 \| s\|^{4\H-2},
\end{align*}
while for $|s|> \frac{1}{2\lambda}$, since $4\H-4<-1$ we have for each $\theta\in (0, 1)$ the trivial estimate
\begin{align*}
  \| s\|^{4\H-4}\left( 1-e^{-2\lambda s}\right)^2\lesssim  \| s\|^{-1+\theta}\| \lambda\|^{3+\theta-4\H}.
\end{align*}
Therefore it holds
\begin{align}\label{finalK_112}
    K_{1,1,2}(t)& \lesssim \frac{1}{\lambda^2}\int_0^{1/2\lambda} \int_{0}^r |s|^{4\H-2} ds dr+\frac{1}{\lambda^2}\int_{1/2\lambda}^{t/2}\int_0^{1/2\lambda} |s|^{4\H-2}ds dr\notag\\ & +\frac{1}{\lambda^{1+4\H-\theta}}\int_{1/2\lambda}^{t/2}\int_{1/2\lambda}^r \| s\|^{-1+\theta} ds dr \notag\\ & \lesssim\frac{1}{\lambda^{4\H+2}}+\frac{t}{\lambda^{4\H+1}}+\frac{t^{1+\theta}}{\lambda^{1+4\H-\theta}}.
\end{align}
Secondly we treat $K_{2,2}$. By Young's inequality we split it as 
\begin{align*}
    K_{2,2}(t)&\lesssim \frac{1}{\lambda^2}\int_{0}^{t/2}\int_{2r}^t\left(\int_r^{s-r}e^{-\lambda(s-z)}|z|^{2\H-2}(e^{\lambda r}-e^{-\lambda r})dz\right)^2 ds dr\\ & +\frac{1}{\lambda^2}\int_{0}^{t/2}\int_{2r}^t\left(\int_{s-r}^{s}e^{-\lambda r}|z|^{2\H-2}(e^{\lambda (s-z)}-e^{-\lambda (s-z)})dz\right)^2 ds dr
    \\&=K_{2,2,1}(t)+K_{2,2,2}(t)
\end{align*}
and treat the two terms separately. Concerning $K_{2,2,1}$,  since $\H\in (1/4,1/2)$, it holds: 
\begin{align}
  K_{2,2,1}(t)& \lesssim \frac{1}{\lambda^2}\int_{0}^{t/2}\int_{2r}^t (e^{\lambda r}-e^{-\lambda r})^2e^{-2\lambda s}|r|^{4\H-4}\left(\int_r^{s-r}e^{\lambda z}dz\right)^2 ds dr\notag\\ & \lesssim \frac{1}{\lambda^4}\int_{0}^{t/2} (1-e^{-2\lambda r})^2(t-2r)|r|^{4\H-4} dr\notag\\ & \lesssim \frac{1}{\lambda^2}\int_{0}^{1/2\lambda} (t-2r)|r|^{4\H-2} dr+\frac{1}{\lambda^4}\int_{1/2\lambda}^{t/4} (t-2r)|r|^{4\H-4} dr\\
  &+\frac{1}{\lambda^4}\int_{t/4}^{t/2} (t-2r)|r|^{4\H-4} dr.\notag
\end{align}
Therefore
\begin{align}\label{finalK_221}
    K_{2,2,1}(t) \lesssim \frac{t}{\lambda^{4\H+1}}+\frac{t}{\lambda^{4\H+1}}+\frac{t^{4\H-2}}{\lambda^4}.
\end{align}
In order to treat $K_{2,2,2}(t)$ we preliminary observe the following. We trivially have
\begin{align*}
\left|\int_{s-r}^{s}|z|^{2\H-2}(e^{\lambda (s-z)}-e^{-\lambda (s-z)})dz\right| &\lesssim \frac{|s-r|^{2\H-2}  }{\lambda} e^{\lambda r}.  
\end{align*}
Also, integrating by parts we get
\begin{align*}
  \left| \int_{s-r}^{s}|z|^{2\H-2}(e^{\lambda (s-z)}-e^{-\lambda (s-z)})dz\right| &=\frac{|s-r|^{2\H-1}}{1-2\H}(e^{\lambda r}-e^{-\lambda r})\\ &+\frac{\lambda}{1-2\H}\int_{s-r}^r |z|^{2\H-1}(e^{\lambda(s-z)}+e^{\lambda(s-z)}) dz\\ & \lesssim |s-r|^{2\H-1}e^{\lambda r}.
\end{align*}
Since $4\H-4<-2<4\H-2<0$, interpolating implies that, for each $\theta\in (0,4\H)$, it holds
\begin{align*}
\left| \int_{s-r}^{s}|z|^{2\H-2}(e^{\lambda (s-z)}-e^{-\lambda (s-z)})dz\right|^2& \lesssim \lambda^{\theta-4\H}|s-r|^{-2+\theta}e^{2\lambda r}. 
\end{align*}
Plugging this relation in the definition of $K_{2,2,2}$ we obtain
\begin{align}\label{finalK_222}
K_{2,2,2}(t)&\lesssim\frac{1}{\lambda^{4\H-\theta}}\int_{0}^{t/2}\int_{r}^{t-r}|s|^{-2+\theta} ds dr \lesssim    \frac{t^{\theta}}{\lambda^{4\H-\theta}}. 
\end{align}
Lastly we treat $K_{3,1}.$ Once more, by Young's inequality one has
\begin{align*}
    K_{3,1}(t)&\lesssim\frac{1}{\lambda^2}\int_{t/2}^{t}\int_r^{t}\left(\int_0^{s-r} e^{-\lambda (s-r)}\| z\|^{2\H-2}\left(e^{\lambda z}-e^{-\lambda z}\right) dz\right)^2 ds dr\\ & +\frac{1}{\lambda^2}\int_{t/2}^{t}\int_r^{t}\left(\int_{s-r}^r e^{-\lambda z}\| z\|^{2\H-2}\left(e^{\lambda(s-r)}-e^{-\lambda(s-r)}\right) dz\right)^2 ds dr\\&=K_{3,1,1}(t)+K_{3,1,2}(t).
\end{align*}
The analysis of $K_{3,1,1}(t)$ is similar to $K_{1,1,1}(t)$. Indeed, it holds
\begin{align*}
 K_{3,1,1}(t)&\lesssim \frac{1}{\lambda^2}\int_{t/2}^{t}\int_r^{t}\left(\int_0^{s-r} e^{-\lambda (s-r)}\| z\|^{2\H-2}\left(e^{\lambda z}-e^{-\lambda z}\right) dz\right)^2 \one_{\{s-r\leq \frac{1}{\lambda}\}} ds dr\\ & +\frac{1}{\lambda^2}\int_{t/2}^{t}\int_r^{t}\left(\int_0^{s-r} e^{-\lambda (s-r)}\| z\|^{2\H-2}\left(e^{\lambda z}-e^{-\lambda z}\right) dz\right)^2 \one_{\{\frac{1}{\lambda}<s-r\leq 1\} } ds dr  \\ 
 &+\frac{1}{\lambda^2}\int_{t/2}^{t}\int_r^{t}\left(\int_0^{s-r} e^{-\lambda (s-r)}\| z\|^{2\H-2}\left(e^{\lambda z}-e^{-\lambda z}\right) dz\right)^2 \one_{\{s-r> 1\} } ds dr\\ &=K_{3,1,1,1}(t)+K_{3,1,1,2}(t)+K_{3,1,1,3}(t).
\end{align*}
For the first one we have
\begin{align}\label{K3111}
K_{3,1,1,1}(t)&\lesssim \int_{t/2}^{t}\int_r^{t}e^{-2\lambda (s-r)}\left(\int_0^{s-r} \| z\|^{2\H-1} dz\right)^2 \one_{\{s-r\leq \frac{1}{\lambda}\}} ds dr\notag\\ & \lesssim \frac{1}{\lambda^{4\H}}\int_{t/2}^{t}\int_r^{t}e^{-2\lambda (s-r)} \one_{\{s-r\leq \frac{1}{\lambda}\}} ds dr\lesssim \frac{t}{\lambda^{4\H+1}}.
\end{align}
For the second one we have
\begin{align}\label{K3112}
K_{3,1,1,2}(t)&\lesssim  \int_{t/2}^{t}\int_r^{t}e^{-2\lambda (s-r)}\left(\int_0^{\frac{1}{\lambda}} \| z\|^{2\H-1} dz\right)^2 \one_{\{\frac{1}{\lambda}<s-r\leq 1 \}} ds dr \notag\\
&+\frac{1}{\lambda^{4\H-2}}\int_{t/2}^{t}\int_r^{t}e^{-2\lambda (s-r)}\left(\int_{\frac{1}{\lambda}}^{s-r} e^{\lambda z} dz\right)^2 \one_{\{\frac{1}{\lambda}<s-r\leq 1\} } ds dr \notag\\ & \lesssim \frac{t}{\lambda^{4\H+1}}+\frac{t}{\lambda^{4\H}}.
\end{align}
For the last one, choosing $\gamma=\frac{1-\theta}{4(1-\H)}\in (0,1)$ with $\theta\in (0,1)$, we have
\begin{align}\label{K3113}
    K_{3,1,1,3}(t)& \lesssim\frac{1}{\lambda^2}\int_{t/2}^{t}\int_r^{t}e^{-2\lambda (s-r)}\left(\int_0^{1/\lambda} \| z\|^{2\H-2}\left(e^{\lambda z}-e^{-\lambda z}\right) dz\right)^2 \one_{\{s-r> 1\} } ds dr \notag
    \\ & +\frac{1}{\lambda^2}\int_{t/2}^{t}\int_r^{t}e^{-2\lambda (s-r)}\left(\int_{1/\lambda}^{(s-r)^\gamma} \| z\|^{2\H-2}\left(e^{\lambda z}-e^{-\lambda z}\right) dz\right)^2 \one_{\{s-r> 1 \}} ds dr\notag\\ & + \frac{1}{\lambda^2}\int_{t/2}^{t}\int_r^{t}e^{-2\lambda (s-r)}\left(\int_{(s-r)^{\gamma}}^{s-r} \| z\|^{2\H-2}\left(e^{\lambda z}-e^{-\lambda z}\right) dz\right)^2 \one_{\{s-r> 1\} } ds dr\notag\\ & \lesssim \int_{t/2}^{t}\int_r^{t}e^{-2\lambda (s-r)}\left(\int_0^{1/\lambda} \| z\|^{2\H-1} dz\right)^2 \one_{\{s-r> 1\} } ds dr \notag\\ & +\frac{1}{\lambda^2}\int_{t/2}^{t}\int_r^{t}e^{-2\lambda (s-r)}e^{2\lambda (s-r)^{\gamma}}\left(\int_{1/\lambda}^{(s-r)^\gamma} \| z\|^{2\H-2}\right)^2 \one_{\{s-r> 1\} } ds dr\notag\\ & +\frac{1}{\lambda^2} \int_{t/2}^t \int_r^t e^{-2\lambda(s-r)}|s-r|^{4\gamma(\H-1)}\left(\int_{(s-r)^{\gamma}}^{s-r} e^{\lambda z} dz\right)^2\one_{\{s-r> 1\} } ds dr\notag \\ & \lesssim \frac{t}{\lambda^{4\H+1}}+\frac{1}{\lambda^{4\H}}\int_{t/2}^t\int_{(r+1)\wedge t}^t e^{-2\lambda(1-\gamma)(s-r-1)} ds dr\notag \\ &+\frac{1}{\lambda^4} \int_{t/2}^t \int_r^t |s-r|^{4\gamma(\H-1)}\one_{\{s-r> 1\} } ds dr \lesssim \frac{t}{\lambda^{4\H+1}}+\frac{t^{1+\theta}}{\lambda^4}.
\end{align}
Combining \eqref{K3111}, \eqref{K3112} and \eqref{K3113} we obtain
\begin{align}\label{final_K311}
    K_{3,1,1}(t)&\lesssim \frac{t}{\lambda^{4\H}}+\frac{t^{1+\theta}}{\lambda^4}.
\end{align}
Not surprisingly, also the analysis of $K_{3,1,2}$ is similar to that of $K_{1,1,2}$. Indeed, by trivial estimates and change of variables we have
\begin{align*}
    K_{3,1,2}(t)&\lesssim\frac{1}{\lambda^4}\int_{t/2}^t dr\int_0^{t-r}ds(1-e^{-2\lambda s})^2|s|^{4\H-4}ds\\ &\lesssim\frac{1}{\lambda^4}\int_0^{t/2} dr\int_0^{r}ds(1-e^{-2\lambda s})^2|s|^{4\H-4}ds
\end{align*}
which corresponds exactly to \eqref{preliminaryK112}, therefore by the very same computations we have
\begin{align}\label{final_K312}
     K_{3,1,2}(t)\lesssim \frac{1}{\lambda^{4\H+2}}+\frac{t}{\lambda^{4\H+1}}+\frac{t^{1+\theta}}{\lambda^{1+4\H-\theta}}.
\end{align}
Combining \eqref{eq:intermediate_inequality_final} with  \eqref{final_K_12},\eqref{final_K_21},\eqref{final_K111},\eqref{finalK_112},\eqref{finalK_221},\eqref{finalK_222},\eqref{final_K311},\eqref{final_K312} completes the proof. 
\end{proof}
\subsection{Infinite-dimensional case}
In this section we treat the infinite-dimensional framework. Let $w^{\epsilon}$ the solution of 
\begin{align*}
    \begin{cases}
dw^{\epsilon}&=\epsilon^{-1}Cw^{\epsilon}dt+\epsilon^{-\H}dW^\H_t\\
        w^{\epsilon}(0)&=0.
    \end{cases}
\end{align*}
Under the conditions on $Q$ and $C$ described in \autoref{sec_notation}, $w^{\epsilon}$ is well-defined and moreover the following holds:
\begin{lemma}\label{hp_stoch_conv}
    Let $\H\in (\frac{1}{4},1)$,\ for each $q\geq 2$
\begin{align*}
    \sup_{\epsilon\in (0,1),\ t\in [0,T]}\mathbb{E}[\norm{w^{\epsilon}_t}_{H^{\xi}}^q]\leq C_{T,q}.
\end{align*}
\end{lemma}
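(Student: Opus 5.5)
We expand $w^{\epsilon}$ in the eigenbasis $\{e_k\}$, which diagonalizes both $Q$ and $C$. Writing $w^{\epsilon}_t=\sum_k w^{\epsilon,k}_t e_k$, each coordinate solves the scalar equation $dw^{\epsilon,k}=\epsilon^{-1}\lambda_k w^{\epsilon,k}dt+\epsilon^{-\H}\sigma_k dW^{\H,k}_t$ with $w^{\epsilon,k}_0=0$. This is exactly the one-dimensional fractional Ornstein--Uhlenbeck process of \autoref{rough_convergence_lemma}, with parameters $\lambda=\epsilon^{-1}|\lambda_k|$, $\sigma=\epsilon^{-\H}\sigma_k$ and $x_0=0$. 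The coordinates are independent centered Gaussians. The plan is to first control $\EE[\|w^{\epsilon}_t\|_{H^\xi}^2]$ uniformly, and then upgrade to all $q\ge2$ by Gaussianity.

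\textbf{Second moment.} By \eqref{eq:secondMomentOU} with $x_0=0$, $\EE[(w^{\epsilon,k}_t)^2]\le 2\sigma^2\H\int_0^\infty e^{-\lambda z}z^{2\H-1}dz\lesssim \sigma^2\lambda^{-2\H}$. Here we bound the second term by the first, using $e^{-\lambda t}e^{-\lambda(t-z)}\le e^{-\lambda z}$ for $z\le t$. Substituting the parameters gives
\[
\EE[(w^{\epsilon,k}_t)^2]\lesssim \epsilon^{-2\H}\sigma_k^2\,\epsilon^{2\H}|\lambda_k|^{-2\H}=\sigma_k^2|\lambda_k|^{-2\H}\le \sigma_k^2|\lambda_1|^{-2\H}.
\]
The last inequality uses \eqref{Ass1onC}, since $|\lambda_k|\ge|\lambda_1|>0$. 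The $\epsilon$-scalings cancel exactly, which is precisely the reason for the choice $\epsilon^{-\H}$ in front of the noise. The bound is also uniform in $t\in[0,T]$ and in fact in $t\ge0$. Summing over $k$ against $\|e_k\|_{H^\xi}^2$ and using the orthogonality of the $e_k$ in $H^\xi$, we get
\[
\EE\|w^{\epsilon}_t\|_{H^\xi}^2=\sum_k \EE[(w^{\epsilon,k}_t)^2]\|e_k\|_{H^\xi}^2\lesssim |\lambda_1|^{-2\H}\sum_k\sigma_k^2\|e_k\|_{H^\xi}^2<\infty
\]
by \eqref{HP_noise_coeff}. This convergence also justifies that the series defining $w^\epsilon_t$ converges in $L^2(\Omega;H^\xi)$, so $w^\epsilon$ is well defined.

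\textbf{Higher moments.} For fixed $\epsilon$ and $t$, $w^{\epsilon}_t$ is a centered Gaussian random variable in the Hilbert space $H^\xi$, being an $L^2$-limit of finite sums of independent Gaussians. By Fernique's theorem, or more elementarily by the equivalence of Gaussian moments (Kahane--Khintchine for Gaussian vectors), $\EE\|w^\epsilon_t\|^q_{H^\xi}\le c_q(\EE\|w^\epsilon_t\|^2_{H^\xi})^{q/2}$ with $c_q$ depending only on $q$. Alternatively, one can use the explicit bound $\EE\|X\|^{2m}\le (2m-1)!!\,(\EE\|X\|^2)^m$, which follows by expanding $\|X\|^{2m}=(\sum_k X_k^2)^m$ and applying Hölder together with $\EE X_k^{2m}=(2m-1)!!(\EE X_k^2)^m$. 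Combining this with the second-moment step gives the claim with a constant that is in fact independent of $T$.

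\textbf{Main obstacle.} The only delicate point is the uniformity in $\epsilon$, which rests on the exact cancellation $\sigma^2\lambda^{-2\H}=\sigma_k^2|\lambda_k|^{-2\H}$. One must use the explicit formula \eqref{eq:secondMomentOU} rather than \eqref{rough_exponential_convergence_mean}. The latter also has the right $\lambda^{-2\H}$ scaling, but it could be used as well; the explicit formula simply avoids carrying $\overline\sigma^2$. Everything else is summation and Gaussian moment equivalence.
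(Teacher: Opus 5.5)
Your proposal is correct and follows essentially the same route as the paper. You diagonalize in $\{e_k\}$ and apply the explicit variance formula \eqref{eq:secondMomentOU} to each scalar fractional Ornstein--Uhlenbeck coordinate, which gives the $\epsilon$-independent bound $\sigma_k^2|\lambda_k|^{-2\H}\le\sigma_k^2|\lambda_1|^{-2\H}$; you then sum against $\|e_k\|_{H^\xi}^2$ using \eqref{HP_noise_coeff} and pass to $q>2$ by Gaussian moment equivalence, where your explicit bound $e^{-\lambda(2t-z)}\le e^{-\lambda z}$ and your $(2m-1)!!$ moment estimate just spell out steps the paper leaves as ``by Gaussianity''.
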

\begin{proof}
    Due to the assumptions, $C$ generates a strongly continuous semigroup 
    \begin{align*}
        S(t)=\sum_{i}e^{\lambda_i t}e_i\otimes e_i
    \end{align*}
    on $L^2$ with infinitesimal generator $C$. Since the $W^{\H,i}$ are independent, it holds
    \begin{align*}
       \expt{\norm{ \sum_{i\geq 1} \int_0^t e^{\epsilon^{-1}\lambda_i(t-s)} \sigma_i e_i dW^{\H,i}_s}_{H^{\xi}}^2}=\sum_{i\geq 1}\sigma_i^2 \norm{e_i}_{H^{\xi}}^2 \mathbb{E}\left[\| \int_0^t e^{\epsilon^{-1}\lambda_i(t-s)} dW^{\H,i}_s\|^2\right]
    \end{align*}
    Based on \eqref{eq:secondMomentOU},
    it holds
    \begin{align*}
       \mathbb{E}\left[\| \int_0^t e^{\epsilon^{-1}\lambda_i(t-s)} dW^{\H,i}_s\|^2\right]&=\H \left(\int_0^t  e^{\epsilon^{-1}\lambda_i z}\| z\|^{2\H-1} dz + \int_0^t e^{\epsilon^{-1}\lambda_i (2t-z)} \| z\|^{2\H-1} dz\right)\\ & \leq e^{\epsilon^{-1}\lambda_i t} \frac{\left(\epsilon^{-1}|\lambda_i |t\right)^{2\H}}{2\left(\epsilon^{-1}|\lambda_i| \right)^{2\H}}+\frac{\int_0^{+\infty} e^{-z}\| z\|^{2\H-1}dz}{(\epsilon^{-1}|\lambda_i|)^{2\H}}\\ & \lesssim \frac{1}{(\epsilon^{-1}|\lambda_i|)^{2\H}}.
    \end{align*}
    Since we assumed $\sup \lambda_{i}<0$ we conclude
    \begin{align*}
     \mathbb{E}\left[\| \int_0^t e^{\epsilon^{-1}\lambda_i(t-s)} dW^{\H,i}_s\|^2\right]& \lesssim \epsilon^{2\H}   
    \end{align*}
    and therefore for each $t\in [0,T],\ \epsilon\in (0,1)$
    \begin{align*}
        \mathbb{E}\left[\norm{w^{\epsilon}_t}_{H^{\xi}}^2\right]& \lesssim  \sum_{i\geq 1}\sigma_i^2 \norm{e_i}_{H^{\xi}}^2<+\infty
    \end{align*}
    and the claim follows due to Gaussianity.
\end{proof}
\begin{remark}
    As the proof show it is enough to ask
    \begin{align*}
        \sum_{i\geq 1}\frac{\sigma_i^2}{\| \lambda_i\|^{2\H}} \norm{e_i}_{H^{\xi}}^2<+\infty,
    \end{align*}
    so if the operator $C$ gives some regularization we get less stringent restrictions. However since our main goal is to treat the case $C=-I$ we do not stress this and not use any regularization coming from the semigroup generated by $C$ that ultimately reduces only in less stringent assumption on the decay of the $\sigma_i$'s.
\end{remark}
Next we show, similarly to equation \eqref{rough_power_convergence}, the validity of the following
\begin{lemma}\label{lem_ito_stokes}
   Let $\H\in (\frac{1}{4},\frac{1}{2}]$ denoting by 
    \begin{align*}
        \overline{r}=\int_{L^2}(-C)^{-1}b(r,r)d\mu(r),\quad
        \begin{cases}
            dw&= Cwdt+dW_t\\
            w(0)&=0,\ 
        \end{cases}
    \end{align*}
    where $\mu$ is the centered Gaussian measure on $L^2$ with covariance operator 
    \begin{align*}
        \overline{Q}=\sum_{i\geq 1} \frac{\sigma_i^2 e_i\otimes e_i}{2|\lambda_i|^{2\H}}\int_0^{+\infty}e^{-s} s^{2\H} ds 
    \end{align*}
    then it holds
\begin{align}\label{convergence_ITO_STOKES_COND}
        \frac{1}{T}\int_0^T (-C)^{-1}b(w_s,w_s)ds\rightarrow \overline{r} \quad \text{in } L^2(\Omega;H).
    \end{align}
\end{lemma}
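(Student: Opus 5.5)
We reduce the statement to the one-dimensional estimate \eqref{rough_power_convergence} of \autoref{rough_convergence_lemma} by expanding $w$ along the eigenbasis $\{e_i\}$. Since $C$ and $Q$ commute and $w(0)=0$, we have $w_t=\sum_i \sigma_i e_i X^i_t$, where $X^i_t=\int_0^t e^{\lambda_i(t-s)}dW^{\H,i}_s$ are independent one-dimensional fractional Ornstein--Uhlenbeck processes with parameters $\lambda=|\lambda_i|$, $\sigma=1$, $x_0=0$. By bilinearity,
\begin{align*}
\frac1T\int_0^T(-C)^{-1}b(w_s,w_s)\,ds=\sum_{i,j}\sigma_i\sigma_j(-C)^{-1}b(e_i,e_j)\,\frac1T\int_0^T X^i_sX^j_s\,ds .
\end{align*}
On the other hand, since $\mu$ is centered Gaussian with covariance $\overline{Q}$, we get $\overline{r}=\sum_i \sigma_i^2\,\overline{\sigma}_i^2\,(-C)^{-1}b(e_i,e_i)$. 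Here $\overline{\sigma}_i^2=\frac{1}{2|\lambda_i|^{2\H}}\int_0^\infty e^{-s}s^{2\H}ds$ is exactly the quantity \eqref{limit_variance} with $\sigma=1$, $\lambda=|\lambda_i|$. The off-diagonal contributions to $\overline{r}$ vanish because $\EE[X^iX^j]=0$ for $i\neq j$. The series converge in $H$ because $\|(-C)^{-1}b(e_i,e_j)\|_{L^2}\lesssim \|e_i\|_{H^\xi}\|e_j\|_{H^\xi}$. This bound uses $0\in\rho(C)$ and the Sobolev embedding $H^{\xi}\subset W^{1,\infty}$, valid since $\xi>d/2+2$. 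Combined with \eqref{HP_noise_coeff}, it gives the summability.

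The difference splits into a diagonal part and an off-diagonal part:
\begin{align*}
D_T=\sum_i \sigma_i^2(-C)^{-1}b(e_i,e_i)\Big(\tfrac1T\int_0^T (X^i_s)^2ds-\overline{\sigma}_i^2\Big),\qquad
O_T=\sum_{i\ne j}\sigma_i\sigma_j(-C)^{-1}b(e_i,e_j)\,\tfrac1T\int_0^T X^i_sX^j_s\,ds .
\end{align*}

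\textbf{Diagonal part.} We apply Minkowski's inequality in $L^2(\Omega;H)$. This gives $\|D_T\|_{L^2(\Omega;H)}\le \sum_i\sigma_i^2\|e_i\|_{H^\xi}^2\,\big\|\tfrac1T\int_0^T(X^i)^2-\overline{\sigma}_i^2\big\|_{L^2(\Omega)}$. By \eqref{rough_power_convergence} with $x_0=0$, $\sigma=1$, each factor is bounded by a quantity $\epsilon_i(T)$ that tends to $0$ as $T\to\infty$. Moreover, $\epsilon_i(T)$ is bounded uniformly in $i$ for $T\ge1$: by \eqref{Ass1onC} we have $|\lambda_i|\ge|\lambda_1|>0$, so every negative power of $\lambda$ appearing in \eqref{rough_power_convergence} is bounded by the corresponding power of $|\lambda_1|^{-1}$. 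Dominated convergence over the summable weights $\sigma_i^2\|e_i\|_{H^\xi}^2$ then gives $D_T\to0$.

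\textbf{Off-diagonal part.} We expand $\EE\|O_T\|_H^2$ as a quadruple sum over $(i,j,k,l)$ with $i\neq j$, $k\neq l$. By Isserlis' theorem and independence, $\EE[X^i_sX^j_sX^k_rX^l_r]$ vanishes unless $\{i,j\}=\{k,l\}$, in which case it equals $\EE[X^i_sX^i_r]\,\EE[X^j_sX^j_r]$. Hence $\EE\|O_T\|^2$ is bounded by
\begin{align*}
\sum_{i\ne j}\sigma_i^2\sigma_j^2\|e_i\|^2_{H^\xi}\|e_j\|^2_{H^\xi}\cdot\frac{2}{T^2}\int_0^T\!\!\int_0^s \big|\EE[X^i_sX^i_r]\big|\,\big|\EE[X^j_sX^j_r]\big|\,dr\,ds .
\end{align*}
By Cauchy--Schwarz, the time integral is at most the geometric mean of $\frac1{T^2}\int\int J_2^{(i)}(s,r)^2$ and the same with $j$. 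Here $J_2=\EE[B^{\sigma,\lambda}_sB^{\sigma,\lambda}_r]$ is the covariance term from the proof of \autoref{rough_convergence_lemma}. That proof shows precisely that $\frac1{T^2}\int_0^T\int_0^s J_2^2\,dr\,ds\to0$, via the bounds on $A_1,A_2,A_3$ and the estimates of the $K_{i,j}$. These bounds are again uniform in $\lambda\ge|\lambda_1|$, so dominated convergence yields $O_T\to0$ in $L^2(\Omega;H)$.

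\textbf{Main obstacle.} The delicate step is the uniformity in $i$ (equivalently in $\lambda_i$) of the one-dimensional bounds. These bounds mix positive powers of $t$ with negative powers of $\lambda$, and we need each term to be $o(1)$ in $T$ with a constant that does not grow with $|\lambda_i|$. We would handle this by rechecking each term of $\mathcal{R}_\H$ and of \eqref{rough_power_convergence}. For $T\ge1$ and $\lambda\ge|\lambda_1|$, each term is at most a constant (depending on $|\lambda_1|$) times $T^{-\kappa}$ for some $\kappa>0$; for example, take $\theta<1$ and use $3-4\H>0$ and $4-4\H>0$. Once this is checked, both summations are controlled by \eqref{HP_noise_coeff}.
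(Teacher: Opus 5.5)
Your proposal is correct and uses essentially the paper's argument: decompose $w$ into independent one-dimensional fractional OU modes with rates $|\lambda_i|\ge|\lambda_1|$, apply Isserlis' theorem to the fourth moments, and use the decay of $\frac{1}{T^2}\int_0^T\int_0^s J_2^2\,dr\,ds$ (together with \eqref{rough_exponential_convergence_mean}) from the proof of \autoref{rough_convergence_lemma}, uniformly in $i$. The only difference is bookkeeping: the paper expands the full square along $e_k$ and applies Isserlis to all index quadruples at once (variance products give the $O(1/T)$ terms $\mathcal{R}_1,\mathcal{R}_3$, covariance products give $\mathcal{R}_2$), whereas you treat the diagonal separately via Minkowski and the packaged bound \eqref{rough_power_convergence}, which is slightly more modular and needs only the boundedness of $(-C)^{-1}$ rather than an expansion in $\{e_k\}$.
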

\begin{proof}
    In order to ease the notation, let us call $X^{i}_t=\langle w_t,e_i\rangle$, due to the independence of the $W^{\H,i}$'s each $X^i$ is Gaussian and they are independent if $i\neq j$. We can therefore write the left hand side of \eqref{convergence_ITO_STOKES_COND} as 
    \begin{align*}
       \sum_{k\geq 1}\frac{e_k}{|\lambda_k|}\left( \frac{1}{T}\sum_{i,j\geq 1}\int_0^T  \langle b(e_i,e_j),e_k\rangle X^{i}_s X^j_s ds\right),
    \end{align*}
    while, calling $\overline{\sigma}_i:=\frac{\sigma_i^2}{2|\lambda_i|^{2\H}}\int_0^{+\infty}e^{-s} s^{2\H} ds$, $\overline{r}$ satisfies
    \begin{align*}
        \overline{r}=\sum_{i,k\geq 1}\overline{\sigma}_i^2\langle b(e_i,e_i),e_k\rangle \frac{e_k}{|\lambda_k|}.
    \end{align*}
    Therefore the claim corresponds to show that
    \begin{align*}
        \sum_{k\geq 1}\frac{a^k_T}{\lambda_k^2}\rightarrow 0\quad \text{as }T\rightarrow +\infty, 
    \end{align*}
    where
    \begin{align*}
    a^K_T=\mathbb{E}\left[\left\| \frac{1}{T}\int_0^T \sum_{i,j\geq 1}X^i_sX^j_s \langle b(e_i,e_j),e_k\rangle ds-\sum_{i\geq 1}\overline{\sigma}_i^2\langle b(e_i,e_i),e_k\rangle\right\|^2\right].
    \end{align*}
    Obviously it holds
    \begin{align*}
        a_T^K&=I_T^1+I_T^2+I_T^3,\\
        I_T^1&=\frac{1}{T^2}\sum_{i,i',j,j'\geq 1}\int_0^T \int_0^T \langle b(e_i,e_j),e_k\rangle\langle b(e_{i'},e_{j'}),e_k\rangle \mathbb{E}\left[X^i_s X^j_s X^{i'}_r X^{j'}_r \right]dsdr,\\
        I_T^2&=\left(\sum_{i\geq 1}\overline{\sigma}_i^2 \langle b(e_i,e_i),e_k\rangle\right)^2,\\
        I_T^3&=-\frac{2}{T}\sum_{i,i'\geq 1}\overline{\sigma}_{i'}^2 \langle b(e_{i'},e_{i'}),e_k\rangle  \langle b(e_i,e_i),e_k\rangle \int_0^T \mathbb{E}\left[|X^i_s|^2 \right]ds.
    \end{align*}
    Therefore equation \eqref{rough_exponential_convergence_mean} implies 
    \begin{align*}
    \| 2I^2_T+I^3_T\| & \lesssim \frac{1}{T}\sum_{i,i'\geq 1} \frac{\sigma_i^2\overline{\sigma}_{i'}^2}{|\lambda_i|^{2 \H+1}}   \langle b(e_{i'},e_{i'}),e_k\rangle  \langle b(e_i,e_i),e_k\rangle \\ & \lesssim \frac{1}{T}\sum_{i,i'\geq 1} \sigma_i^2\overline{\sigma}_{i'}^2  \langle b(e_{i'},e_{i'}),e_k\rangle  \langle b(e_i,e_i),e_k\rangle=:\mathcal{R}_1(k,T).
    \end{align*}
    Hence
    \begin{align*}
        |a^K_T|& \leq |I^1_T-I^2_T|+\mathcal{R}_1(k,T).
    \end{align*}
    Exploiting Gaussianity and independence of the $X^i$ we furthermore have
    \begin{align*}
 &I^1_T=I^{1,1}_T+\mathcal{R}_2(k,T), \quad \text{where:}\\
 &I^{1,1}_T=\left\| \frac{1}{T}\sum_{i\geq 1}\int_0^T \langle b(e_i,e_i),e_k\rangle \mathbb{E}\left[|X_s^i|^2\right]ds\right\|^2,\\
 &\begin{aligned}\mathcal{R}_{2}(k,T)=\sum_{i,i'\geq 1}\frac{\langle b(e_i,e_{i'}),e_k \rangle\left(\langle b(e_i,e_{i'}),e_k \rangle+\langle b(e_{i'},e_{i}),e_k  \rangle\right)}{T^2}\\\
 \times\int_0^T \int_0^T  \mathbb{E}\left[X^i_s X^i_r\right] \mathbb{E}\left[X^{i'}_s X^{i'}_r\right] ds dr. 
 \end{aligned}
    \end{align*}
    Once more, by equation \eqref{rough_exponential_convergence_mean} and assumption \eqref{Ass1onC} we have
    \begin{align*}
        \| I^{1,1}_T-I^2_T\| & \lesssim \frac{1}{T}\left(\sum_{i\geq 1} \sigma_i^2|\langle b(e_i,e_i),e_k\rangle| \right)^2=: \mathcal{R}_3(k,T).
    \end{align*}
    So far we showed 
   \begin{align*}
       \sum_{k\geq 1}\frac{a^k_T}{\lambda_k^2}& \lesssim \sum_{k\geq 1 } \mathcal{R}_1(k,T)+\mathcal{R}_2(k,T)+\mathcal{R}_3(k,T).
   \end{align*}
   Moreover the convergence of $\sum_{k\geq 1 } \mathcal{R}_1(k,T)+\mathcal{R}_3(k,T)$ can be showed easily. Indeed since $e_k$ is a complete orthonormal system of $L^2$ and $\xi>3 $ it follows that 
   \begin{align*}
      \sum_{k\geq 1 } \mathcal{R}_1(k,T)+\mathcal{R}_3(k,T)& \lesssim \frac{1}{T} \norm{\sum_{i\geq 1}\sigma_i^2 b(e_i,e_i)}^2\\ & \lesssim \frac{1}{T} \left(\sum_{i\geq 1}\sigma_i^2 \norm{e_i}^2_{H^\xi}\right)^2 \lesssim \frac{1}{T}.
  \end{align*}
  The convergence of $\sum_{k\geq 1}\mathcal{R}_2(k,T)$ corresponds to the analysis of $\frac{1}{T^2}\int_0^T \int_0^s  J^2_2(s,r)drds$ in the proof of \autoref{rough_convergence_lemma}. Indeed, again since $e_k$ is an orthonormal system of $L^2$ we have  
  \begin{align*}
      \sum_{k\geq 1}\mathcal{R}_2(k,T)\lesssim \frac{1}{T^2}\sum_{i,i'\geq 1} &\left(\norm{b(e_i,e_i')}^2+\norm{b(e_i',e_i)}^2\right)\\
      &\cdot\left(\int_0^T\int_0^s\left\|\mathbb{E}[X^i_sX^i_r]\right\| \|\mathbb{E}[X^{i'}_sX^{i'}_r]\| drds\right).
  \end{align*}
  Now, as shown in the proof of \autoref{rough_convergence_lemma} we have
  \begin{align*}
      \int_0^T\int_0^s\left\|\mathbb{E}[X^i_sX^i_r]\right\| \|\mathbb{E}[X^{i'}_sX^{i'}_r]\| drds & \lesssim \sigma_i^2\sigma_{i'}^2\int_0^T\int_0^s\frac{\left\|\mathbb{E}[X^i_sX^i_r]\right\|^2}{\sigma_i^4}+ \frac{\left\|\mathbb{E}[X^{i'}_sX^{i'}_r]\right\|^2}{\sigma_{i'}^4} drds\\& \lesssim  \sigma_i^2\sigma_{i'}^2\mathcal{R}(T) T^2,
     \end{align*}
     where
     \begin{align*}
     \mathcal{R}(T) &=\begin{cases}
        \frac{1}{T^2}+\frac{ 1}{T^{\theta(1-\theta)}}\quad &\text{if } \H\in (\frac{1}{4},\frac{1}{2})\\
           \frac{1}{T}\quad &\text{if } \H=\frac{1}{2}
       \end{cases}.
   \end{align*}
   for each $0<\theta<2\H<1$. Let us stress that the hidden constant above depends on $\theta.$ However once fixed a choice of $\theta,$ then we have that $\mathcal{R}(T)\rightarrow 0$.
   Therefore, we have
   \begin{align*}
       \sum_{k\geq 1}\mathcal{R}_2(k,T)& \lesssim \mathcal{R}(T)\sum_{i,i'\geq 1}\sigma_i^2\sigma_{i'}^2\norm{e_i}_{H^{\xi}}^2\norm{e_{i}'}_{H^{\xi}}^2\rightarrow 0\quad \text{as }T\rightarrow+\infty.
   \end{align*}
 This completes the proof.
\end{proof}

\section{Unbounded rough drivers convergence}\label{sec_unbounder_rough}
Building on \autoref{sec_rough_formulation}, our goal is to analyze the convergence of the canonical rough path lift of 
\begin{equation}\label{def:y}
    y^{\epsilon}_t=\int_0^t\epsilon^{-\alpha+\H}w^{\epsilon}_r\,dr
\end{equation}
as $\epsilon$ tends to zero.
Since lower Hurst parameters entail a rougher fBm and hence a more intricate rough path structure, this analysis becomes more delicate when $\H<1/2$. We therefore begin with the simpler regime $\H>1/2$, adopting the strategy of \cite{debussche2023rough}. The more involved case will instead rely on specific properties of Gaussian rough paths (see \cite{friz2010multidimensional,friz2014course}).

\begin{proposition}\label{proproughHgreat12}
    Let $\H>1/2$, $\alpha=1$. Then for $\epsilon\to 0$, $(y_t)_t$ converges to $(-C)^{-1}W^\H_t$ in $L^q(\Omega,C^{p\var}(H^\xi))$ for any $q\in [2,\infty)$ and $p\geq\frac1\H$.

    Moreover, the following map 
    \begin{equation*}
        \epsilon\in(0,1) \mapsto y^{\epsilon}\in L^q(\Omega,C^{p\var}(H^\xi))
    \end{equation*}
    is Holder-continuous and it holds that
    \begin{equation}\label{est:UniformRegularityURDH>1/2}
        \mathbb{E}\left[\sup_{\epsilon\in (0,1)}\|y^\epsilon\|_{p\var,[0,T]}^q\right]\lesssim 1.
    \end{equation}
\end{proposition}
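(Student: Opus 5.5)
With $\alpha=1$ we have $y^\epsilon_t=\epsilon^{\H-1}\int_0^t w^\epsilon_r\,dr$. Integrating the equation for $w^\epsilon$ in time gives $w^\epsilon_t=\epsilon^{-1}C\int_0^t w^\epsilon_r\,dr+\epsilon^{-\H}W^\H_t$. Since $C$ is invertible and commutes with everything in sight, this yields the exact identity
\begin{equation*}
y^\epsilon_t=(-C)^{-1}W^\H_t-\epsilon^{\H}(-C)^{-1}w^\epsilon_t .
\end{equation*}
The whole proof reduces to controlling the error $e^\epsilon_t:=\epsilon^{\H}(-C)^{-1}w^\epsilon_t$ in $L^q(\Omega,C^{p\var}(H^\xi))$. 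Since $(-C)^{-1}$ is diagonal with eigenvalues bounded by $|\lambda_1|^{-1}$, it is bounded on $H^\xi$. It therefore suffices to estimate $\epsilon^{\H}w^\epsilon$ in $H^\xi$.

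\textbf{Step 1: increment bounds.} Work coordinatewise: $\langle w^\epsilon_t,e_i\rangle=\sigma_i\epsilon^{-\H}\int_0^t e^{\epsilon^{-1}\lambda_i(t-s)}dW^{\H,i}_s$. By self-similarity of fBm, this has the law of $\sigma_i X^{i}_{t/\epsilon}$, where $X^i$ is the fOU of \autoref{rough_convergence_lemma} with rate $|\lambda_i|$. I aim for a second-moment estimate of the form
\begin{equation*}
\EE\big[\|\epsilon^{\H}(w^\epsilon_t-w^\epsilon_s)\|_{H^\xi}^2\big]\lesssim \min\big(\epsilon^{2\H},|t-s|^{2\H}\big)\sum_i\sigma_i^2\|e_i\|_{H^\xi}^2 .
\end{equation*}
The $\epsilon^{2\H}$ bound follows from \autoref{hp_stoch_conv}. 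The $|t-s|^{2\H}$ bound follows from the identity above: $\epsilon^\H(-C)^{-1}w^\epsilon=(-C)^{-1}W^\H-y^\epsilon$, and $y^\epsilon$ has increments bounded by $\epsilon^{\H-1}|t-s|\sup\|w^\epsilon\|$. More simply, one can use the increment bound $\EE[(X_{s,t})^2]\lesssim\sigma^2(\lambda^{2-2\H}\vee1)|t-s|^{2\H}$ from the proof of \autoref{rough_convergence_lemma}, after rescaling.

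Interpolating the two bounds gives $\EE\|e^\epsilon_{s,t}\|^2\lesssim \epsilon^{2\H\delta}|t-s|^{2\H(1-\delta)}$ for small $\delta>0$. Gaussianity upgrades this to all moments $q$.

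\textbf{Step 2: variation norms.} Apply the Kolmogorov / Garsia--Rodemich--Rumsey criterion to get $\EE\|e^\epsilon\|^q_{\gamma\text{-Höl}}\lesssim\epsilon^{q\H\delta}$ for any $\gamma<\H(1-\delta)$. Hölder regularity with exponent $\gamma>1/p$ controls $p$-variation. For $p>1/\H$, choose $\delta$ small enough that $1/p<\H(1-\delta)$; this gives convergence in $L^q(\Omega,C^{p\var})$ at rate $\epsilon^{\H\delta}$. Membership in the closure of smooth paths holds because the paths are Hölder with an exponent strictly larger than $1/p$. The endpoint $p=1/\H$ needs care. Since $(-C)^{-1}W^\H$ has finite $1/\H$-variation only in a borderline sense, I would either read the statement as $p>1/\H$ or use the Gaussian $p$-variation results of Friz--Victoir with a covariance of finite $1$-variation in 2D sense. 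The latter I expect to apply, because for $\H>1/2$ the fOU covariance is uniformly of finite $(1/(2\H))$-variation.

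\textbf{Step 3: Hölder continuity in $\epsilon$ and the uniform supremum.} Here I expect the main obstacle. I would estimate $\EE\|\epsilon_1^\H w^{\epsilon_1}_t-\epsilon_2^\H w^{\epsilon_2}_t\|^2$ coordinatewise using the explicit kernels $\sigma_i\int_0^t\big(e^{\epsilon_1^{-1}\lambda_i(t-s)}-e^{\epsilon_2^{-1}\lambda_i(t-s)}\big)dW^{\H,i}_s$. Via the Wiener integral isometry for $\H>1/2$, or simply by interpolating with the triangle-inequality bound $\epsilon_1^\H+\epsilon_2^\H$, one can hope for a bound $\lesssim|\epsilon_1-\epsilon_2|^{\kappa}$ for some $\kappa>0$. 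Combined with Step 1 through a two-parameter Kolmogorov argument in $(\epsilon,t)$, this gives Hölder continuity of $\epsilon\mapsto y^\epsilon$ in $L^q(\Omega,C^{p\var})$ for large $q$. Applying Kolmogorov in the variable $\epsilon$ with values in the Banach space $C^{p\var}$ then yields \eqref{est:UniformRegularityURDH>1/2}. The obstacle is obtaining a genuine modulus in $\epsilon$ that remains uniform as $\epsilon\to0$. I would handle it by working with $\eta=\epsilon^{\H}$, or $\log\epsilon$, as the parameter, and by differentiating the kernel in $\epsilon$, which gives a factor $|\lambda_i|(t-s)\epsilon^{-2}e^{\lambda_i(t-s)/\epsilon}$ whose $L^2$-norm scales favourably.
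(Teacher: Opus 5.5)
Your convergence argument is the paper's. It uses the same exact identity $y^\epsilon_t=(-C)^{-1}W^\H_t-\epsilon^{\H}(-C)^{-1}w^\epsilon_t$, the two bounds $\epsilon^{2\H}$ and $|t-s|^{2\H}$, interpolation, Gaussian hypercontractivity and Kolmogorov. Only the $|t-s|^{2\H}$ bound is reached differently: the paper rescales time and studies the increment process $\overline{z}^s_r=z_{s+r}-z_s$, an Ornstein--Uhlenbeck equation with the extra forcing $Cz_s r$.

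The routes genuinely differ on regularity in $\epsilon$. The paper writes $y^\epsilon-y^\delta$ coordinatewise as the solution $v^{\epsilon,\delta,k}$ of a linear ODE forced by $z^{\epsilon,k}$. It uses the supremum bound \eqref{MomentOntheSup} to get the degenerate Lipschitz estimate \eqref{ineq2}. It then builds a modulus uniform down to $\epsilon=0$ through the three-case comparison with the rate \eqref{ineq1}.

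Your kernel-derivative idea does this in one stroke, once the computation is actually carried out. The derivative $\partial_\epsilon e^{\lambda_i u/\epsilon}=|\lambda_i|u\epsilon^{-2}e^{-|\lambda_i|u/\epsilon}$ has $L^{1/\H}(du)$-norm $\simeq\epsilon^{\H-1}|\lambda_i|^{-\H}$, which is integrable at $0$. Minkowski's inequality and the bound $\EE|\int f\,dB^\H|^2\lesssim\|f\|^2_{L^{1/\H}}$ (valid for $\H>1/2$) then give $\|y^{\epsilon_1}_t-y^{\epsilon_2}_t\|_{L^2(\Omega;H^\xi)}\lesssim\epsilon_2^\H-\epsilon_1^\H\le|\epsilon_2-\epsilon_1|^\H$ for $\epsilon_1<\epsilon_2$, uniformly in $t$. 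Interpolating with the uniform $|t-s|^{2\H}$ increment bound and applying Kolmogorov first in $t$, then in $\epsilon$, yields H\"older continuity and \eqref{est:UniformRegularityURDH>1/2}, with no case analysis and no supremum estimate. What the paper's pathwise route buys is portability: it is reused verbatim for $\H<1/2$ in \eqref{i:HolderSmall1st}, where no such $L^p$ bound on Wiener integrals is available.

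A few details need fixing.

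In the identity route for the $|t-s|^{2\H}$ bound, do not bound increments of $y^\epsilon$ by $\sup_{[0,T]}\|w^\epsilon\|$. By \eqref{MomentOntheSup} its second moment grows like $\log(1/\epsilon)$. Instead, use Cauchy--Schwarz in time and \autoref{hp_stoch_conv} to get $\EE\|y^\epsilon_{s,t}\|^2_{H^\xi}\lesssim\epsilon^{2\H-2}|t-s|^2\le|t-s|^{2\H}$ for $|t-s|\le\epsilon$. For $|t-s|>\epsilon$, use the $\epsilon^{2\H}$ bound.

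Your alternative via the increment estimate from the proof of \autoref{rough_convergence_lemma} carries a factor $|\lambda_i|^{2-2\H}$. That factor is summable only if you keep the $(-C)^{-1}$ you proposed to discard, since \eqref{Ass2onC} allows $|\lambda_i|\to\infty$.

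Finally, the endpoint $p=1/\H$ is not just delicate but false. Fractional Brownian motion has a.s. infinite $1/\H$-variation, exactly as Brownian motion has infinite $2$-variation. Nor does the Friz--Victoir option rescue it: for $\H>1/2$ the covariance of fBm has finite two-dimensional $\rho$-variation only for $\rho\ge1$, not $\rho=1/(2\H)$, so that criterion gives merely $p>2$. Read the statement with $p>1/\H$; that is all the paper's Kolmogorov argument proves and all it uses later.
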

\begin{proof}
\textit{First step: Convergence.}
    Let us highlight the following two properties:
    \begin{enumerate}
        \item By the self-similarity of fBm, the process $z_t = w^{\epsilon}_{\epsilon t}$ satisfies the equation $dz_t = C z_t\,dt + \overline{W}^\H_t$, where $\overline{W}^\H_t$ is distributed as $W^\H$. 
        \item Since $\alpha=1$, then \begin{equation}
            y^{\epsilon}_t=(-C)^{-1}[W_t^\H-\epsilon^{\H}w^{\epsilon}_t].
        \end{equation}
    \end{enumerate}
    Therefore, by applying \autoref{hp_stoch_conv}, we obtain that
    \begin{equation}\label{est:Interp1H>1/2}
        \mathbb{E}\sx[\|y^{\epsilon}\st-(-C)^{-1}W^\H\st\|^2_{H^\xi}\dx]=\mathbb{E}\sx[\|w^{\epsilon}\st\|_{H^{\xi}}^2 \dx]\epsilon^{2\H}\lesssim\epsilon^{2\H}.
    \end{equation}
    Now, we aim at showing 
    \begin{equation}\label{est:Interp2H>1/2}
        \mathbb{E}\sx[\|y^{\epsilon}\st-(-C)^{-1}W^\H\st\|^2_{H^\xi}\dx]\lesssim |t-s|^{2\H}.
    \end{equation}
    For $r\geq 0$ denote $\overline{z}^s_{r}:=z_{r+s}-z_s$. Then, $\overline{z}^s_{r}$ solves an Ornstein-Uhlenbeck SPDE with additional time-independent random forcing:
    \begin{equation*}
\overline{z}^s_r=\int_0^rC\,\overline{z}^s_u\,du+Cz_sr+W^\H_{s+r}-W^\H_s=\int_0^rC\,\overline{z}^s_u\,du+Cz_sr+\overline{W}^{\H,s}_r,
    \end{equation*}
    where $\overline{W}^{\H,s}_r:=W^\H_{s+r}-W^\H_s$ has the same law as $W^\H_r$. Therefore by exploiting the assumptions on $C$ and equation \eqref{eq:secondMomentOU}, we have that 
    \begin{equation}
        \begin{aligned}
            \EE\left[\|(-C)^{-1}\overline{z}^s_r\|_{H^{\xi}}^2\right]&\lesssim\sum_{k}\left|\int_0^re^{\lambda_k(r-u)}du\right|^2\lambda_k^{-2}\EE\left[\langle e_k,Cz_s\rangle^2\right]\|e_k\|_{\H^{\xi}}^2\\
            &+\sum_k\EE\left[\left\|\int_0^re^{\lambda_k(r-v)}\sigma_ke_k\,d\overline{W}^{\H,s,k}_v\right\|_{H^{\xi}}^2\right]\\
            &\lesssim \sum_k\lambda_k^{-2}|1-e^{\lambda_kr}|^2\EE\left[\langle e_k,z_s\rangle^2\right]\|e_k\|_{\H^{\xi}}^2+r^{2\H}\sum_k\sigma_k^2\|e_k\|_{H^\xi}^2\\
            &\lesssim r^{2\H}\left(\EE\left[\|z_s\|_{H^{\xi}}^2+\sum_k\sigma_k^2\|e_k\|_{H^\xi}^2\right]\right).
        \end{aligned}
    \end{equation}
    Then, equation \eqref{est:Interp2H>1/2} follows from
    \begin{equation}
        \begin{aligned}
            \EE\left[\|(-C)^{-1}\epsilon^{\H}w^{\epsilon}\st\|_{H^{\xi}}^2\right]\lesssim\epsilon^{2\H}\EE\left[\|(-C)^{-1}\overline{z}_{\epsilon^{-1}(t-s)}^{\epsilon^{-1}s}\|_{H^{\xi}}^2\right]\lesssim \left(t-s\right)^{2\H}.
        \end{aligned}
    \end{equation}
By interpolating equation \eqref{est:Interp1H>1/2} with equation \eqref{est:Interp2H>1/2}, and using the Gaussian property, we obtain for any $\chi \in (0,1)$
\begin{equation}\label{ineq1}\mathbb{E}\sx[\|y^{\epsilon}\st-(-C)^{-1}W^\H\st\|^q_{H^\xi}\dx]^{1/q}\lesssim \epsilon^{\H\chi}(t-s)^{\H(1-\chi)},\end{equation}
that implies convergence by means of Kolmogorov's theorem \cite[Theorem 3.3]{friz2010multidimensional}.

\textit{Second step: Proof of equation \eqref{est:UniformRegularityURDH>1/2}.}\\
 Notice that by integration by parts and by imposing $W^{k,\H}\equiv 0$ on $(-\infty,0]$, we have that
    \begin{align}\label{e:mollifiedForm}
     y^\epsilon_t&=\sum_k\frac{\sigma_ke_k}{|\lambda_k|}(\epsilon^{-1}|\lambda_k|)\int_0^tW^{k,\H}_re^{\lambda_k\epsilon^{-1}(t-r)}dr \notag\\
    &=\sum_k \frac{\sigma_ke_k}{|\lambda_k|}(\phi^{\epsilon}_k\ast W^{k,\H})(t)=:\sum_k \frac{\sigma_ke_k}{|\lambda_k|}W^{\epsilon,k,\H}_t,
    \end{align}
    where $\phi^{\epsilon}_k(r)=\mathbf{1}_{[0,T]}(r)e^{r\lambda_k\epsilon^{-1}}|\lambda_k|\epsilon^{-1}.$\\
For $\epsilon<\delta$, it holds
\begin{equation*}
    W^{\epsilon,k,\H}_t-W^{\delta,k,\H}_t=z^{\delta,k}_t-z^{\epsilon,k}_t=:v^{\epsilon,\delta,k}_t,
\end{equation*}
where $z^{\epsilon,k}_t$ solves \begin{equation*}
dz^{\epsilon,k}_t=\lambda_k\epsilon^{-1}z_t^{\epsilon,k}dt+dW^{k,\H}_t,
\end{equation*} with $0$ as starting condition. While, $v^{\epsilon,\delta}_t$ solves \begin{equation*}
dv^{\epsilon,\delta,k}_t=\lambda_k\delta^{-1}v^{\epsilon,\delta,k}_tdt+\lambda_kz^{\epsilon,k}_t(\delta^{-1}-\epsilon^{-1})dt.
\end{equation*}
Therefore,
\begin{equation}\label{stima0}
\EE\sx[\|y^{\epsilon}\st-y^{\delta}\st\|_{H^{\xi}}^2\dx]\le \sum_k\frac{\sigma_k^2\|e_k\|_{H^{\xi}}^2}{\lambda_k^2}\EE\sx[(v^{\epsilon,\delta,k}\st)^2\dx]
\end{equation}
Now,
\begin{align}\label{smallnessEpsDelta}
      \EE\sx[(v^{\epsilon,\delta,k}\st)^2\dx]&\lesssim (\delta-\epsilon)^2\epsilon^{-4}\EE\sx[\sx(\int_s^te^{\lambda_k\delta^{-1}(t-r)}\lambda_kz_r^{\epsilon,k}dr\dx)^2\dx]\notag\\
      &+(\delta-\epsilon)^2\epsilon^{-4}\EE\sx[\sx(\int_0^s e^{\lambda_k\delta^{-1}(t-r)}-e^{\lambda_k\delta^{-1}(s-r)}\lambda_kz_r^{\epsilon,k}dr\dx)^2\dx]\notag\\
      &\lesssim (\delta-\epsilon)^2\epsilon^{-4}\EE\sx[\sup_{t\in [0,T]}|z_t^{\epsilon,k}|^2\dx]\delta^2\sx(1-e^{\lambda_k\delta^{-1}(t-s)}\dx)^2\notag\\
      &\lesssim (\delta-\epsilon)^2\epsilon^{-4}\lambda_k^2(t-s)^2,
    \end{align}
where, thanks to equation \eqref{MomentOntheSup},
\begin{equation*}
    \EE\sx[\sup_{t\in [0,T]}|z_t^{\epsilon,k}|^2\dx]\lesssim \frac{\epsilon^{2\H}}{|\lambda_k|^{2\H}}\log\sx(1+T(\epsilon^{-1}|\lambda_k|)^{\frac{1-\H}{\H}}\dx)\lesssim1.
\end{equation*}

By equation \eqref{stima0}, equation \eqref{smallnessEpsDelta} and by applying Kolmogorov continuity theorem with respect to the temporal variable, we end up with
\begin{equation}\label{ineq2}\EE\left[\|y^\epsilon-y^\delta\|_{p\var,[0,T]}^q\right]^{1/q}\lesssim(\delta-\epsilon) \epsilon^{-2}.\end{equation}
At last, let $y^0=(-C)^{-1}W^\H$. Then for $\epsilon,\delta \in [0,1]$, we have that 
\begin{enumerate}
    \item if $2\epsilon<\delta$, equation \eqref{ineq1} implies 
    \begin{equation*}
        \EE\left[\|y^\epsilon-y^\delta\|_{p\var,[0,T]}^q\right]^{1/q}\lesssim\epsilon^{\chi\H}+\delta^{\chi\H}\lesssim (\delta-\epsilon)^{\chi \H};
    \end{equation*}
    \item if $\delta \in (\epsilon,2\epsilon)$ and $\epsilon\le (\delta-\epsilon)^{\nu}$, equation \eqref{ineq1} implies 
    \begin{equation*}
        \EE\left[\|y^\epsilon-y^\delta\|_{p\var,[0,T]}^q\right]^{1/q}\lesssim\epsilon^{\chi\H}\lesssim (\delta-\epsilon)^{\xi \H\nu};
    \end{equation*}
    \item if $\delta \in (\epsilon,2\epsilon)$ and $\epsilon> (\delta-\epsilon)^{\nu}$, thanks to equation \eqref{ineq2} we have
    \begin{equation*}
        \EE\left[\|y^\epsilon-y^\delta\|_{p\var,[0,T]}^q\right]^{1/q}\lesssim\epsilon^{-2}(\delta-\epsilon)\lesssim (\delta-\epsilon)^{1-2\nu}.
    \end{equation*}
\end{enumerate}
Therefore, the claim follows from properly choosing $\nu$ and applying the Kolmogorov theorem w.r.t. the $\epsilon$ parameter.
\end{proof}

\begin{proposition}\label{prop_rough_pathless12}
    Let $1/3<\H<1/2$, $\alpha = 1$, $q\geq 4$, $\gamma>1/2\H$ s.t. $2\H + 1/\gamma > 1$, and $3>p>2\gamma$. Then, as $\epsilon \to 0$, the canonical rough path lift of $(y_t^\epsilon)_t$, denoted by $\mathbf{Y}^\epsilon$, converges in $L^q(\Omega)$ to a $p$-rough path on $H^\xi$, denoted by $\mathbf{W}^\H=(\mathbb{W}^{\H,1},\mathbb{W}^{\H,2})$.
    In particular
    \begin{align*}
    \langle\mathbb{W}^{\H,1}\st,e_i\rangle_{H^\xi}&=\frac{\sigma_i}{|\lambda_i|}W^{\H,k}\st,\\
    \langle\pi_2(\mathbf{W}^\H)\st,e_{i_1}\otimes e_{i_2}\rangle_{(H^\xi)^{\otimes 2}}&=\frac{\prod_{j=1}^2\sigma_{i_j}}{|\prod_{j=1}^2\lambda_{i_j}|}\int_s^t\int_s^{r_{1}}dW^{\H,i_n}_{r_2} dW^{\H, i_1}_{r_1},
    \end{align*}
   where 
$
\int_s^t\int_s^{r_{1}} dW^{\H,i_2}_{r_n} dW^{\H,i_1}_{r_1}
$
agrees with the corresponding component of the canonical geometric lift of \((W^{\H,i_1}, W^{\H,i_2})\); see \cite{friz2010multidimensional}.
    Moreover, the following map 
    \begin{equation*}
        \epsilon\in(0,1) \mapsto \mathbf{Y}^{\epsilon}\in L^q\left(\Omega,C^{p\var}(H^\xi)\times C^{p/2\var}_2((H^\xi)^{\otimes 2})\right)
    \end{equation*}
    is Holder-continuous and it holds that
    \begin{equation}\label{est:UniformRegularityURDH<1/2}
        \mathbb{E}\left[\sup_{\epsilon\in (0,1)}\sum_{n=1}^2\|\mathbb{Y}^{\epsilon,n}\|_{p/n\var,[0,T]}^q\right]\lesssim 1.
    \end{equation}
\end{proposition}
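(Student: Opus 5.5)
We treat $\mathbf{Y}^\epsilon$ as the canonical lift of a Gaussian process whose coordinates are independent, and use the theory of Gaussian rough paths from \cite{friz2010multidimensional}. As in \eqref{e:mollifiedForm} (with $\alpha=1$), $y^\epsilon=\sum_k \frac{\sigma_k e_k}{|\lambda_k|}W^{\epsilon,k,\H}$, where $W^{\epsilon,k,\H}=\phi^\epsilon_k\ast W^{k,\H}$ is a one-sided exponential mollification of the $k$-th fractional Brownian motion. The coordinates are independent centered Gaussian processes. The target $\mathbf{W}^\H$ is the canonical geometric lift of $\sum_k \frac{\sigma_k e_k}{|\lambda_k|}W^{k,\H}$. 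Its components are obtained by summing the two-dimensional lifts of $(W^{\H,i_1},W^{\H,i_2})$ against the weights $\frac{\sigma_{i_1}\sigma_{i_2}}{|\lambda_{i_1}\lambda_{i_2}|}e_{i_1}\otimes e_{i_2}$. The weighted summability $\sum_k|\sigma_k\|e_k\|_{H^\xi}|^{p/2}<\infty$ from \autoref{sec_hp_noise} is exactly what makes these sums converge in $(H^\xi)^{\otimes 2}$ with $p/2$-variation control.

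\textbf{Step 1: uniform covariance bound.} We show that the covariance of each $W^{\epsilon,k,\H}$ has finite two-dimensional $\rho$-variation with $\rho=\gamma<\frac{1}{2\H}\cdot$ (up to the slack allowed by $\gamma>1/(2\H)$). The bound must be uniform in $\epsilon$ and $k$ and of the form $\|R_{W^{\epsilon,k}}\|_{\rho\var,[s,t]^2}\lesssim |t-s|^{1/\rho}$. Since $W^{\epsilon,k,\H}$ is a convolution of $W^{k,\H}$ with a probability density supported on $[0,\infty)$, its increments are averages of translated increments of fBm. Thus $R_{W^{\epsilon,k}}=\int\int R_{W}(\cdot-a,\cdot-b)\,\phi^\epsilon_k(a)\phi^\epsilon_k(b)\,da\,db$. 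By convexity of the $\rho$-variation norm and the stationarity of fBm increments, the known bound for fBm transfers directly. Here we use that fBm with $\H<1/2$ has covariance of finite $\frac{1}{2\H}$-variation, controlled by $|t-s|^{2\H}$. The edge effect at $0$ from setting $W\equiv0$ on $(-\infty,0]$ needs a short separate check.

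\textbf{Step 2: existence and uniform moments.} With Step 1 in hand, the general Gaussian rough path theorem in \cite{friz2010multidimensional} applies, since $\rho<3/2$ by $p>2\gamma$. It gives existence of the lift, the identification of the limit, and moment bounds for $\|\mathbb{Y}^{\epsilon,n}\|_{p/n\var}$ coordinatewise, including the cross iterated integrals of independent pairs. Summing with the weights and the $p/2$-summability gives the bound on each fixed $\epsilon$. The condition $2\H+1/\gamma>1$ ensures that the cross integrals $\int W^{\epsilon,i}\,dW^{\epsilon,j}$ are well defined as Young-type two-dimensional integrals of the covariances, in the sense of the Towghi criterion.

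\textbf{Step 3: interpolation and Kolmogorov in $\epsilon$.} Following the proof of \autoref{proproughHgreat12}, we establish two pointwise estimates for the difference of second levels, $\mathbb{E}|\mathbb{Y}^{\epsilon,2}\st-\mathbb{Y}^{\delta,2}\st|^2$. The first is a smallness bound in $(\delta-\epsilon)^{\kappa}\epsilon^{-c}$, together with $\epsilon^{\kappa}$ against the limit, since $\sup_{s,t}|R_{W^{\epsilon}}-R_{W}|\lesssim\epsilon^{2\H}$ and likewise for the mixed covariance. The second is the regularity bound $|t-s|^{2/\rho}$. Interpolating these via the $\rho'$-variation interpolation for covariances ($\rho'>\rho$), and then using Gaussian hypercontractivity (second chaos) to reach $L^q$, gives $\mathbb{E}[\rho_{p\var}(\mathbf{Y}^\epsilon,\mathbf{Y}^\delta)^q]^{1/q}\lesssim|\epsilon-\delta|^{\kappa'}$. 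This uses the local Lipschitz estimate for Gaussian lifts in terms of $\|R_{X^\epsilon-X^\delta}\|_{\rho'\var}$. The three-case argument in $(\epsilon,\delta)$ from \autoref{proproughHgreat12} then yields Hölder continuity, and Kolmogorov in the parameter $\epsilon$ gives \eqref{est:UniformRegularityURDH<1/2} and convergence.

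\textbf{Main obstacle.} The hardest step is controlling the $\rho'$-variation of the difference covariance $R_{W^{\epsilon,k}-W^{\delta,k}}$ with an explicit power of $|\epsilon-\delta|$, uniformly in $k$. The sup-norm bound is easy, but variation norms do not interpolate for free. My first attempt is the generic interpolation $\|R\|_{\rho'\var}\le\|R\|_{\infty}^{1-\rho/\rho'}\|R\|_{\rho\var}^{\rho/\rho'}$. The sup-norm factor can be estimated through the coupling $v^{\epsilon,\delta,k}$ from \eqref{smallnessEpsDelta} together with \eqref{MomentOntheSup}.
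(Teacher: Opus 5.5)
Your proposal is correct and follows essentially the same route as the paper. The ingredients match one for one:
\begin{itemize}
\item the coordinatewise mollified representation \eqref{e:mollifiedForm};
\item a uniform $\frac{1}{2\H}$-variation bound on the covariances of $W^{\epsilon,k,\H}$;
\item the Friz--Victoir interpolation from $\rho=\frac{1}{2\H}$ to $\rho'=\gamma$, for which the paper simply invokes \cite[Theorem 5]{friz2014convergence} and \cite[Corollary 15.32]{friz2010multidimensional}, with the sup-norm factor controlled through $v^{\epsilon,\delta,k}$ and \eqref{MomentOntheSup};
\item weighted summation using the $p/2$-summability;
\item the three-case Kolmogorov argument in $\epsilon$.
\end{itemize}

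There are two small corrections. The first is notational: in Step 1 the covariance exponent is $\rho=\frac{1}{2\H}$, and $\gamma>\frac{1}{2\H}$ plays the role of $\rho'$; writing $\rho=\gamma<\frac{1}{2\H}$ contradicts the hypothesis. The second is that your convexity bound should be stated for the joint covariance of $(W^{\epsilon,k,\H},W^{\delta,k,\H})$, as the paper does. Your convexity argument covers this case in the same way.
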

\begin{proof}
    \textit{First step: Convergence.} Recall equation \eqref{e:mollifiedForm}, namely that
    \begin{equation}
    \begin{aligned} y^\epsilon_t
    &=\sum_k \frac{\sigma_ke_k}{|\lambda_k|}(\phi^{\epsilon}_k\ast W^{k,\H})(t)=:\sum_k \frac{\sigma_ke_k}{|\lambda_k|}W^{\epsilon,k,\H}_t,
    \end{aligned}
    \end{equation}
    where $\phi^{\epsilon}_k(r)=\mathbf{1}_{[0,T]}(r)e^{r\lambda_k\epsilon^{-1}}|\lambda_k|\epsilon^{-1}.$\\
Let us estimate separately the different levels of the rough paths. Let us define 
\begin{equation*}
\overline{\sigma}=\sum_k\frac{\sigma_k^2\|e_k\|_{H^{\xi}}^2}{\lambda_k^2},\quad \hat{\sigma}=\left(\sum_{k}\frac{|\sigma_k\|e_k\|_{H^{\xi}}|^{p/2}}{|\lambda_k|^{p/2}}\right)^2.
\end{equation*}
For the first level,
\begin{align}\label{i:compWise1st}
    \EE&\sx[\sx(\sup_{P\in\Pi}\sum_{s,t\in P}\|\mathbb{Y}^{\epsilon,1}\st-\mathbb{W}^{\H,1}\st)\|_{H^\xi}^p\dx)^{q/p}\dx]\notag\\&=\EE\sx[\sx(\sup_{P\in\Pi}\sum_{s,t\in P}\sx(\sum_k \frac{\sigma_k^2\|e_k\|_{H^{\xi}}^2}{\lambda_k^2}|W^{\epsilon,k,\H}\st-W^{k,\H}\st|^2\dx)^{p/2}\dx)^{q/p}\dx]\notag\\
        &\le \overline{\sigma}^{\frac{q}{p}\frac{p-2}{2}}\EE\sx[\sx(\sup_{P\in\Pi}\sum_{s,t\in P}\sum_k \frac{\sigma_k^2\|e_k\|_{H^{\xi}}^2}{\lambda_k^2}|W^{\epsilon,k,\H}\st-W^{k,\H}\st|^p\dx)^{q/p}\dx]\notag\\
        &\le \overline{\sigma}^{\frac{q}{p}\frac{p-2}{2}}\EE\sx[\sx(\sum_k \frac{\sigma_k^2\|e_k\|_{H^{\xi}}^2}{\lambda_k^2}\sx(\sup_{P\in\Pi}\sum_{s,t\in P}|W^{\epsilon,k,\H}\st-W^{k,\H}\st|^p\dx)\dx)^{q/p}\dx]\notag\\
        &\le \overline{\sigma}^{\frac{q}{2}-1}\sum_k \frac{\sigma_k^2\|e_k\|_{H^{\xi}}^2}{\lambda_k^2}\EE\sx[\sx(\sup_{P\in\Pi}\sum_{s,t\in P}|W^{\epsilon,k,\H}\st-W^{k,\H}\st|^p\dx)^{q/p}\dx],
\end{align}
where $\Pi$ denotes the set of partitions of $[0,T]$, while the first and last inequalities are due to Jensen's inequality.\\
Now, we estimate $\EE\sx[\|W^{\epsilon,k,\H}-W^{k,\H}\|_{p\var,[0,T]}^q\dx]$ by means of \cite[Theorem 5]{friz2014convergence}. 
Following the notation in \cite{friz2014convergence}, let $C_0^{p\var}$ be the set of functions with finite $p$-varition starting from 0 and consider $\Lambda_\epsilon^k:C_0^{p\var}([0,T],\R)\to C_0^{1\var}([0,T],\R) $ defined by
\begin{equation*}
    \Lambda_{\epsilon}^k(x)(t)=(\phi^\epsilon_k \ast x)(t).
\end{equation*}
Clearly, $\Lambda_\epsilon$ is continuous. Indeed,
\begin{equation}\label{LambdaCont}
    |\partial_t\Lambda_{\epsilon}^k(x)(t)|\le |\lambda_k|\epsilon^{-1}\|x\|_0\le |\lambda_k|\epsilon^{-1}\|x\|_{p\var},
\end{equation}
where $\|x\|_0:=\sup_{t\in [0,T]}|x_t|.$
Moreover, we have uniform convergence:
\begin{align}\label{LambdaUnifConv}
|\Lambda_{\epsilon}^k(x)(t)-x(t)|&\le |x_{0,t}|e^{\lambda_k\epsilon^{-1}t}+\left|\int_0^tx_{r,t} \frac{e^{\lambda_k\epsilon^{-1}(t-r)}}{|\lambda_k|^{-1}\epsilon}dr\right|\notag\\
&\le \omega_x(0,t)^{1/p}e^{\lambda_1\epsilon^{-1}t}+\int_0^{\frac{\epsilon^{1/2}}{|\lambda_k|}}\omega_x(t-r,t)^{1/p}\frac{e^{\lambda_k\epsilon^{-1}r}}{|\lambda_k|^{-1}\epsilon}dr\notag\\
&+\|x\|_0\int_{\frac{\epsilon^{1/2}}{|\lambda_k|}}^t\frac{e^{\lambda_k\epsilon^{-1}r}}{|\lambda_k|^{-1}\epsilon}dr\notag\\&\le \omega_x(0,t)^{1/p}e^{\lambda_1\epsilon^{-1}t}+\omega_x\left(t-\frac{\epsilon^{1/2}}{|\lambda_1|},t\right)+\|x\|_{p\var}e^{-\epsilon^{1/2}},
\end{align}
where $\omega_x$ denotes a control over $x$, such as $\omega_x(s,t)=\|x\|_{p\var,[s,t]}^{1/p}$ (see \cite{friz2014course}).
At last, we need that $\sup_{\epsilon,\delta \in (0,1]}\|R_{(\Lambda^k_{\epsilon}(W^{k\H}),\Lambda^k_{\delta}(W^{k\H}))}\|_{1/2\H\var,[0,T]^2}\lesssim\|R_{W^{k,\H}}\|_{1/2\H\var,[0,T]^2}$, where we recall that $R$ is defined at \eqref{eq:CovGauss}. Notice that a similar inequality follows from \cite[Proposition 5.68]{friz2010multidimensional}.
Therefore, \cite[Theorem 5]{friz2014convergence} guarantees that
\begin{align}\label{i:smallness1st}
    \EE\big[\|W^{\epsilon,k,\H}&-W^{k,\H}\|_{p\var,[0,T]}^q\big]\lesssim \notag\\ &\sup_{t\in [0,T]}\EE\sx[|\Lambda_\epsilon^k(W^{k,\H})_t-W^{k,\H}_t|^2\dx]^{\frac{q}{2}\sx(1-\frac{1}{2\H\gamma}\dx)}\lesssim\epsilon^{\frac{2\H q}{2}\sx(1-\frac{1}{2\H\gamma}\dx)},
\end{align}
 with the latter inequality derived in the same manner as for \eqref{est:Interp1H>1/2}.
Therefore, by combining equation \eqref{i:compWise1st} and equation \eqref{i:smallness1st}, we have that
\begin{equation}\label{stima1}
    \EE\sx[\|\mathbb{Y}^{\epsilon,1}-\mathbb{W}^{\H,1}\|_{p\var,[0,T]}^q\dx]\lesssim \epsilon^{\frac{2\H q}{2}\sx(1-\frac{1}{2\H\gamma}\dx)}\overline{\sigma}^{\frac{q}{2}}.
\end{equation}
Regarding the second level of the rough paths, we have that
\begin{align}\label{i:compWise2st}
    \EE&\sx[\sx(\sup_{P\in\Pi}\sum_{s,t\in P}\|\mathbb{Y}^{\epsilon,2}\st-\mathbb{W}^{\H,2}\st)\|_{(H^\xi)^{\otimes 2}}^{\frac{p}{2}}\dx)^{2q/p}\dx]\notag\\&=\EE\sx[\sx(\sup_{P\in\Pi}\sum_{s,t\in P}\sx(\sum_{k,j} \frac{\sigma_k^2\sigma_j^2\|e_k\|_{H^{\xi}}^2\|e_j\|_{H^{\xi}}^2}{\lambda_k^2\lambda_j^2}|\mathbb{W}^{\epsilon,k,j,\H}\st-\mathbb{W}^{k,j,\H}\st|^2\dx)^{p/4}\dx)^{2q/p}\dx]\notag\\
        &\le \EE\sx[\sx(\sup_{P\in\Pi}\sum_{s,t\in P}\sum_{k,j} \frac{|\sigma_k\sigma_j\|e_k\|_{H^{\xi}}\|e_j\|_{H^{\xi}}|^{p/2}}{|\lambda_k\lambda_j|^{p/2}}|\mathbb{W}^{\epsilon,k,j,\H}\st-\mathbb{W}^{k,j,\H}\st|^{p/2}\dx)^{2q/p}\dx]\notag\\
        &\le \EE\sx[\sx(\sum_{k,j} \frac{|\sigma_k\sigma_j\|e_k\|_{H^{\xi}}\|e_j\|_{H^{\xi}}|^{p/2}}{|\lambda_k\lambda_j|^{p/2}}\sup_{P\in\Pi}\sum_{s,t\in P}|\mathbb{W}^{\epsilon,k,j,\H}\st-\mathbb{W}^{k,j,\H}\st|^{p/2}\dx)^{2q/p}\dx]\notag\\
        &\le 
        \hat{\sigma}^{\frac{2q-p}{p}}
        \sum_{k,j} \frac{|\sigma_k\sigma_j\|e_k\|_{H^{\xi}}\|e_j\|_{H^{\xi}}|^{p/2}}{|\lambda_k\lambda_j|^{p/2}}\EE\sx[\sx(\sup_{P\in\Pi}\sum_{s,t\in P}|\mathbb{W}^{\epsilon,k,j,\H}\st-\mathbb{W}^{k,j,\H}\st|^{p/2}\dx)^{2q/p}\dx]
\end{align}
where  $\mathbb{W}^{\epsilon,k,j,\H}$ (resp. $\mathbb{W}^{k,j,\H}$) denotes the 2nd level of the rough path associated with $\Lambda_\epsilon^k(W^{k,\H})$ (resp. $W^{k,\H}$), for $j=k$. Instead, if $j\neq k$
\begin{align*}
\mathbb{W}^{\epsilon,k,j,\H}&=\langle\pi_2(\Lambda_\epsilon^k(W^{k,\H}),\Lambda_\epsilon^j(W^{j,\H})),\delta_{1,2}\rangle_{\R^{2\times 2}},\\ \mathbb{W}^{k,j,\H}&=\langle\pi_2(W^{k,\H},W^{j,\H}),\delta_{1,2}\rangle_{\R^{2\times 2}}
\end{align*}
where $\pi_2(\Lambda_\epsilon^k(W^{k,\H}),\Lambda_\epsilon^j(W^{j,\H}))$ (resp. $\pi_2(W^{k,\H},W^{j,\H})$) denotes the 2nd level of the rough path associated with  $(\Lambda_\epsilon^k(W^{k,\H}),\Lambda_\epsilon^j(W^{j,\H}))$ (resp. $(W^{k,\H},W^{j,\H})$), and $\delta_{1,2}=v_1\otimes v_2$, for $\{v_1,v_2\}$ canonical $\R^2$'s basis.

By trivial generalizations of equation \eqref{LambdaCont} and equation \eqref{LambdaUnifConv}, we have that $(\Lambda_\epsilon^k,\Lambda_\epsilon^j)$ is continuous and satisfies uniform convergence as $\epsilon$ vanishes. Therefore, \cite[Theorem 5]{friz2014convergence} implies that
\begin{equation}\label{i:Smallness2nd}
\begin{aligned}
    \EE\sx[\|\mathbb{W}^{\epsilon,k,j,\H}-\mathbb{W}^{k,j,\H}\|_{p/2\var, [0,T]}^q\dx]\lesssim\epsilon^{\frac{2\H q}{2}\sx(1-\frac{1}{2\H\gamma}\dx)}.
\end{aligned}
\end{equation}
Then, equation \eqref{i:compWise2st} and equation \eqref{i:Smallness2nd} imply that
\begin{equation}\label{stima2}
    \EE\sx[\|\mathbb{Y}^{\epsilon,2}-\mathbb{W}^{\H,2})\|_{p/2\var,[0,T]}^{q}\dx]\lesssim \epsilon^{\frac{2\H q}{2}\sx(1-\frac{1}{2\H\gamma}\dx)}
        \hat{\sigma}^{\frac{2q}{p}}.
\end{equation}
\textit{Second step: Proof of equation \eqref{est:UniformRegularityURDH<1/2}.}

Let $\epsilon<\delta<1.$ Then,
\begin{align}\label{i:Holder1st}
    \EE&\sx[\|\mathbb{Y}^{\epsilon,1}-\mathbb{Y}^{\delta,1}\|_{p\var,[0,T]}^q\dx]\notag\\
    &\lesssim  \overline{\sigma}^{\frac{q}{2}-1}\sum_k \frac{\sigma_k^2\|e_k\|_{H^{\xi}}^2}{\lambda_k^2}\EE\sx[\|W^{\epsilon,k,\H}-W^{\delta,k,\H}\|^q_{p\var,[0,T]}\dx]\notag\\
    &\lesssim   \overline{\sigma}^{\frac{q}{2}-1}\sum_k \frac{\sigma_k^2\|e_k\|_{H^{\xi}}^2}{\lambda_k^2}\sup_{t\in [0,T]}\EE\sx[|W^{\epsilon,k,\H}_t-W^{\delta,k,\H}_t|^2\dx]^{\frac{q}{2}\sx(1-\frac{1}{2\H\gamma}\dx)},
\end{align}
where the first inequality follows from replicating equation \eqref{i:compWise1st}, while the last one is due to \cite[Corollary 15.32]{friz2010multidimensional} (specifically, \cite[Remark 15.33]{friz2010multidimensional} with $\rho=1/2\H$ and $\rho'=\gamma$). By following \eqref{smallnessEpsDelta} and by recalling that $z^{\epsilon,k_t}$ and $v^{\epsilon,\delta,k}_t$ are solutions to
\begin{align*}    dz^{\epsilon,k}_t&=\lambda_k\epsilon^{-1}z_t^{\epsilon,k}dt+dW^{k,\H}_t,\\
    dv^{\epsilon,\delta,k}_t&=\lambda_k\delta^{-1}v^{\epsilon,\delta,k}_tdt+\lambda_kz^{\epsilon,k}_t(\delta^{-1}-\epsilon^{-1})dt,
\end{align*}
we have that
\begin{align}\label{i:HolderSmall1st}
    \EE\sx[|W^{\epsilon,k,\H}_t-W^{\delta,k,\H}_t|^2\dx]&=\EE\sx[(v_t^{\epsilon,\delta,k})^2\dx]\notag\\
    &=\EE\sx[\sx(\int_0^te^{\lambda_k\delta^{-1}(t-r)}\lambda_kz_r^{\epsilon,k}(\delta^{-1}-\epsilon^{-1})dr\dx)^2\dx]\notag\\
    &\lesssim(\delta-\epsilon)^2\epsilon^{-4}\EE\sx[\sup_{t\in[0,T]}(z_t^{\epsilon,k})^2\dx]\delta^2\notag\\
    & \lesssim (\delta-\epsilon)^2\epsilon^{-4}.
\end{align}
Then, equation \eqref{i:Holder1st} and equation \eqref{i:HolderSmall1st} imply that
\begin{align}\label{stima3}
   \EE\Big[\|\mathbb{Y}^{\epsilon,1}&-\mathbb{Y}^{\delta,1}\|_{p\var,[0,T]}^q\Big]\notag\\
   &\lesssim \overline{\sigma}^{\frac q2-1}\sum_k \frac{\sigma_k^2\|e_k\|_{H^{\xi}}^2} {\lambda_k^2}\sx(|\epsilon-\delta|^2\epsilon^{-4}\dx)^{\frac q2\sx(1-\frac{1}{2\H\gamma}\dx)}\notag\\
   &\lesssim  \overline{\sigma}^{\frac q2}\sx(|\epsilon-\delta|^2\epsilon^{-4}\dx)^{\frac q2\sx(1-\frac{1}{2\H\gamma}\dx)}.
\end{align}

By a similar argument, we can obtain the following estimate on the second level of the rough path
\begin{align} \label{stima4}
        \EE\Big[\|\mathbb{Y}^{\epsilon,2}&-\mathbb{Y}^{\delta,2}\|_{p/2\var,[0,T]}^{q}\Big]\notag\\
        &\lesssim 
    \sum_{k,j} \frac{|\sigma_k\sigma_j\|e_k\|_{H^{\xi}}\|e_j\|_{H^{\xi}}|^{p/2}}{|\lambda_k\lambda_j|^{p/2}} \EE\sx[\|\mathbb{W}^{\epsilon,k,j,\H}-\mathbb{W}^{\delta,k,j,\H}\|_{p/2\var}^q\dx]\notag\\
    &\lesssim 
        \sum_{k,j} \frac{|\sigma_k\sigma_j\|e_k\|_{H^{\xi}}\|e_j\|_{H^{\xi}}|^{p/2}}{|\lambda_k\lambda_j|^{p/2}}\notag\\
        &\hspace{10pt}\times\sup_{t\in [0,T]}\EE\sx[|(W^{\epsilon,k,\H}_t,W^{\epsilon,j,\H}_t)-(W^{\delta,k,\H}_t,W^{\delta,j,\H}_t)|^2\dx]^{\frac q2\sx(1-\frac{1}{2\H\gamma}\dx)}\notag
        \\
        &\lesssim (\epsilon^{-4}|\epsilon-\delta|^2)^{\frac q2\sx(1-\frac{1}{2\H\gamma}\dx)}
\end{align}
At last, let $\Y^0=\W^\H$. Then for $0\le \epsilon\le\delta\le1$, we have that 
\begin{enumerate}
    \item if $2\epsilon<\delta$, then equations \eqref{stima1}, \eqref{stima2} imply that
    \begin{align*}
    \sum_{i=1}^2\EE\sx[\|\mathbb{Y}^{\epsilon,i}-\mathbb{Y}^{\delta,i}\|_{p/i\var,[0,T]}^{q}\dx]^{1/q}&\lesssim\epsilon^{\H \sx(1-\frac{1}{2\H\gamma}\dx)}+\delta^{\H \sx(1-\frac{1}{2\H\gamma}\dx)}\\
    &\lesssim |\epsilon-\delta|^{\H \sx(1-\frac{1}{2\H\gamma}\dx)};    
    \end{align*}
    \item if $\delta \in (\epsilon,2\epsilon)$ and $\epsilon\le(\delta-\epsilon)^{\nu}$, then equations \eqref{stima1}, \eqref{stima2} imply that
    \begin{align*}
    \sum_{i=1}^2\EE\sx[\|\mathbb{Y}^{\epsilon,i}-\mathbb{Y}^{\delta,i}\|_{p/i\var,[0,T]}^{q}\dx]^{1/q}&\lesssim\epsilon^{\H \sx(1-\frac{1}{2\H\gamma}\dx)}+\delta^{\H \sx(1-\frac{1}{2\H\gamma}\dx)}\\
    &\lesssim |\epsilon-\delta|^{\nu \H \sx(1-\frac{1}{2\H\gamma}\dx)};    
    \end{align*}
    \item if $\delta \in (\epsilon,2\epsilon)$ and $\epsilon>(\delta-\epsilon)^{\nu}$, then
    equations \eqref{stima3}, \eqref{stima4} imply that
    \begin{align*}
    \sum_{i=1}^2\EE\sx[\|\mathbb{Y}^{\epsilon,i}-\mathbb{Y}^{\delta,i}\|_{p/i\var,[0,T]}^{q}\dx]^{1/q}&\lesssim |\epsilon-\delta|^{\sx(1-\frac{1}{2\H\gamma}\dx)}\epsilon^{-2}\\
            &\lesssim|\epsilon-\delta|^{\sx(1-\frac{1}{2\H\gamma}\dx)-2\nu}.    
    \end{align*}        
\end{enumerate}
Then, choosing $\nu$ sufficiently small allows us to conclude using the Kolmogorov theorem.
\end{proof}
\begin{corollary}\label{cor:beta12}
     Let $1/3<\H<1/2$, $\alpha = 1/2+\H$, $q\geq 4$, $\gamma>1/2\H$ s.t. $2\H + 1/\gamma > 1$, and $3>p>2\gamma$. Then, as $\epsilon \to 0$, the canonical rough path lift of $(y_t^\epsilon)_t$, denoted by $\mathbf{Y}^\epsilon$, converges in $L^q(\Omega)$ to the canonical $p$-rough path on $H^\xi$ associated with $y^0_t\equiv0.$
    Moreover, the following map 
    \begin{equation*}
        \epsilon\in(0,1) \mapsto \mathbf{Y}^{\epsilon}\in L^q\left(\Omega,C^{p\var}(H^\xi)\times C^{p/2\var}_2((H^\xi)^{\otimes 2})\right)
    \end{equation*}
    is Holder-continuous and it holds that
    \begin{equation}\label{est:UniformRegularityURDH<1/2zeros}
        \mathbb{E}\left[\sup_{\epsilon\in (0,1)}\sum_{n=1}^2\|\mathbb{Y}^{\epsilon,n}\|_{p/n\var,[0,T]}^q\right]\lesssim 1.
        \end{equation}
\end{corollary}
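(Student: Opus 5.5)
The plan is to reduce Corollary \ref{cor:beta12} to Proposition \ref{prop_rough_pathless12} through the scaling relation between the two regimes. With $\alpha=\tfrac12+\H$ the prefactor in \eqref{def:y} is $\epsilon^{-\alpha+\H}=\epsilon^{-1/2}$. With $\alpha=1$ it is $\epsilon^{-1+\H}$. Denote by $\tilde y^\epsilon$ the process of Proposition \ref{prop_rough_pathless12} (the case $\alpha=1$) and by $\tilde{\mathbf Y}^\epsilon$ its canonical lift. Then
\begin{equation*}
y^\epsilon_t=\epsilon^{\frac12-\H}\,\tilde y^\epsilon_t .
\end{equation*}
Since the canonical lift is computed by Riemann--Stieltjes integration of a smooth path, it scales homogeneously under dilation $\delta_\kappa$. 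We therefore get $\mathbb Y^{\epsilon,1}=\epsilon^{\frac12-\H}\tilde{\mathbb Y}^{\epsilon,1}$ and $\mathbb Y^{\epsilon,2}=\epsilon^{1-2\H}\tilde{\mathbb Y}^{\epsilon,2}$, and $p$-variation norms scale accordingly. The canonical lift of $y^0\equiv0$ is the zero rough path.

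\textbf{Convergence.} By \eqref{est:UniformRegularityURDH<1/2},
\begin{equation*}
\mathbb E\Big[\sum_{n=1}^2\|\mathbb Y^{\epsilon,n}\|^q_{p/n\var,[0,T]}\Big]\lesssim \epsilon^{q(\frac12-\H)}\,\mathbb E\Big[\sup_{\epsilon}\sum_{n=1}^2\|\tilde{\mathbb Y}^{\epsilon,n}\|^q_{p/n\var}\Big]\lesssim\epsilon^{q(\frac12-\H)} .
\end{equation*}
Because $\H<\tfrac12$, the right-hand side tends to zero, which gives convergence in $L^q(\Omega)$ to the zero rough path. Here $\epsilon^{1-2\H}\le\epsilon^{\frac12-\H}$ for $\epsilon<1$. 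Estimate \eqref{est:UniformRegularityURDH<1/2zeros} follows in the same way, since $\epsilon^{\frac12-\H}\le1$ can be taken out of the supremum over $\epsilon$.

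\textbf{Hölder continuity in $\epsilon$.} For $\epsilon<\delta$ we write
\begin{equation*}
\mathbb Y^{\epsilon,1}-\mathbb Y^{\delta,1}=\epsilon^{\frac12-\H}\big(\tilde{\mathbb Y}^{\epsilon,1}-\tilde{\mathbb Y}^{\delta,1}\big)+\big(\epsilon^{\frac12-\H}-\delta^{\frac12-\H}\big)\tilde{\mathbb Y}^{\delta,1},
\end{equation*}
and analogously for the second level with exponent $1-2\H$. The first term is controlled by the Hölder continuity already established in Proposition \ref{prop_rough_pathless12}. The second term is controlled by $|\epsilon^{a}-\delta^{a}|\le|\epsilon-\delta|^{a}$ for $a\in(0,1]$, together with the uniform bound \eqref{est:UniformRegularityURDH<1/2}. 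This yields an estimate of the form $\mathbb E[\cdots]^{1/q}\lesssim|\epsilon-\delta|^{\kappa}$ for some $\kappa>0$. Kolmogorov's theorem in the $\epsilon$ variable then applies exactly as at the end of the proof of Proposition \ref{prop_rough_pathless12}, possibly after increasing $q$.

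\textbf{Main obstacle.} The only delicate point is making sure the scaling identity is legitimate. One must check that the process $w^\epsilon$ is the same in both regimes, which holds because its equation in \eqref{system_prelimit} does not involve $\alpha$. One must also check that the lift is the canonical one for the smooth path $y^\epsilon$, which holds because $y^\epsilon$ is $C^1$ in time. Both facts are immediate from the definitions, so no new probabilistic estimate is required.
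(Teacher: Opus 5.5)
Your proposal is correct and follows the paper's own argument: the paper's proof is exactly the observation $y^\epsilon_t=\epsilon^{\frac12-\H}\,\overline{y}^\epsilon_t$, where $\overline{y}^\epsilon$ is the process for $\alpha=1$, followed by an appeal to \autoref{prop_rough_pathless12}. What you add is the step-by-step justification the paper leaves implicit. This covers the dilation of the two lift levels by $\epsilon^{\frac12-\H}$ and $\epsilon^{1-2\H}$, the convergence to zero and the uniform bound obtained from \eqref{est:UniformRegularityURDH<1/2}, and the splitting used for H\"older continuity in $\epsilon$.
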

\begin{proof}
    Let 
    $\overline{y}^\epsilon_t=\int_0^t\epsilon^{\H-1}w^{\epsilon}_sds.$
    Then for $\alpha=1/2+\H$, we have that
    \begin{equation*}
y_t^\epsilon=\int_0^t\epsilon^{-1/2}w^{\epsilon}_sds=\epsilon^{1/2-\H}\overline{y}^\epsilon_t.
    \end{equation*}
    Therefore, \autoref{prop_rough_pathless12} implies the claim.
\end{proof}

\section{A Priori Estimates}\label{sec_apriori}
The goal of this section is to prove the following lemma
\begin{lemma}\label{lemma_a_priori}
 Let $\frac1\H\in[N,N+1)\cap(1,3)$ and $p\in (\frac{1}{\H},N+1)$, there exists $\kappa>0$ sufficiently small such that the following inequalities hold 
\begin{align}
    \label{EnergyEst}\sup_{t\in [0,T]}\norm{u^{\epsilon}_t}^2+2\nu\int_0^T \norm{\nabla u^{\epsilon}_s}^2 ds & \lesssim 1\quad \mathbb{P}-a.s.,\\
\label{EstOnR}\mathbb{E}\left[\log\left(1+\int_0^T \norm{r^{\epsilon}_s}_{H^{\gamma}}^2ds\right)\right]&\lesssim 1\\
\label{EstOnPvar}\mathbb{E}\left[\log\left(1+\norm{u^{\epsilon}}^\kappa_{p-var,[0,T],H^{-\xi+1}}\right)\right]&\lesssim 1    
\end{align}
uniformly in $\epsilon\in (0,1)$.
    
\end{lemma}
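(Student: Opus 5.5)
We would prove the three bounds in the order stated, each using the previous one.

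\emph{Energy bound \eqref{EnergyEst}.} Testing the $u^\epsilon$ equation against $u^\epsilon$ is formally exact, because every transport term is skew-symmetric: $\langle b(v,u^\epsilon),u^\epsilon\rangle=0$ for any divergence-free $v$. This covers $v=u^\epsilon$, $v=r^\epsilon$ and the singular field $v=\epsilon^{-\alpha+\H}w^\epsilon$. In the rough formulation of \autoref{def_rough_sol} we cannot test directly. Instead we would use the standard unbounded rough driver energy argument (as in \cite{debussche2023rough,hocquet2025unbounded}). We write the equation for the tensor $u^\epsilon\otimes u^\epsilon$, driven by the URD $\Gamma=\A^\epsilon\otimes I+I\otimes\A^\epsilon$, and invoke the remainder estimate for $u^{\natural}$ on small intervals, controlled by $\omega_{\A^\epsilon}$ and the drift. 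We then evaluate at the diagonal using smoothing operators $J^\eta$ and the commutator (renormalizability) property of transport drivers. Because $b(\mathbb Y^{1,\epsilon}_{s,t},\cdot)$ and $b^{(2)}$ are antisymmetric/geometric, the rough contribution to $\|u^\epsilon\|^2$ vanishes identically. For fixed $\epsilon$ the path $y^\epsilon$ is $C^1$, so the identity $\|u^\epsilon_t\|^2+2\nu\int_0^t\|\nabla u^\epsilon\|^2=\|u^\epsilon_0\|^2$ can be justified directly from the Galerkin construction. This gives \eqref{EnergyEst} pathwise, deterministically and uniformly in $\epsilon$.

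\emph{Bound \eqref{EstOnR} on the remainder.} We test the $r^\epsilon$ equation with $r^\epsilon$. By \eqref{Ass2onC}, $\epsilon^{-1}\langle Cr^\epsilon,r^\epsilon\rangle\le -c\epsilon^{-1}\|r^\epsilon\|_{H^\gamma}^2$, and the transport terms $b(\cdot,r^\epsilon)$ drop out. The remaining terms are the following:
\begin{itemize}
\item $\langle A w^\epsilon,r^\epsilon\rangle$ and $\langle b(u^\epsilon+v^\epsilon,w^\epsilon),r^\epsilon\rangle$, each multiplied by $\epsilon^{-\alpha+\H}$;
\item $\epsilon^{-2\alpha+2\H}\langle b(w^\epsilon,w^\epsilon),r^\epsilon\rangle$;
\item the cross term $\epsilon^{-\alpha+\H}\langle b(r^\epsilon,w^\epsilon),r^\epsilon\rangle$.
\end{itemize}
Since $w^\epsilon\in H^\xi$ with $\xi>d/2+2$, each term can be bounded by $\|w^\epsilon\|_{H^\xi}$ (or its square) times norms of $r^\epsilon$ and $u^\epsilon$. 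We absorb into the damping via Young's inequality, using \eqref{EnergyEst} for $u^\epsilon$. Multiplying by $\epsilon$, we aim for
\[
\epsilon\|r^\epsilon_t\|^2+c\int_0^t\|r^\epsilon\|_{H^\gamma}^2\lesssim \epsilon\|v_0^\epsilon\|^2+\int_0^t\big(\epsilon^{1-2\alpha+2\H}\|w^\epsilon\|_{H^\xi}^2+\epsilon^{1-2\alpha+2\H}\|w^\epsilon\|_{H^\xi}^2\|r^\epsilon\|^2+\dots\big)ds,
\]
and then apply Gronwall. With $\alpha\le \frac12+\H$ the exponent satisfies $1-2\alpha+2\H\ge0$, so the prefactors are bounded.

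The main obstacle is the cross term $\epsilon^{-\alpha+\H}\langle b(r^\epsilon,w^\epsilon),r^\epsilon\rangle$. Gronwall then produces a factor $\exp\big(C\epsilon^{-\alpha+\H}\int_0^T\|w^\epsilon\|_{H^\xi}ds\big)$, or after Young's inequality $\exp\big(C\epsilon^{1-2\alpha+2\H}\int_0^T\|w^\epsilon\|^2_{H^\xi}\big)$, and this is not integrable uniformly in $\epsilon$. This is exactly why the statement only asks for a $\log$ moment. Taking $\log(1+\cdot)$ turns the exponential into a linear term, which \autoref{hp_stoch_conv} controls in expectation. The cubic and $b(w^\epsilon,w^\epsilon)$ terms are handled by the same moment bounds.

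\emph{Bound \eqref{EstOnPvar}.} Finally, for the $p$-variation in $H^{-\xi+1}$ we use the equation for $u^\epsilon_{s,t}$ in \autoref{def_rough_sol}. The drift $\int_s^t Au^\epsilon-b(u^\epsilon+r^\epsilon,u^\epsilon)$ has $1$-variation in $H^{-\xi+1}$ bounded by $\int\|u^\epsilon\|_{H^1}+\|r^\epsilon\|\,\|u^\epsilon\|$, which is controlled by \eqref{EnergyEst} and by \eqref{EstOnR} through Cauchy--Schwarz. The rough terms $\A^{k,\epsilon}_{s,t}u^\epsilon_s$ are bounded by $\omega_{\A^\epsilon}(s,t)^{k/p}\sup\|u^\epsilon\|$. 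By the a priori URD remainder estimate, $u^{\natural,\epsilon}$ is bounded polynomially in $\omega_{\A^\epsilon}$ and the drift control, with the usual subdivision into intervals where $\omega_{\A^\epsilon}$ is small; the number of such intervals is polynomial in $\|\mathbf Y^\epsilon\|$. This yields
\[
\|u^\epsilon\|_{p\var}\lesssim \big(1+\|\mathbf Y^\epsilon\|_{p\var}\big)^{M}\Big(1+\int_0^T\|r^\epsilon\|_{H^\gamma}^2\Big)^{M}.
\]
Taking $\log(1+\|u^\epsilon\|^\kappa)$ with $\kappa$ small and using the uniform moments \eqref{est:UniformRegularityURDH>1/2}, \eqref{est:UniformRegularityURDH<1/2} and \eqref{est:UniformRegularityURDH<1/2zeros} together with \eqref{EstOnR}, we obtain \eqref{EstOnPvar}.
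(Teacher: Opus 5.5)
\paragraph{Overview.} In the two regimes of \autoref{main_thm}, where $\alpha=1\wedge(\frac12+\H)\le\frac12+\H$, your argument is the paper's:
\begin{itemize}
\item for \eqref{EnergyEst}, the Galerkin energy estimate for $u^\epsilon$. The URD tensorization detour is unnecessary, and in $d=3$ the Galerkin limit only gives an inequality, which suffices.
\item for \eqref{EstOnR}, an energy estimate for $\epsilon\norm{r^\epsilon}^2$, closed by Gronwall and then $\log$-moments.
\item for \eqref{EstOnPvar}, the URD remainder bound on intervals where $\omega_\epsilon$ is small, followed by gluing.
\end{itemize}

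\paragraph{Missing regime.} The genuine gap is that this lemma is also the a priori estimate behind \autoref{trivial_noise}, i.e.\ $\alpha=1$ with $\H\in(\frac13,\frac12)$. The paper proves this as a separate third case, and your standing restriction $\alpha\le\frac12+\H$ excludes it. There $1-2\alpha+2\H=2\H-1<0$, and your mechanism breaks in two places.
\begin{itemize}
\item The forcing produced by $2\epsilon^{2\H-1}\langle b(w^\epsilon,w^\epsilon),r^\epsilon\rangle$ is of size $\epsilon^{4\H-2}\norm{w^\epsilon}_{H^\xi}^4$.
\item The quadratic Young inequality on the cross term $2\epsilon^{\H}\langle b(r^\epsilon,w^\epsilon),r^\epsilon\rangle$ gives the Gronwall exponent $\epsilon^{2\H-1}\int_0^T\norm{w^\epsilon}_{H^\xi}^2ds$. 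Its expectation diverges, so the resulting $\log$-moment bound is not uniform in $\epsilon$.
\end{itemize}
Two ingredients are missing.
\begin{itemize}
\item The structural assumption $f\cdot\nabla f=0$ gives $b(w^\epsilon,w^\epsilon)\equiv0$.
\item A cubic Young inequality, $2\epsilon^{\H}\norm{\nabla w^\epsilon}_{L^\infty}\norm{r^\epsilon}^2\le\delta\norm{r^\epsilon}^2+C_\delta\,\epsilon^{3\H}\norm{\nabla w^\epsilon}_{L^\infty}^3\norm{r^\epsilon}^2$, turns the exponent into $\epsilon^{3\H-1}\int_0^T\norm{w^\epsilon}_{H^\xi}^3ds$. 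This is bounded in $L^1$ uniformly by \autoref{hp_stoch_conv}, precisely because $\H>\frac13$.
\end{itemize}
The remaining terms only produce forcing $\epsilon^{2\H}\norm{w^\epsilon}_{H^\xi}^2$. The rest of your argument, the $\log$-moments and \eqref{EstOnPvar}, then goes through unchanged.

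\paragraph{Powers of $\epsilon$.} You also need to fix the $\epsilon$-bookkeeping in the regimes you do treat.
\begin{itemize}
\item After testing with $r^\epsilon$ and multiplying by $\epsilon$, the cross term is $2\epsilon^{1-\alpha+\H}\langle b(r^\epsilon,w^\epsilon),r^\epsilon\rangle\le\delta\norm{r^\epsilon}^2+C_\delta\,\epsilon^{1-2\alpha+2\H}\norm{w^\epsilon}_{H^\xi}^2\,\epsilon\norm{r^\epsilon}^2$.
\item So the coefficient of $\norm{w^\epsilon}_{H^\xi}^2\norm{r^\epsilon}^2$ in your display must be $\epsilon^{2-2\alpha+2\H}$, not $\epsilon^{1-2\alpha+2\H}$. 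As written, Gronwall for $\epsilon\norm{r^\epsilon}^2$ would produce $\exp\bigl(C\epsilon^{-2\alpha+2\H}\int_0^T\norm{w^\epsilon}_{H^\xi}^2ds\bigr)$.
\item For the same reason, your first candidate factor $\exp\bigl(C\epsilon^{-\alpha+\H}\int_0^T\norm{w^\epsilon}_{H^\xi}ds\bigr)$ must be discarded. It is not merely non-integrable: its $\log$ is of order $\epsilon^{-(\alpha-\H)}$.
\end{itemize}
The $\log$ trick works only because, with the right Young inequality, the exponent carries a nonnegative power of $\epsilon$. Your second factor, $\exp\bigl(C\epsilon^{1-2\alpha+2\H}\int_0^T\norm{w^\epsilon}_{H^\xi}^2ds\bigr)$, is the correct one.
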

We split this section in two parts. In the first one we do compactness in space, therefore we focus on \eqref{EnergyEst} and \eqref{EstOnR}. Instead, in the second one compactness in time, namely we prove \eqref{EstOnPvar}. While the first part does not use the rough part formulation \autoref{sec_rough_formulation}, this will be crucial in the second part of the argument. In order to ease the notation we set $\nu=1$. 

\subsection{Space Compactness}\label{sec_space_compactness}
For $\epsilon>0$ the integral appearing in equation \eqref{e:equation1} are standard Lebesgue integral and moreover the solutions have been obtained by Galerkin approximation. Therefore it follows immediately that
\begin{align}\label{energy_estimate}
    \sup_{t\in [0,T]}\norm{u^{\epsilon}_t}^2+2\int_0^T \norm{\nabla u^{\epsilon}_s}^2 ds\leq 2\norm{u_0}^2.
\end{align}
Now we want to estimate $r^{\epsilon}$. This analysis depends to the case we are considering $\H>1/2$ or $\H<1/2.$
\subsubsection*{Case $\H<\frac{1}{2},\ \alpha=\frac{1}{2}+\H$.} Our solutions are obtained by Galerkin approximation therefore, by \eqref{energy_estimate}, H\"older's and Young's  inequalities they satisfy the following relation
\begin{align*}
    \epsilon \frac{d\norm{r^{\epsilon}}^2}{dt}+2\epsilon \norm{\nabla r^{\epsilon}}^2+\norm{C^{1/2}r^{\epsilon}}^2& \leq 2\epsilon^{1/2}\langle b(u+\epsilon^{-1/2}w^{\epsilon}+r^{\epsilon},w^{\epsilon}),r^{\epsilon}\rangle\\ & \leq \epsilon \norm{\nabla r^{\epsilon}}^2+\norm{\nabla w^{\epsilon}}^2+\delta \norm{r^{\epsilon}}^2\\
    &+C_{\delta}\epsilon \norm{u_0}^2\norm{w^{\epsilon}}_{L^{\infty}}^2 +C_{\delta}\norm{w^\epsilon}_{L^4}^4\\
    &+\epsilon C_{\delta}\norm{r^{\epsilon}}^2\norm{w^{\epsilon}}_{L^{\infty}}^2-2\epsilon^{1/2}\langle \nabla w^{\epsilon},\nabla r^{\epsilon}\rangle.
\end{align*}
   Hence, by selecting $\delta$ sufficiently small and using the coercivity of $C$, we deduce that
    \begin{align}\label{inequality_gronwall_1}
    \epsilon \frac{d\norm{r^{\epsilon}}^2}{dt}+\norm{r^{\epsilon}}_{H^{\gamma}}^2 & \lesssim \norm{w^{\epsilon}}_{H^{\xi}}^2+\norm{w^{\epsilon}}_{L^4}^4+\epsilon \norm{r^{\epsilon}}^2\norm{w^{\epsilon}}_{H^{\xi}}^2.     
    \end{align}
    Therefore Gr\"onwall's lemma implies 
    \begin{align*}
        \epsilon \sup_{t\in [0,T]}\norm{r^{\epsilon}}^2& \leq \left(\epsilon \norm{r_0^{\epsilon}}^2+C_1\int_0^T \norm{w^{\epsilon}_s}_{H^{\xi}}^2+\norm{w^{\epsilon}_s}_{L^{4}}^4 ds\right)e^{C_2\int_0^T \norm{w^{\epsilon}_s}_{H^{\xi}}^2 ds}.
    \end{align*}
    Plugging this information in \eqref{inequality_gronwall_1} we also obtain
    \begin{align*}
        \int_0^T \norm{r^{\epsilon}_s}_{H^{\gamma}}^2ds& \leq \left(\epsilon \norm{r_0^{\epsilon}}^2+C_1\int_0^T \norm{w^{\epsilon}_s}_{H^{\xi}}^2+\norm{w^{\epsilon}_s}_{L^{4}}^4 ds\right)\\
        &\times\left(1+e^{C_2 \int_0^T \norm{w^{\epsilon}_s}_{H^{\xi}}^2 ds}\int_0^T\norm{w^{\epsilon}_s}_{H^{\xi}}^2 ds \right).
    \end{align*}
    In particular, it holds
    \begin{align*}
        \mathbb{E}\left[\log\left(1+\int_0^T \norm{r^{\epsilon}_s}_{H^{\gamma}}^2ds\right)\right]& \leq \mathbb{E}\left[\log\left(1+\epsilon \norm{r_0^{\epsilon}}^2+C_1\int_0^T \norm{w^{\epsilon}_s}_{H^{\xi}}^2+\norm{w^{\epsilon}_s}_{L^{4}}^4 ds\right)\right]\\ & +\mathbb{E}\left[\log\left(1+\frac{e^{C_2 \int_0^T \norm{w^{\epsilon}_s}_{H^{\xi}}^2 ds}\int_0^T\norm{w^{\epsilon}_s}_{H^{\xi}}^2 ds}{2}\right)\right]+\log2\\ & \lesssim 1+\epsilon \norm{r_0^{\epsilon}}^2+\sup_{t\in [0,T]}\mathbb{E}\left[\norm{w^{\epsilon}_t}_{H^{\xi}}^4\right]\\
        & +\mathbb{E}\left[\log\left(1+\frac{e^{C_2 \int_0^T \norm{w^{\epsilon}_s}_{H^{\xi}}^2 ds}}{2}\right)\one_{\left\{e^{C_2 \int_0^T \norm{w^{\epsilon}_s}_{H^{\xi}}^2 ds}\leq 1\right\}}\right]\\ & +\mathbb{E}\left[\log\left(1+\frac{e^{C_2 \int_0^T \norm{w^{\epsilon}_s}_{H^{\xi}}^2 ds}}{2}\right)\one_{\left\{e^{C_2 \int_0^T \norm{w^{\epsilon}_s}_{H^{\xi}}^2 ds}> 1\right\}}\right]\\ & \lesssim 1+\epsilon \norm{r_0^{\epsilon}}^2+\sup_{t\in [0,T]}\mathbb{E}\left[\norm{w^{\epsilon}_t}_{H^{\xi}}^4\right]
    \end{align*}
which is bounded, uniformly in $\epsilon$ by \autoref{hp_stoch_conv}.
In conclusion we showed
\begin{align}\label{compactness_reminder}
   \sup_{\epsilon\in (0,1)}\mathbb{E}\left[\log\left(1+\int_0^T \norm{r^{\epsilon}_s}_{H^{\gamma}}^2ds\right)\right]&\lesssim 1. 
\end{align}
\subsubsection*{Case $\H>\frac{1}{2}$}
Our solutions are obtained by Galerkin approximation therefore, by \eqref{energy_estimate}, H\"older's and Young's  inequalities they satisfy the following relation
\begin{align*}
    \epsilon \frac{d\norm{r^{\epsilon}}^2}{dt}+2\epsilon \norm{\nabla r^{\epsilon}}^2+\norm{C^{1/2}r^{\epsilon}}^2& \leq -2\epsilon^{\H}\langle \nabla w^{\epsilon},\nabla r^{\epsilon}\rangle\\
    &+2\epsilon^{\H}\langle b(u+\epsilon^{-1+\H}w^{\epsilon}+r^{\epsilon},w^{\epsilon}),r^{\epsilon}\rangle\\ & \leq \epsilon \norm{\nabla r^{\epsilon}}^2+\delta \norm{r^{\epsilon}}^2+C_{\delta}\epsilon^{2\H} \norm{u_0}^2\norm{\nabla w^{\epsilon}}_{L^{\infty}}^2 \\
    &+\epsilon^{2\H-1}\norm{\nabla w^{\epsilon}}^2+C_{\delta}\epsilon^{4\H-2}\norm{w^\epsilon}_{L^4}^2\norm{\nabla w^{\epsilon}}_{L^{\infty}}^2\\
    &+\epsilon C_{\delta}\norm{r^{\epsilon}}^2\norm{\nabla w^{\epsilon}}_{L^{\infty}}^2.
\end{align*}
    Therefore, up to choosing $\delta$ sufficiently small, due to the coercivity of $C$ we obtain
    \begin{align*}
    \epsilon \frac{d\norm{r^{\epsilon}}^2}{dt}+\norm{r^{\epsilon}}_{H^{\gamma}}^2 & \lesssim \norm{w^{\epsilon}}_{H^{\xi}}^4+\norm{w^{\epsilon}}_{L^4}^4+\epsilon \norm{r^{\epsilon}}^2\norm{w^{\epsilon}}_{H^{\xi}}^2.     
    \end{align*}
    Now the argument follows verbatim as in the previous case and we omit details.
\subsubsection*{Case $\H<\frac{1}{2},\ \alpha=1$ and simplified covariance}
Recall that we are in the framework of \autoref{trivial_noise}, therefore $b(w^{\epsilon},w^{\epsilon})\equiv 0$.
Our solutions are obtained by Galerkin approximation therefore, by \eqref{energy_estimate}, H\"older's and Young's  inequalities they satisfy the following relation
\begin{align*}
    \epsilon \frac{d\norm{r^{\epsilon}}^2}{dt}+2\epsilon \norm{\nabla r^{\epsilon}}^2+\norm{C^{1/2}r^{\epsilon}}^2& \leq 2\epsilon^{\H}\langle \Delta w^{\epsilon},r^{\epsilon}\rangle+2\epsilon^{\H}\langle b(u+r^{\epsilon},w^{\epsilon}),r^{\epsilon}\rangle\\ & \leq C_{\delta}\epsilon^{2\H}\norm{\Delta w^{\epsilon}}^2+C_{\delta}\epsilon^{2\H} \norm{u_0}^2\norm{\nabla w^{\epsilon}}_{L^{\infty}}^2\\ & +\epsilon^{3\H} C_{\delta}\norm{r^{\epsilon}}^2\norm{\nabla w^{\epsilon}}_{L^{\infty}}^{3/2}+\delta \norm{r^{\epsilon}}^2.
\end{align*}
    Therefore, up to choosing $\delta$ sufficiently small, due to the coercivity of $C$ we obtain
    \begin{align*}
    \epsilon \frac{d\norm{r^{\epsilon}}^2}{dt}+\norm{r^{\epsilon}}_{H^{\gamma}}^2 & \lesssim \norm{w^{\epsilon}}_{H^{\xi}}^2+\epsilon \norm{r^{\epsilon}}^2\norm{w^{\epsilon}}_{H^{\xi}}^{3/2}.     
    \end{align*}
    Now the argument follows verbatim as in the first case and we omit details.

    \subsection{Time Compactness}

Up to some minor technical modifications, this part of the argument closely follows \cite[Section 8.1–8.2]{debussche2023rough}. Therefore, we only sketch the main steps of the proof and refer the reader to \cite[Section 8.1–8.2]{debussche2023rough} for all omitted details. \\
Recall that $\frac1\H\in[N,N+1)\cap(1,3)$ and $p\in (\frac{1}{\H},N+1)$. The ultimate objective of this section is to analyse the behaviour of
\begin{equation*}
    \omega_{u^{\epsilon}}(s,t):=\norm{ u^{\epsilon}}^p_{p\var,[s,t],H^{-\xi+1}}.
\end{equation*}
In light of the a priori bounds from the previous subsection together with \autoref{prop_rough_pathless12} and \autoref{cor:beta12}, we do not need to separate the cases $\alpha=1$ and $\alpha=\frac{1}{2}+\H.$\\
We begin by examining the increments of $u^{\epsilon,\sharp}$. We would indentify a uniform control in $\epsilon$ over
\begin{equation*}
    \omega_{\epsilon,\natural}(s,t)=|u^{\epsilon,\natural}|_{p/(N+1)\var,[s,t],H^{-3}}^{p/(N+1)}.
\end{equation*}

Let us recall some estimates on the drift terms 
\begin{equation*}
  \mu^{\epsilon,1}=\int_s^t \Delta u^\epsilon_\theta-b(u_\theta^\epsilon,u^\epsilon_\theta)\,d\theta,\quad\mu^{\epsilon,2}=-\int_s^tb(r^\epsilon_\theta,u^\epsilon_\theta)\,d\theta.  
\end{equation*}
Indeed,
\begin{align}\label{estimate_first_variation}
\norm{\delta\mu^{\epsilon,1}_{st}}_{H^{-1}}& \lesssim \int_s^t 1+\norm{\nabla u^{\epsilon}_\tau}^2 d\tau,\notag\\ \quad \norm{\delta\mu^{\epsilon,1}_{st}}_{H^{-\xi+1}} &\lesssim|t-s|\left(1+\norm{u^{\epsilon}_0}\right)^2.
\end{align}
    Moreover for each $\phi \in L^\infty_x$ it holds
    \begin{align}\label{estimate_second_variation}
        \| \delta\mu^{\epsilon,2}_{st}(\phi)\|\lesssim \norm{\phi}_{L^{\infty}}\left(\int_s^t \norm{r^{\epsilon}_\tau}^2d\tau\right)^{1/2}\left(\int_s^t \norm{\nabla u^{\epsilon}_\tau}^2d\tau\right)^{1/2}.
    \end{align}
    Therefore, denoting by $\omega_{\mu^{\epsilon}}(s,t)$ the control associated to the $C^{1-var}([0,T];H^{-\xi+2})$ norm of $\mu^{\epsilon}=\mu^{\epsilon,1}+\mu^{\epsilon,2}$ we have
    \begin{align*}
        \omega_{\mu^{\epsilon}}(s,t)=\int_s^t 1+\norm{\nabla u^{\epsilon}_\tau}^2 d\tau+\left(\int_s^t \norm{r^{\epsilon}_\tau}^2d\tau\right)^{1/2}\left(\int_s^t \norm{\nabla u^{\epsilon}_\tau}^2d\tau\right)^{1/2}.
    \end{align*}
    In particular, \eqref{EnergyEst} and \eqref{EstOnR} imply that for each $\alpha\leq 2$ and $C>0$ we have  
\begin{align}\label{stimalog1}
    \sup_{\epsilon\in (0,1)}\mathbb{E}\left[\log\left(1+C\omega_{\mu^{\epsilon}}(0,T)^{\alpha}\right)\right] \leq \sup_{\epsilon\in (0,1)}\mathbb{E}\left[\log\left(C_1\left(1+\left(\int_0^T\| r^{\epsilon}_\tau\|^2 d\tau\right)^{\alpha/2}\right)\right)\right]\\ \leq \log(C_1)+\sup_{\epsilon\in (0,1)}\mathbb{E}\left[\log\left(1+\int_0^T\| r^{\epsilon}_\tau\|^2 d\tau\right)\right]<+\infty ,\label{estimate_log_increments}
\end{align}
where $C_1$ is a positive constant depending on $\alpha,\,T$ and $\|u_0\|.$
Before we proceed to estimate $\omega_{\epsilon,\natural}(s,t)$, we first recall that the condition $\xi>\frac{d}{2}+2$ (see \cite{debussche2023rough}) ensures that
\begin{align}\label{stimaExtraURD}
    \|\mathbb{A}^{1,\epsilon}\st\phi\|_{L^\infty}&\lesssim \|\phi\|_{3}\|\mathbb{Y}^{\epsilon,1}\st\|_{H^\xi}\le\|\phi\|_{H^3}\,\omega_\epsilon(s,t)^{1/p},\\\|\mathbb{A}^{2,\epsilon}\st\phi\|_{L^\infty}&\lesssim \|\phi\|_{H^3}\|\mathbb{Y}^{\epsilon,2}\|_{H^\xi\otimes H^\xi}\le\|\phi\|_{H^\xi }\,\omega_\epsilon(s,t)^{2/p},
\end{align}
where $\omega_\epsilon(s,t)=\|\mathbb{Y}^{\epsilon,1}\|_{p\var,[s,t]}^{p}+\uno_{\{N=2\}}\|\mathbb{Y}^{\epsilon,2}\|_{p/2\var,[s,t]}^{p/2}$.

Now, let $(J^{\eta})_{\eta\in(0,1]}$ be the smoothing operators introduced at \cite{hofmanova2019navier}. Then for $s<\theta<t$, we have that for $\phi \in H^3$
\begin{align*}
    |\delta u^{\epsilon,\natural}_{s,\theta,t}(\phi)|&\le |\sx[\sx(\A^{1,\epsilon}_{\theta,t} \sx(u_{s,\theta}^{\epsilon}-\uno_{\{N=2\}}\A^{1,\epsilon}_{s,\theta}u^{\epsilon}_s\dx)\dx)+\uno_{\{N=2\}}\A^{2,\epsilon}_{\theta,t}u^{\epsilon}_{s,\theta}\dx](J^{\eta}\phi)|\\
        &+|\sx[\sx(\A^{1,\epsilon}_{\theta,t} \sx(u_{s,\theta}^{\epsilon}-\uno_{\{N=2\}}\A^{1,\epsilon}_{s,\theta}u^{\epsilon}_s\dx)\dx)+\uno_{\{N=2\}}\A^{2,\epsilon}_{\theta,t}u^{\epsilon}_{s,\theta}\dx](\phi-J^{\eta}\phi)|\\
        &\lesssim \omega_\epsilon^{1/p}(s,t)\eta\sx(\eta+\uno_{\{N=2\}}\omega_\epsilon^{1/p}(s,t)\dx)\|\phi\|_3+|\mu^{\epsilon}_{s,\theta}(\A^{1,\epsilon,*}_{\theta,t}J^\eta\phi)|\\
        &+|\sx[\A^{1,\epsilon}_{s,\theta}u_s^\epsilon-\uno_{\{N=2\}}\A^{1,\epsilon}_{s,\theta}u^{\epsilon}_s+\uno_{\{N=2\}}\A^{2,\epsilon}_{\theta,t}u^{\epsilon}_{s}
        +u^{\epsilon,\natural}_{s,\theta}\dx](\A^{1,\epsilon,*}_{\theta,t}J^\eta\phi)|\\
        &+|\sx[\mu^{\epsilon}_{s,\theta}+\A^{1,\epsilon}_{s,\theta}u_s^\epsilon+\uno_{\{N=2\}}\A^{2,\epsilon}_{\theta,t}u^{\epsilon}_{s}
        +u^{\epsilon,\natural}_{s,\theta}\dx](\A^{2,\epsilon,*}_{\theta,t}J^\eta\phi)|.
\end{align*}
Therefore, 
\begin{align*}
        \frac{|\delta u^{\epsilon,\natural}_{s,\theta,t}(\phi)|}{\|\phi\|_{H^3}}&\lesssim \omega_\epsilon(s,t)^{1/p}\eta^2+\omega_{\epsilon,\natural}^{\frac{N+1}p}\omega_\epsilon^{1/p}(s,t)\eta^{-1}+\omega_\epsilon^{1/p}\omega_{\mu^\epsilon}(s,t)+\uno_{\{N=1\}}\omega_{\epsilon}^{\frac{N+1}{p}}(s,t)\\
    &+\uno_{\{N=2\}}\big[w_\epsilon(s,t)^{2/p}\eta+\omega_\epsilon(s,t)^{3/p}+\omega_\epsilon^{2/p}\omega_{\mu^\epsilon}(s,t)+\omega_{\mu^\epsilon}\omega_{\epsilon}^{2/p}(s,t)\eta^{3-\xi}\\
    &+\omega_\epsilon(s,t)^{4/p}\eta^{-1}+\omega_{\epsilon,\natural}^{\frac{N+1}{p}}\omega_\epsilon^{2/p}(s,t)\eta^{-2}\big],
\end{align*}
where we have used \eqref{estimate_first_variation}, \eqref{estimate_second_variation} and \eqref{stimaExtraURD}. Let $\eta=\omega_\epsilon(s,t)^{1/p}\lambda$ and  consider $\lambda$ s.t. $c(\lambda^{-1}+\lambda^{-2})\le1/2$, where $c$ denotes the implicit constant of the previous inequality. Then for all $(s,t)$ s.t. $\omega_\epsilon^{1/p}(s,t)\le\frac{1}{2\lambda}$, \cite[Corollary B.2]{hofmanova2019navier} implies that
\begin{align}\label{stimaRemainder}
    \omega_{\epsilon,\natural}(s,t)&\lesssim \omega_\epsilon(s,t)+\omega_\epsilon^{\frac{1}{N+1}}\omega_{\mu^\epsilon}^{\frac{p}{N+1}}(s,t)\notag\\
    &+\uno_{\{N=2\}}\Big[\omega_\epsilon^{\frac{2}{N+1}}\omega_{\mu^\epsilon}^{\frac{p}{N+1}}
    +\omega_{\mu^\epsilon}^{\frac{p}{N+1}}\omega_\epsilon^{\frac{5-\xi}{N+1}}
    \Big](s,t)\notag\\
    &\lesssim \omega_{\epsilon}(s,t)+\omega_{\mu^\epsilon}(s,t)^{p/N},
\end{align}
where the last inequality follows from assuming, without loss of generality, that $\lambda>1$.
Now, we can estimate $\omega_{u^\epsilon}(s,t)$. Notice that, differently from \cite{debussche2023rough}, we study the $p$-variation norm in $H^{-\xi+1}$ instead of $H^{-\xi+2}$. This difference allows to treat $N=1$ and $N=2$ simultaneously. Let $\phi \in H^{\xi-1}$, then
\begin{align*}
       \frac{|u^\epsilon\st(\phi)|}{\|\phi\|_{H^{\xi-1}}}&\le \frac1{\|\phi\|_{H^{\xi-1}}}\sx[|u^\epsilon\st(J^\eta\phi)|+|u^\epsilon\st(\phi -J^\eta\phi)|\dx]\\
    &\lesssim \frac1{\|\phi\|_{H^{\xi-1}}}\sx[\|\phi\|_{H^{\xi-1}}\eta^{\xi-1}+|\sx(\mu^{\epsilon}\st+\A^{1,\epsilon}\st u^\epsilon_s+\uno_{\{N=2\}}\A^{2,\epsilon}\st u^\epsilon_s+u^{\epsilon,\natural}\st\dx)(J^\eta \phi)|\dx]\\
    &\lesssim \eta^{\xi-1}+\omega_{\mu^{\epsilon}}(s,t)+\omega_\epsilon(s,t)^{1/p}+\uno_{\{N=2\}}\omega_\epsilon(s,t)^{2/p}+\omega_{\epsilon,\natural}(s,t)^{\frac{N+1}{p}}\eta^{\xi-4}.
\end{align*}
Let $\eta=(\omega_\epsilon(s,t)^{1/p}+\omega_{\epsilon,\natural}(s,t)^{1/p})\lambda$. Then for all $(s,t)$ such that $\omega_{\epsilon}(s,t)^{1/p}+\omega_{\mu^\epsilon}(s,t)^{1/N}\le \frac{1}{4\lambda}$, it holds $\eta<1$ thanks to \eqref{stimaRemainder}.
Therefore, 
\begin{align*}
        \|u^{\epsilon}\st\|_{p\var,[s,t]}^p&\lesssim\omega_{\mu^{\epsilon}}(s,t)^p+\omega_\epsilon(s,t)+\omega_{\epsilon,\natural}(s,t)\\
    &\lesssim \omega_\epsilon(s,t)+\omega_{\mu^\epsilon}(s,t)^{p/N}
\end{align*}
for all $(s,t)$ such that $\omega_{\epsilon}(s,t)^{1/p}+\omega_{\mu^\epsilon}(s,t)^{1/N}\le \frac{1}{4\lambda}$, where the last inequality is due to \eqref{stimaRemainder}. Then, \cite[Proposition 2.5]{roveri2024well} implies that
\begin{equation}
    \|u^{\epsilon}\|_{p\var,[0,T]}^p\lesssim (\omega_{\epsilon}(0,T)+\omega_{\mu^\epsilon}(0,T)^{p/N})^p.
\end{equation}
Therefore we can estimate for $\kappa>0$ sufficiently small and uniformly in $\epsilon$

\begin{align*}
&\mathbb{E}\left[\log\left(1+\norm{u^{\epsilon}}^\kappa_{p-var,[0,T],H^{-\xi+1}}\right)\right]\\  & \leq \mathbb{E}\left[\log\left(1+\omega_\epsilon(0,T)^k\right)\right]+\mathbb{E}\left[\log\left(1+\omega_{\mu^\epsilon}(0,T)^{\frac{kp}{N}}\right)\right]< \infty,
\end{align*}
thanks to \eqref{est:UniformRegularityURDH>1/2}, \eqref{est:UniformRegularityURDH<1/2},\eqref{est:UniformRegularityURDH<1/2zeros} and \eqref{stimalog1}.
\section{Compactness and passage to the limit}\label{sec:end_proof}
This last section will be focused on the proof of \autoref{main_thm} and \autoref{trivial_noise}. 
In particular, we will first focus on the two convergence results in the case $\H<\frac12$. Later on, we will conclude the proof of \autoref{main_thm} by taking into account the regime $\H>\frac12.$
\subsection*{Case $\frac{1}{3}<\H<\frac{1}{2}$}
The apriori estimates provided by \autoref{lemma_a_priori} combined with \autoref{prop_rough_pathless12}, \autoref{cor:beta12} and the compactness criteria \cite[Lemma A.2]{hofmanova2019navier} imply that, both in the case of \autoref{main_thm} and \autoref{trivial_noise}, the family 
\begin{align*}
    (u^{\epsilon},r^{\epsilon}, W^\H,\mathbb{Y}^{\epsilon,1},\mathbb{Y}^{\epsilon,2})
\end{align*}
is tight in the space
\begin{align*}
    \mathcal{X}&=\mathcal{X}_{u,r,W}\times\mathcal{X}_{RP},\\
    \mathcal{X}_{u,r,W}&=(C_w([0,T];H)\cap L^2(0,T;H))\times L^2((0,T)\times \T^3)_w\times C([0,T];H),\\
    \mathcal{X}_{RP}&= C^{p\var}([0,T];H^{\xi})\times C^{p/2\var}_2([0,T];H^{\xi}),
\end{align*}

for $p\in (\frac1\H,3)$.Therefore, by applying the Jakubowski–Skorokhod representation theorem (see, for instance, \cite[Section 2.7]{breit2018stochastically}), we can extract a subsequence of $\epsilon_k \to 0$, which we still denote by $\epsilon$ for simplicity, and construct an auxiliary probability space $\left(\overline{\Omega},\overline{\mathcal{F}},\overline{\mathbb{P}}\right)$ carrying $\mathcal{X}$-valued random variables $\sx(\overline{u}^{\epsilon},\overline{r}^{\epsilon},\overline{W}^{\H,\epsilon},\overline{\mathbb{Y}}^{\epsilon,1},\overline{\mathbb{Y}}^{\epsilon,2}\dx)$ and $\sx(\overline{u},\overline{r},\overline{W}^{\H},\overline{\mathbb{Y}}^{1},\overline{\mathbb{Y}}^{2}\dx)$ such that
\begin{align*}
  \sx(\overline{u}^{\epsilon},\overline{r}^{\epsilon},\overline{W}^{\H,\epsilon},\overline{\mathbb{Y}}^{\epsilon,1},\overline{\mathbb{Y}}^{\epsilon,2}\dx) \stackrel{\mathcal{X}}{\longrightarrow} \sx(\overline{u},\overline{r},\overline{W}^{\H},\overline{\mathbb{Y}}^{1},\overline{\mathbb{Y}}^{2}\dx) \quad \overline{\mathbb{P}}\text{-a.s.}
\end{align*}
Moreover $\sx(\overline{u}^{\epsilon},\overline{r}^{\epsilon}, \overline{\mathbb{Y}}^{\epsilon,1},\overline{\mathbb{Y}}^{\epsilon,2}\dx)$ satisfies the forth item of the rough path formulation introduced in \autoref{def_rough_sol} for a new reminder $\overline{u}^{\epsilon,\sharp}$ defined by
\begin{equation*}
    \overline{u}^{\natural,\epsilon}\st:=\overline{u}^\epsilon\st-\int_s^t \Delta \overline{u}^\epsilon_\theta-b(\overline{u}_\theta^\epsilon+\overline{r}^\epsilon_\theta,\overline{u}^\epsilon_\theta)\,d\theta-\sum_{k=1}^N \overline{\mathbb{A}}^{k,\epsilon}\st \overline{u}_s^{\epsilon},
\end{equation*}
where $\overline{A}^{k,\epsilon}\st\phi=b^{(k)}\sx(\overline{\mathbb{Y}}^{k,\epsilon}\st,\phi\dx).$
For a detailed argument identifying the rough path $\sx(\overline{\mathbb{Y}}^{\epsilon,1},\overline{\mathbb{Y}}^{\epsilon,2}\dx)$
we refer to Step 2 in the proof \cite[Proposition 15]{flandoli2022global}. 

To finish the convergence proof, we must determine $\overline{r}=\lim_{\epsilon\to 0}r^{\epsilon}$ and verify that $\lim_{\epsilon\to 0}\overline{u}^{\natural,\epsilon}=:\overline{u}^{\sharp}\in C^{p/3\var}_{2,loc}(H^{-3})$. Indeed, all the other terms appearing in the rough formulation, given in \autoref{def_rough_sol}, will converge by standard arguments. In particular, \autoref{prop_rough_pathless12} and \autoref{cor:beta12} imply the convergence of the rough paths and, therefore, of the terms  $\overline{\A}^{k,\epsilon}\st \overline{u}^\epsilon_s.$

Before identifying $\overline{r}$, let us stress that due to the convergence of $\overline{r}^{\epsilon}$, for $\overline{\mathbb{P}}-a.s.$ $\omega\in \overline{\Omega}$ there exists a constant $N=N(\omega)$ such that
\begin{align}\label{pathwise_estimate}
    \int_0^T \norm{\overline{r}^{\epsilon}_{\tau}}^2 d\tau \leq N(\omega).
\end{align}
\begin{lemma}\label{identification_lemma}
\begin{itemize}
    \item Let $\alpha=\frac{1}{2}+\H$. Then, it holds
    \begin{align*}
        \overline{r}=\int (-C)^{-1}b(r,r)d\mu(r),
    \end{align*}
    where $\mu$ is the centered Gaussian measure on $L^2$ with covariance operator 
    \begin{align*}
        \overline{Q}=\sum_{i\geq 1} \frac{\sigma_i^2 e_i\otimes e_i}{2|\lambda_i|^{2\H}}\int_0^{+\infty}e^{-s} s^{2\H} ds. 
    \end{align*}
    \item Let $\alpha=1$ and let $\overline{W}^{\H,\epsilon}$ satisfy the assumptions of \autoref{trivial_noise}. Then, it holds 
    \begin{equation*}
        \overline{r}=0.
    \end{equation*}
\end{itemize}
\end{lemma}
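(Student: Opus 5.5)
The plan is to identify $\overline{r}$ by testing the equation for $\overline{r}^{\epsilon}$ against smooth divergence-free test functions $\phi$, integrated in time against a smooth $\psi\in C^\infty_c((0,T))$. Multiplying the $r^\epsilon$-equation by $\epsilon$ isolates the dominant term $C r^\epsilon$. For $\alpha=\frac12+\H$ we have $\epsilon^{-\alpha+\H}=\epsilon^{-1/2}$, and after multiplication by $\epsilon$ we obtain, weakly,
\begin{align*}
\int_0^T\psi(t)\langle r^{\epsilon}_t,C^*\phi\rangle dt
&=-\epsilon\int_0^T\psi'(t)\langle r^{\epsilon}_t,\phi\rangle dt-\epsilon\int_0^T\psi\langle \epsilon^{-1/2}w^{\epsilon}+r^{\epsilon},\Delta\phi\rangle dt\\
&\quad+\epsilon\int_0^T\psi\langle b(u^{\epsilon}+\epsilon^{-1/2}w^{\epsilon}+r^{\epsilon},\epsilon^{-1/2}w^{\epsilon}+r^{\epsilon}),\phi\rangle dt .
\end{align*}
Expanding the nonlinearity, the only term of order one is $\int_0^T\psi\langle b(w^\epsilon_t,w^\epsilon_t),\phi\rangle dt$. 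All the others carry a positive power of $\epsilon$ once we use \eqref{EnergyEst}, the pathwise bound \eqref{pathwise_estimate} on $\int\|\overline r^\epsilon\|^2$, and \autoref{hp_stoch_conv}. For example, $\epsilon\langle b(r^\epsilon,r^\epsilon),\phi\rangle$ is bounded by $\epsilon\|\nabla\phi\|_{L^\infty}\|r^\epsilon\|^2$, terms like $\epsilon^{1/2}\langle b(u^\epsilon,w^\epsilon),\phi\rangle$ vanish in $L^1(\Omega)$, and the cross terms $\epsilon^{1/2}b(w^\epsilon,r^\epsilon)$ are controlled by Cauchy--Schwarz. These estimates hold in probability on the Skorokhod space, since the laws coincide and every remainder is a function of the tuple. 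Passing to the limit on the left-hand side is immediate by the weak $L^2$ convergence of $\overline r^\epsilon$.

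The key step, and the main obstacle, is to show that $\int_0^T\psi(t)b(w^\epsilon_t,w^\epsilon_t)\,dt\to\int_0^T\psi(t)\,dt\int b(r,r)\,d\mu(r)$ in $L^2(\Omega;H^{-\xi})$. By self-similarity, $w^\epsilon_{t}\overset{d}{=}z_{t/\epsilon}$, where $z$ solves $dz=Cz\,dt+dW^\H$ with $z_0=0$, and this equality holds as processes. The ergodic average therefore becomes
\[
\int_0^T\psi(t)b(z_{t/\epsilon},z_{t/\epsilon})\,dt=\epsilon\int_0^{T/\epsilon}\psi(\epsilon s)b(z_s,z_s)\,ds .
\]
We first take $\psi=\mathbf 1_{[a,b]}$; since $\psi$ is then a difference of two indicators $\mathbf 1_{[0,t]}$, \autoref{lem_ito_stokes}, applied with $(-C)^{-1}$ removed or tested against $(-C)^{-1}\phi$, gives $L^2(\Omega)$ convergence. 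Because all constants in \autoref{lem_ito_stokes} are summable in the mode index, general smooth $\psi$ follows by Riemann-sum approximation, using the uniform bound $\sup_t\mathbb E\|w^\epsilon_t\|_{H^\xi}^4\lesssim 1$. The subtle point is transferring this convergence in law to the Skorokhod copies $\overline w^\epsilon$. Since the limit is deterministic, convergence in probability of the quadratic functional of $\overline W^{\H,\epsilon}$ follows from equality of laws. The Skorokhod tuple contains $\overline W^{\H,\epsilon}$, and $w^\epsilon$ is a measurable function of it.

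Putting these together yields $\langle \overline r,C^*\phi\rangle=-\langle\int b(r,r)\,d\mu(r),\phi\rangle$, where the sign follows from the sign convention of $b$ in \eqref{system_prelimit}. Since $0\in\rho(C)$, this identifies $\overline r=(-C)^{-1}\int b\,d\mu$, and the constant in time follows because $\psi$ was arbitrary; here the form of $\overline{Q}$ is exactly the limiting variance \eqref{limit_variance} with $\lambda=|\lambda_i|$. For the second item, where $\alpha=1$ and $\epsilon^{-\alpha+\H}=\epsilon^{\H-1}$, the quadratic term becomes $\epsilon^{2\H-1}b(w^\epsilon,w^\epsilon)$. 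This term vanishes identically because $f\cdot\nabla f=0$ gives $b(w^\epsilon,w^\epsilon)\equiv0$. The remaining terms, including $\epsilon^{\H}\Delta w^\epsilon$, $\epsilon^{\H}b(u^\epsilon+r^\epsilon,w^\epsilon)$ and $\epsilon^{\H}b(w^\epsilon,u^\epsilon+r^\epsilon)$, carry positive powers of $\epsilon$, so $\langle\overline r,C^*\phi\rangle=0$ and hence $\overline r=0$.
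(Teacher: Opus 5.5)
Your proposal is correct and follows essentially the paper's route: you isolate $b(w^\epsilon,w^\epsilon)$ in the $\epsilon$-multiplied, time-tested $r^\epsilon$-equation, kill every other term by its positive power of $\epsilon$, and identify the ergodic average via the time rescaling $w^\epsilon_{\epsilon t}$ and \autoref{lem_ito_stokes}; the paper's version differs only cosmetically (it applies the bounded operator $(-C)^{-1}$ directly, integrates by parts in $t$ with dominated convergence instead of approximating $\psi$ by step functions, and uses a $\log$-Chebyshev bound for the $\epsilon^{1/2}b(w^\epsilon,r^\epsilon)$ terms, where your pathwise Cauchy--Schwarz bound also works). The one slip is the sign: with the $-b$ convention of \eqref{system_prelimit}, your own display yields $C\overline r=\int b(r,r)\,d\mu(r)$, while the stated $\overline r=(-C)^{-1}\int b(r,r)\,d\mu(r)$ corresponds to the $+b$ convention of \autoref{sec_rough_formulation} used in the paper's proof---an inconsistency of conventions in the paper, not a gap in your argument.
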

\begin{proof}
\textit{Case $\alpha=\frac12+\H$.}\\
    Let us call by $\overline{w}^{\epsilon}$ the solution of 
    \begin{align*}
d\overline{w}^{\epsilon}=\epsilon^{-1}C\overline{w}^{\epsilon}\,dt+\epsilon^{- \H}d\overline{W}^{\H,\epsilon}_t
    \end{align*}
    and just for the matter of notation 
    \begin{align*}
        \tilde{r}=\int (-C)^{-1}b(r,r)d\mu(r).
    \end{align*}
    Due to \autoref{def_rough_sol}, for each $\psi\in C^{1}_c(0,T)$ it holds 
    \begin{align*}
        \int_0^T \psi_t \overline{r}^{\epsilon}_t dt&=\int_0^T \psi_t(-C)^{-1}b(\overline{w}^{\epsilon}_t,\overline{w}^{\epsilon}_t)dt\\ & +\epsilon^{1/2}\int_0^T\psi_t (-C)^{-1}\left(A\overline{w}^{\epsilon}_t+b(\overline{u}^{\epsilon}_t,\overline{w}^{\epsilon}_t)+b(\overline{w}^{\epsilon}_t,\overline{r}^{\epsilon}_t)+b(\overline{r}^{\epsilon}_t,\overline{w}^{\epsilon}_t)\right)dt\\ & +\epsilon\int_0^T\psi_t(-C)^{-1}\left(A\overline{r}^{\epsilon}_t+b(\overline{u}^{\epsilon}_t,\overline{r}^{\epsilon}_t)+b(\overline{r}^{\epsilon}_t,\overline{r}^{\epsilon}_t)\right)dt+\epsilon\int_0^T \partial_t \psi_t (-C)^{-1}\overline{r}^{\epsilon}_tdt\\ & =I_1^{\epsilon}+I_2^{\epsilon}+I_3^{\epsilon}+I_4^{\epsilon}.
    \end{align*}
    We will show that, up to subsequences, $I_1^{\epsilon}\rightarrow \int_0^T \psi \tilde{r} dt\quad \overline{\mathbb{P}}-a.s$ in $H^{-\xi}$ and all the other terms approach $0$ in $H^{-\xi}$, $\overline{\mathbb{P}}-a.s.$ This is enough to conclude by uniqueness of the limit. Due to \eqref{pathwise_estimate} and \eqref{EstOnR} we get immediately
    \begin{align*}
        \norm{I^{\epsilon}_3}_{H^{-\xi}}& \lesssim \epsilon\int_0^T \norm{\overline{r}^{\epsilon}_t}+\norm{\overline{r}^{\epsilon}_t}^2 dt\rightarrow 0\quad \overline{\mathbb{P}}-a.s.\\
        \norm{I^{\epsilon}_4}_{H^{-\xi}}& \lesssim\epsilon \int_0^T \norm{\overline{r}^{\epsilon}_t}dt \rightarrow 0\quad \overline{\mathbb{P}}-a.s.
    \end{align*}
    Concerning $I^{\epsilon}_2$, we split it in \begin{align*}
        I_{2,1}^{\epsilon}&=\epsilon^{1/2}\int_0^T\psi_t (-C)^{-1}\left(A\overline{w}^{\epsilon}_t+b(\overline{u}^{\epsilon}_t,\overline{w}^{\epsilon}_t)\right)dt,\\ I_{2,2}^{\epsilon}&=\epsilon^{1/2}\int_0^T\psi_t (-C)^{-1}\left(b(\overline{w}^{\epsilon}_t,\overline{r}^{\epsilon}_t)+b(\overline{r}^{\epsilon}_t,\overline{w}^{\epsilon}_t)\right)dt.
    \end{align*}
    Concerning the first one, due to \eqref{energy_estimate} and \autoref{hp_stoch_conv}
    we have
    \begin{align*}
        \mathbb{E}\left[\norm{I_{2,1}^{\epsilon}}_{H^{-\xi}}\right]& \lesssim \epsilon^{1/2}\sup_{t\in [0,T]}\mathbb{E}\left[\norm{\overline{w}^{\epsilon}_t}\right]\rightarrow 0.
    \end{align*}
    Concerning the second one, there exists $C>0$ such that for each  $\delta,\epsilon>0$ 
    \begin{align*}
        \overline{\mathbb{P}}\left(\norm{I^{\epsilon}_{2,2}}_{H^{-\xi}}>\delta\right)& \leq 2\overline{\mathbb{P}}\left(\int_0^T \norm{\overline{w}^{\epsilon}_t}\norm{\overline{r}^{\epsilon}_t}dt\gtrsim\frac{\delta}{\epsilon^{1/2}}\right) \\ & \leq \frac{2}{\log\left(1+\frac{C\delta}{\epsilon^{1/2}}\right)}\mathbb{E}\left[\log\left(1+\int_0^T\norm{\overline{w}^{\epsilon}_t}\norm{\overline{r}^{\epsilon}_t}dt\right)\right]\\ & \lesssim \frac{1+\sup_{t\in [0,T]}\mathbb{E}\left[\norm{\overline{w}^{\epsilon}_t}^2\right]+\mathbb{E}\left[\log\left(1+\int_0^T \norm{\overline{r}^{\epsilon}_t}^2\,dt\right)\right]}{\log\left(1+\frac{C\delta}{\epsilon^{1/2}}\right)}.
    \end{align*}
    Therefore $I^{\epsilon}_{2,2}\rightarrow 0$ in probability in $H^{-\xi}.$ Lastly let us observe that $\overline{w}^{\epsilon}_{\epsilon t}=\tilde{w}^{\epsilon}_t$ for $t\in [0,\epsilon^{-1}T]$ where $\tilde{w}^{\epsilon}_t$ solves
    \begin{align*}
        d\tilde{w}^{\epsilon}=-C\tilde{w}^{\epsilon}_tdt+d\overline{W}^{\H,\epsilon}_t.
    \end{align*}
    Therefore it holds
    \begin{align*}
        I^{\epsilon}_1=-\epsilon\int_0^T \partial_t \psi \int_0^{\epsilon^{-1}t}(-C)^{-1}b(\tilde{w}^{\epsilon}_s,\tilde{w}^{\epsilon}_s)ds.
    \end{align*}
   Due to \autoref{lem_ito_stokes}, for each $t\in [0,T]$ it holds
    \begin{align*}
    \epsilon \int_0^{\epsilon^{-1}t}(-C)^{-1}b(\tilde{w}^{\epsilon}_s,\tilde{w}^{\epsilon}_s)ds\rightarrow t \tilde{r} \quad \text{in }L^2(\Omega;H).   
    \end{align*}
    Since in addition 
    \begin{align*}
       \norm{\epsilon \int_0^{\epsilon^{-1}t}(-C)^{-1}b(\tilde{w}^{\epsilon}_s,\tilde{w}^{\epsilon}_s)ds}_{L^2(\Omega;H)} & \leq \mathbb{E}\left[\int_0^T \norm{\overline{w}^{\epsilon}_s}_{H^{\xi}}^2ds\right]\lesssim 1
    \end{align*} 
    uniformly in $\epsilon$ due to \autoref{hp_stoch_conv}, by dominated convergence theorem
    $I_1^{\epsilon}\rightarrow \int_0^T \psi_t \tilde{r}_t dt$ in $L^2(\Omega;H)$ and the claim follows.
    
    \textit{Case $\alpha=1$.}\\
     As in the previous case, for each $\psi\in C^{1}_c(0,T)$ it holds due to \autoref{def_rough_sol}
    \begin{align*}
        \int_0^T \psi_t \overline{r}^{\epsilon}_t dt&=\epsilon^{2\H-1}\int_0^T \psi(-C)^{-1}b(\overline{w}^{\epsilon}_t,\overline{w}^{\epsilon}_t)dt\\ & +\epsilon^{\H}\int_0^T\psi_t (-C)^{-1}\left(A\overline{w}^{\epsilon}_t+b(\overline{u}^{\epsilon}_t,\overline{w}^{\epsilon}_t)+b(\overline{w}^{\epsilon}_t,\overline{r}^{\epsilon}_t)+b(\overline{r}^{\epsilon}_t,\overline{w}^{\epsilon}_t)\right)dt\\ & +\epsilon\int_0^T\psi_t(-C)^{-1}\left(A\overline{r}^{\epsilon}_t+b(\overline{u}^{\epsilon}_t,\overline{r}^{\epsilon}_t)+b(\overline{r}^{\epsilon}_t,\overline{r}^{\epsilon}_t)\right)dt+\epsilon\int_0^T \partial_t \psi_t (-C)^{-1}\overline{r}^{\epsilon}_t dt\\ & =J_1^{\epsilon}+J_2^{\epsilon}+J_3^{\epsilon}+J_4^{\epsilon}.
    \end{align*}   
     The assumptions on $Q$ of \autoref{trivial_noise}, imply that $J^{\epsilon}_1\equiv 0$. The treatment of $J_2^\epsilon,\ J_3^\epsilon,\ J_4^\epsilon$ follows verbatim as in the previous case and we omit details. This completes the proof.
\end{proof}

Since we have been able to pass to the limit in all the terms except the remainder, also $\overline{u}^{\epsilon,\sharp}$ converge $\overline{\mathbb{P}}-a.s$ to some object $\overline{u}^{\sharp}\st$ such that, for $\alpha=\frac{1}{2}+\H$, 
\begin{align}\label{eqUtile}
    \overline{u}_{\st}=\int_s^t \Delta \overline{u}_\tau+b(\overline{u}_{\tau}+\overline{r}_\tau,\overline{u}_\tau)\,dr+\overline{u}^{\sharp}_{st} \quad \mathbb{\overline{P}}-a.s.,
\end{align}
while for $\alpha=1$,
\begin{align}
    \overline{u}_{\st}=\int_s^t \Delta \overline{u}_\tau+b(\overline{u}_{\tau},\overline{u}_\tau)\,dr+\sum_{k=1}^2 \mathbb{A}^{k}\st u_s+\overline{u}^{\sharp}_{st} \quad \mathbb{\overline{P}}-a.s..
\end{align} 
Let us conclude by proving that $\overline{u}^{\sharp}_{st}\in C^{p/3\var}_{2,loc}(H^{-3}).$ Since the rough paths converge and, in particular, \eqref{est:UniformRegularityURDH<1/2} and \eqref{est:UniformRegularityURDH<1/2zeros} hold, then we can identify a uniform in $\epsilon$ random covering $\{J_k\}_k$ of $[0,T]$ by introducing the localization \[\sup_{\epsilon\in (0,1)}\omega_\epsilon^{1/p}(s,t)\le\frac{1}{2\lambda}.\]
Therefore thanks to 
\eqref{stimaRemainder}, we have that on any $J_k$, for all $s,t \in J_k$ and $\phi \in H^3$, the following inequalities hold
  \begin{align}  &|\overline{u}^{\sharp}\st(\phi)|=\lim_{\epsilon\to 0}|\overline{u}^{\epsilon,\sharp}\st(\phi)|\le\|\phi\|_{H^3}\liminf_{\epsilon\to 0}{\|\overline{u}^{\epsilon,\sharp}\|_{p/3\var,[s,t]}},\\
  &\|\overline{u}^{\epsilon,\sharp}\|_{p/3\var,[s,t]}\lesssim\sx(\omega_{\epsilon}(s,t)+\omega_{\mu^\epsilon}(s,t)^{p/2}\dx)^{3/p}\lesssim  M(\omega),
  \end{align}
where $M(\omega)$ is a random constant that depends on \eqref{pathwise_estimate}.
Then for any $\mathbf{P}$ partition of $J_k$, we have that
\begin{equation}
    \sum_{(s,t)\in P}\|\overline{u}^{\sharp}\st\|_{H^{-3}}^{p/3}\lesssim \liminf_{\epsilon\to0}\sum_{(s,t)\in P}{\|\overline{u}^{\epsilon,\sharp}\st\|_{H^{-3}}^{p/3}}\lesssim \liminf_{\epsilon\to0} \|\overline{u}^{\epsilon,\sharp}\|_{p/3\var,J_k}^{p/3}< \infty.
\end{equation}
Therefore, we have shown that $\overline{u}^{\sharp}_{st}\in C^{p/3\var}_{2,loc}(H^{-3}).$ At last, we stress that in case of $\alpha=\frac12+\H$ we have that $\overline{u}^{\sharp}\equiv0$ since it is highly regular in time and it is given by the increments of a $1$-index function of time, see \eqref{eqUtile}.
\subsection*{Case $\H>\frac{1}{2}$} 
The convergence in this final case is obtained by closely mirroring the proof from the preceding case.
The apriori estimates provided by \autoref{lemma_a_priori} combined with \autoref{proproughHgreat12} and the compactness criteria \cite[Lemma A.2]{hofmanova2019navier} imply that, in the case of \autoref{main_thm}, the family 
\begin{align*}
    (u^{\epsilon},r^{\epsilon}, W^\H,\mathbb{Y}^{\epsilon,1})
\end{align*}
is tight in the space
\begin{align*}
    \mathcal{X}&=\mathcal{X}_{u,r,W}\times\mathcal{X}_{RP},\\
    \mathcal{X}_{u,r,W}&=(C_w([0,T];H)\cap L^2(0,T;H))\times L^2((0,T)\times \T^3)_w\times C([0,T];H),\\
    \mathcal{X}_{RP}&= C^{p\var}([0,T];H^{\xi}).
\end{align*}
 Therefore,  by Jakubowski–Skorokhod’s representation theorem, see for example \cite[Section 2.7]{breit2018stochastically}, there exists a subsequence of $\epsilon_k\rightarrow 0$ which for the matter of notation we continue to denote by $\epsilon$ and an auxiliary probability space $\left(\overline{\Omega},\overline{\mathcal{F}},\overline{\mathbb{P}}\right)$ with $\mathcal{X}$ valued random variables $\sx(\overline{u}^{\epsilon},\overline{r}^{\epsilon}, \overline{W}^{\H,\epsilon},\overline{\mathbb{Y}}^{\epsilon,1}\dx)$ and $\sx(\overline{u},\overline{r}, \overline{W}^{\H},\overline{\mathbb{Y}}^{1}\dx)$ such that 
\begin{align*}
  \sx(\overline{u}^{\epsilon},\overline{r}^{\epsilon}, \overline{W}^{\H,\epsilon},\overline{\mathbb{Y}}^{\epsilon,1}\dx)\stackrel{\mathcal{X}}{\rightarrow}\sx(\overline{u},\overline{r}, \overline{W}^{\H},\overline{\mathbb{Y}}^{1}\dx)\quad \overline{\mathbb{P}}-a.s..
\end{align*}
The convergence of all the terms as well as showing that the remainder $\overline{u}^{\sharp}\in C^{p/2\var}_{2,loc}(H^{-3})$ can be justified similarly to the previous case. Therefore, we need to identify $\overline{r}$ to conclude the proof of \autoref{main_thm}.
\begin{lemma}\label{identification_lemma_smoothcase}
    It holds
    \begin{align*}
        \overline{r}=0.
    \end{align*}
\end{lemma}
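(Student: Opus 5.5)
We follow the scheme of the proof of \autoref{identification_lemma}, now with $\alpha=1$ and $\H>\frac12$. Fix $\psi\in C^1_c(0,T)$ and test the equation for $\overline{r}^{\epsilon}$ from \autoref{def_rough_sol} against $\psi$. Multiplying by $\epsilon(-C)^{-1}$ and using $v^\epsilon=\epsilon^{-1+\H}w^\epsilon+r^\epsilon$ gives the decomposition
\begin{align*}
\int_0^T\psi_t\overline{r}^{\epsilon}_tdt&=\epsilon^{2\H-1}\int_0^T\psi_t(-C)^{-1}b(\overline{w}^{\epsilon}_t,\overline{w}^{\epsilon}_t)dt\\
&+\epsilon^{\H}\int_0^T\psi_t(-C)^{-1}\left(A\overline{w}^{\epsilon}_t+b(\overline{u}^{\epsilon}_t,\overline{w}^{\epsilon}_t)+b(\overline{w}^{\epsilon}_t,\overline{r}^{\epsilon}_t)+b(\overline{r}^{\epsilon}_t,\overline{w}^{\epsilon}_t)\right)dt\\
&+\epsilon\int_0^T\psi_t(-C)^{-1}\left(A\overline{r}^{\epsilon}_t+b(\overline{u}^{\epsilon}_t,\overline{r}^{\epsilon}_t)+b(\overline{r}^{\epsilon}_t,\overline{r}^{\epsilon}_t)\right)dt+\epsilon\int_0^T\partial_t\psi_t(-C)^{-1}\overline{r}^{\epsilon}_tdt\\
&=J_1^\epsilon+J_2^\epsilon+J_3^\epsilon+J_4^\epsilon.
\end{align*}
Here $\overline{w}^\epsilon$ is the fractional Ornstein--Uhlenbeck process driven by $\overline{W}^{\H,\epsilon}$. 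We will show that each $J_i^\epsilon\to0$ in $H^{-\xi}$ in probability. Since $\overline{r}^\epsilon\to\overline{r}$ weakly in $L^2$ almost surely, the left-hand side converges to $\int_0^T\psi_t\overline{r}_tdt$. We then conclude $\overline{r}=0$ by uniqueness of limits and the arbitrariness of $\psi$.

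The terms $J_3^\epsilon$ and $J_4^\epsilon$ are handled exactly as $I_3^\epsilon$ and $I_4^\epsilon$. By the pathwise bound \eqref{pathwise_estimate}, both are $O(\epsilon)$ almost surely in $H^{-\xi}$. Here we use $\xi>\frac d2+2$, so that $b(r,r)$ and $b(u,r)$ are controlled in $H^{-\xi}$ by $\norm{r}^2$ and $\norm{u}\norm{r}$. For $J_2^\epsilon$, the terms that are linear in $\overline{w}^\epsilon$ are controlled in expectation by $\epsilon^{\H}\sup_t\mathbb{E}\norm{\overline{w}^\epsilon_t}_{H^\xi}$, which is finite by \autoref{hp_stoch_conv}, together with \eqref{energy_estimate}. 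The mixed terms in $(\overline{w}^\epsilon,\overline{r}^\epsilon)$ are treated with the logarithmic Chebyshev argument used for $I_{2,2}^\epsilon$, which relies on \eqref{EstOnR}. This produces a bound of order $1/\log(1+C\delta\epsilon^{-\H})\to0$.

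The key term is $J_1^\epsilon$, and this is where the regime $\H>\frac12$ enters. In \autoref{identification_lemma} the prefactor was $\epsilon^0$ and an ergodic limit was needed. Here the prefactor is $\epsilon^{2\H-1}$, which vanishes. It therefore suffices to have a crude uniform bound. For $\xi>\frac d2+1$,
\begin{align*}
\mathbb{E}\norm{J_1^\epsilon}_{H^{-\xi}}\lesssim\epsilon^{2\H-1}\norm{\psi}_\infty\int_0^T\mathbb{E}\norm{\overline{w}^\epsilon_t}_{H^\xi}^2dt\lesssim\epsilon^{2\H-1},
\end{align*}
again by \autoref{hp_stoch_conv}. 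No analogue of \autoref{lem_ito_stokes} is needed for $\H>\frac12$; this is consistent with the absence of an It\^o--Stokes drift in \eqref{eq_Hgreat12}.

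The main point to check carefully is the mixed term in $J_2^\epsilon$. We only have logarithmic moments of $\int_0^T\norm{\overline{r}^\epsilon}^2$, so convergence holds only in probability, not in mean. We must also verify that \eqref{EstOnR} transfers to the Skorokhod copies; this follows from equality of laws. Convergence in probability yields an almost surely convergent subsequence, which is enough to identify the limit.
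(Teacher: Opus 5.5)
Your proposal is correct and follows essentially the same route as the paper. It uses the same decomposition into $J_1^\epsilon,\dots,J_4^\epsilon$, handles $J_2^\epsilon,J_3^\epsilon,J_4^\epsilon$ exactly as in the proof of \autoref{identification_lemma}, and disposes of $J_1^\epsilon$ by the crude bound $\epsilon^{2\H-1}\int_0^T\mathbb{E}\norm{\overline{w}^\epsilon_t}^2dt$ via \autoref{hp_stoch_conv}; your remarks on convergence in probability for the mixed term and on transferring \eqref{EstOnR} to the Skorokhod copies only spell out what the paper leaves implicit.
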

\begin{proof}
    Let us call by $\overline{w}^{\epsilon}$ the solution of 
    \begin{align*}
d\overline{w}^{\epsilon}=\epsilon^{-1}C\overline{w}^{\epsilon}\,dt+\epsilon^{- \H}d\overline{W}^{\H,\epsilon}_t.
    \end{align*}
    For each $\psi\in C^{1}_c(0,T)$ it holds due to \autoref{def_rough_sol}
    \begin{align*}
        \int_0^T \psi_t \overline{r}^{\epsilon}_t dt&=\epsilon^{2\H-1}\int_0^T \psi_t(-C)^{-1}b(\overline{w}^{\epsilon}_t,\overline{w}^{\epsilon}_t)dt\\ & +\epsilon^{\H}\int_0^T\psi_t (-C)^{-1}\left(A\overline{w}^{\epsilon}_t+b(\overline{u}^{\epsilon}_t,\overline{w}^{\epsilon}_t)+b(\overline{w}^{\epsilon}_t,\overline{r}^{\epsilon}_t)+b(\overline{r}^{\epsilon}_t,\overline{w}^{\epsilon}_t)\right)dt\\ & +\epsilon\int_0^T\psi(-C)^{-1}\left(A\overline{r}^{\epsilon}_t+b(\overline{u}^{\epsilon}_t,\overline{r}^{\epsilon}_t)+b(\overline{r}^{\epsilon}_t,\overline{r}^{\epsilon}_t)\right)dt+\epsilon\int_0^T \partial_t \psi_t (-C)^{-1}\overline{r}^{\epsilon}_t dt\\ & =J_1^{\epsilon}+J_2^{\epsilon}+J_3^{\epsilon}+J_4^{\epsilon}.
    \end{align*}
    We will show that, up to subsequences, all the terms approach $0$ in $H^{-\xi}$ $\overline{\mathbb{P}}-a.s.$ . This is enough to conclude by uniqueness of the limit. The treatment of $J_2^\epsilon,\ J_3^\epsilon,\ J_4^\epsilon$ follows verbatim as in the first case of \autoref{identification_lemma} and we omit details. To treat $J_1^\epsilon$ we argue in a simpler way compared to the first case of \autoref{identification_lemma}. Indeed since $\H>\frac{1}{2}$, it holds
    \begin{align*}
        \mathbb{E}\left[\|I_1^{\epsilon}\|_{H^{-\xi}}\right] \lesssim \epsilon^{2\H-1}\int_0^T\mathbb{E}\left[\|\overline{w}^{\epsilon}_t\|^2 \right]dt\rightarrow 0
    \end{align*}
    by \autoref{hp_stoch_conv}. This completes the proof.
\end{proof}
\begin{remark}
    For the sake of simplicity, we have reported estimates showing that $\overline{u}^{\sharp}\in C^{p/2\var}_{2,loc}(H^{-3})$. Nevertheless, we stress that by refining \eqref{stimaExtraURD} we could have shown that 
    $\overline{u}^{\sharp}\in C^{p/2\var}_{2,loc}(H^{-2})$ for $H\in (1/2,2/3]$ and $\overline{u}^{\sharp}\in C^{2p/(2p+1)\var}_{2,loc}(H^{-2})$ for $\H>2/3$. Therefore for all $\H\in (1/2,1)$, the integral term arising from the transport noise in the limiting equation can be constructed as a Young integral in $H^{-2}$ instead of $H^{-3}.$ 
\end{remark}
\begin{acknowledgements}
EL has received funding from the European Research Council (ERC) under the European Union’s Horizon 2020 research and innovation programme (grant agreement No. 949981). FT is supported by the Istituto Nazionale di Alta Matematica (IN$\delta$AM) through the GNAMPA 2025 project “Modelli stocastici in Fluidodinamica e Turbolenza” (CUP E5324001950001), and by the project
"Noise in fluid dynamics and related models" funded by the MUR Progetti di Ricerca
di Rilevante Interesse Nazionale (PRIN) Bando 2022 - grant 20222YRYSP. FT also acknowledges the warm hospitality of Universität Bielefeld, where this work started. Both authors thank Martina Hofmanová for valuable discussions at an early stage of the project.
    
\end{acknowledgements}
\appendix
\section{Multiscale Model}\label{sec:appendix_multiscale}
Here we briefly recall, and adapt to our fractional framework, the arguments of \cite[Section 2]{flandoli20212d}.

We consider systems with three time scales. This statement should be understood as follows. There is a small time scale $\mathcal{T}_s$ at which one observes variations and fluctuations of the velocity fields. There is an intermediate time scale $\mathcal{T}_m$ at which these fluctuations appear random, though not yet as white noise; rather, they behave as random processes with a typical time of variation of order $\mathcal{T}_s$, which is small compared to $\mathcal{T}_m$. Finally, there is a large time scale $\mathcal{T}_l$ at which the cumulative effect of the small-scale fluctuations becomes non-negligible. We assume that these time scales satisfy the relation
\begin{align}\label{scaling_condition}
\frac{\mathcal{T}_s}{\mathcal{T}_m}=\frac{\mathcal{T}_m}{\mathcal{T}_l}.
\end{align}

As discussed in \cite{flandoli20212d}, an ideal example of this situation is provided by atmospheric flows over a large region, restricted to the lower atmospheric layer interacting with ground irregularities such as hills and mountains. We idealize such a fluid by means of the three-dimensional Euler equations with forcing, written in velocity form as
\begin{align}\label{eq:NS_1}
\begin{cases}
\partial_t u+u\cdot\nabla u+\nabla p=f,\\
\nabla\cdot u=0,
\end{cases}
\end{align}
where $f$ represents the generation of small-scale perturbations induced by ground irregularities, as discussed for instance in \cite[Chapter 5]{flandoli2023stochastic}. For long-time investigations, it would of course be necessary to include additional realistic terms, such as a small viscous dissipation $\nu\Delta u$ with $\nu\ll1$, in order to dissipate the energy injected by $f$, as in the model \eqref{e:equation1}. However, such refinements are not essential here, since our focus is solely on the scaling of the noise coefficients.

\subsection{Human Scale: Seconds} The human scale corresponds to observations made at distances measured in meters and over time intervals of seconds. Accordingly, we set $\mathcal{T}_s=1,\mathrm{s}$. The velocity field $u(t,x)$ is measured in meters per second. We decompose the initial condition into large- and small-scale components according to some reasonable criterion,
\begin{align*}
    u(0)=u_l(0)+u_s(0).
\end{align*}
The small-scale component describes wind fluctuations over spatial distances of $1$–$10$ m, while the large-scale component captures structures of regional extent, ranging from $10$ to $1000$ km. Motivated by \eqref{eq:NS_1}, we split the evolution of the velocity field into the dynamics of the large and small scales as
\begin{align}\label{eq:NS_2}
    \begin{cases}
        \partial_t u_l+(u_s+u_l)\cdot\nabla u_l+\nabla p_l&=0,\\
        \partial_t u_s+(u_s+u_l)\cdot\nabla u_s+\nabla p_s&=f,\\
        \nabla\cdot u_s=\nabla\cdot u_l=0,
    \end{cases}
\end{align}
reflecting the assumption that the forcing $f$ primarily generates small-scale structures. We stress that \eqref{eq:NS_2} is a modeling assumption and cannot be rigorously derived from \eqref{eq:NS_1}.
\subsection{Intermediate scale: Minutes} 
If the same atmospheric flow is observed by a recording device with a temporal resolution of minutes, that is, $\mathcal{T}_m=1,\mathrm{min}$, the small-scale fluctuations described above appear random, while retaining the same spatial scale of $1$–$10$ m. This motivates the modeling assumption that small scales can be described by a stochastic equation with memory, see also \cite{majda2001mathematical, franzke2015stochastic}. This leads to the system
\begin{align}\label{eq_NS_3}
    \begin{cases}
        \partial_t u_l+(u_s+u_l)\cdot\nabla u_l+\nabla p_l&=0,\\
        \partial_t u_s+(u_s+u_l)\cdot\nabla u_s+\nabla p_s&=-\frac{1}{\mathcal{T}_m}u_s+\frac{1}{\mathcal{T}_m^\H}\Dot{W}^{\H},\\
        \nabla\cdot u_s=\nabla\cdot u_l=0.
    \end{cases}
\end{align}
Neglecting the Navier–Stokes dynamics at small scales, the process
\begin{align*}
    \tilde{u}_s(t)=\frac{1}{\mathcal{T}_m^\H}\int_0^t e^{-\frac{(t-r)}{\mathcal{T}_m}}dW^{H}_r
\end{align*}
returns to equilibrium over a time of order $\mathcal{T}_m$, and its intensity is independent of $\mathcal{T}_m$.
\subsection{Regional scale: Hours} We now consider the same system, namely the lower atmospheric layer over a large region, observed at the regional scale, for instance by a satellite. Time is measured in hours, $\mathcal{T}_l=1,\mathrm{h}$, and spatial units range from $10$ to $1000$ km, in contrast to the meter-scale resolution relevant at the human and intermediate levels. These are typical scales for weather prediction.

We rescale \eqref{eq_NS_3} accordingly. Variables at the intermediate scale are denoted by $(t,x)$, while those at the regional scale are denoted by $(T,X)$. Since minutes and hours differ by a factor of $60$, we set
\begin{align*}
    \varepsilon^{-1}=60,\quad t=\varepsilon^{-1}T.
\end{align*}
Similarly, we assume that spatial variables are related by
\begin{align*}
    x=\varepsilon_x^{-1}X.
\end{align*}
If $u$ denotes the velocity at the intermediate scale and $U$ the corresponding velocity at the regional scale, then
\begin{align*}
    U(T,X)=\varepsilon_x\varepsilon^{-1}u\left(t\varepsilon^{-1},x\varepsilon^{-1}_x\right).
\end{align*}
In these new coordinates, system \eqref{eq_NS_3} becomes
\begin{align*}
     \begin{cases}
        \partial_T U_l+(U_s+U_l)\cdot\nabla_X U_l+\nabla_X P_l&=0,\\
        \partial_T U_s+(U_s+U_l)\cdot\nabla_X U_s+\nabla_X P_s&=-\frac{\varepsilon^{-1}}{\mathcal{T}_m}U_s+\frac{\varepsilon_x\varepsilon^{-2}}{\mathcal{T}_m^\H}\Dot{W}^{\H}\left(\frac{T}{\varepsilon},\frac{x}{\varepsilon_x}\right),\\
        \nabla\cdot U_s=\nabla\cdot U_l=0.
    \end{cases}
\end{align*}
The scaling condition \eqref{scaling_condition}, together with the scaling properties of fractional Brownian motion, implies $\mathcal{T}_m=\varepsilon$ and yields
\begin{align}\label{eq:NS_4}
     \begin{cases}
        \partial_T U_l+(U_s+U_l)\cdot\nabla_X U_l+\nabla_X P_l&=0,\\
        \partial_T U_s+(U_s+U_l)\cdot\nabla_X U_s+\nabla_X P_s&=-\frac{1}{\varepsilon^2}U_s+\frac{\varepsilon_x}{\varepsilon^{1+2\H}}\Dot{\tilde{W}}^{\H}\left(T,\frac{x}{\varepsilon_x}\right),\\
        \nabla\cdot U_s=\nabla\cdot U_l=0,
    \end{cases}    
\end{align}
for a new fractional Brownian motion $\tilde{W}^{\H}$. Up to the redefinition $\epsilon=\varepsilon^{2}$, system \eqref{eq:NS_4} exactly corresponds to \eqref{system_prelimit} with the choice $\alpha=\frac{1}{2}+\H$.
\bibliography{demo}{}
\bibliographystyle{plain}
\end{document}